\documentclass[11pt]{amsart}

\usepackage[usenames,dvipsnames]{color}
\usepackage{array}
\usepackage{indentfirst,pict2e}

\usepackage{amsfonts}
\usepackage[leqno]{amsmath}
\usepackage{amsthm}
\usepackage{latexsym}

\usepackage[
%	debug,
	hyperindex,
	pagebackref,
	pdftex,
	%pdftitle={\docpdftitle},
	%pdfauthor={},
	pdfdisplaydoctitle,
	pdfpagemode=UseNone,
	breaklinks=true,
	extension=pdf,
	bookmarks=false,
	plainpages=false,
	colorlinks,
	linkcolor=linkblue,
	citecolor=linkblue,
	urlcolor=linkblue,
	pdfmenubar=true,
	pdftoolbar=true,
	pdfpagelabels,
	pdfpagelayout=SinglePage,
	pdfview=Fit,
	pdfstartview=Fit
]{hyperref}

\definecolor{linkblue}{rgb}{0,0.2,0.6}

\newcommand{\neturl}[1]{\href{#1}{{\sffamily{\texttt{#1}}}}}
\newcommand{\neturltilde}[2]{\href{#1}{{\sffamily{\texttt{#2}}}}}
\newcommand{\netarxiv}[1]{\href{http:arxiv.org/abs/#1}{{\sffamily{\texttt{arXiv:#1}}}}}
\usepackage[backrefs]{amsrefs}

\numberwithin{equation}{section}
\newtheorem{theorem}[equation]{Theorem}
\newtheorem{theorem*}{Theorem}
\newtheorem{problem*}{Problem}
\newtheorem{definition}[equation]{Definition}
\newtheorem{lemma}[equation]{Lemma}

\newtheorem{conjecture}[equation]{Conjecture}
\newtheorem{proposition}[equation]{Proposition}
\newtheorem{problem}[equation]{Problem}

\newcommand{\R}{\mathbb{R}}

\newcommand{\N}{\mathbb{N}}
\newcommand{\Q}{\mathbb{Q}}

\newcommand{\Pro}{\mathbb{P}}

\newcommand{\dblq}{/\!\!/}

\newcommand{\M}{\overline{M}} 

\newcommand{\rayl }{[}
\newcommand{\rayr}{]}

\newcommand{\Nef}{\operatorname{Nef}}
\newcommand{\Pic}{\operatorname{Pic}}

\newcommand{\sL}{\mathfrak{sl}}
\newcommand{\Sym}{\operatorname{Sym}}
\newcommand{\SL}{\operatorname{SL}}

\setlength{\oddsidemargin}{0in}     
\setlength{\evensidemargin}{0in}    
\setlength{\topmargin}{-.5in}       
\setlength{\footskip}{.5in}
\setlength{\textwidth}{6.5in}
\setlength{\textheight}{9in}

\begin{document}

\pagenumbering{arabic}

\title{$\sL_2$ conformal block divisors and the nef cone of $\overline{M}_{0,n}$}

\author{David Swinarski}
\address{Department of Mathematics\\ University of Georgia\\ Athens, GA 30602}
\email{davids@math.uga.edu}

%\subjclass[2000]{Primary 14L24, 14H10 \\Secondary 14D22, 13P10}
%\keywords{Hilbert stability, state polytope}

\date{\today}

\begin{abstract}
We show that $\sL_2$ conformal blocks do not cover the nef cone of
$\M_{0,6}$, or the $S_9$-invariant nef cone of $\M_{0,9}$.  A key
point is to relate the nonvanishing of intersection numbers between these
divisors and F-curves to the nonemptiness of some explicitly defined
polytopes.  Several experimental results and some open problems are also included.
\end{abstract}
\maketitle

\section{Introduction}

Vector bundles of conformal blocks on the moduli stack  of stable $n$-pointed genus $g$ curves $\overline{\mathcal{M}}_{g,n}$ were constructed in the 1980s by Tsuchiya, Ueno, and Yamada.  These vector bundles depend on three ingredients: a simple Lie algebra $\mathfrak{g}$, a nonnegative integer $\ell$ called the level, and an $n$-tuple of dominant integral weights $\vec{\lambda}$ in the Weyl alcove of level $\ell$ for $\mathfrak{g}$.  Fakhruddin's recent preprint \cite{Fakh} contains formulas for the Chern classes of these vector bundles and formulas for the intersection numbers of their determinant line bundles with certain curves (F-curves) in the moduli space.  This allows us to compute many new examples of conformal blocks for the first time.  Fakhruddin also shows that on $\M_{0,n}$ these vector bundles are globally generated, and hence their determinant line bundles, which we denote $D(\mathfrak{g},\ell,\vec{\lambda})$, are nef.  

A natural question to ask is whether every nef divisor on $\M_{0,n}$ arises this way.   For $n=4$ and $n=5$, one quickly finds that this is true (see Section \ref{exp section} below), but already for $n=6$, it is not so easy to find conformal block divisors covering $\Nef(\M_{0,n})$.  We will show in Section \ref{dont cover 6 section} that conformal blocks for $\mathfrak{g}=\sL_2$ do not cover $\Nef(\M_{0,6})$.  Unfortunately, this is not a question which can be settled with an exhaustive computational search; there are infinitely many Lie algebras and infinitely many levels to check.  

However, all the $S_6$-symmetric divisors on $\M_{0,6}$ are conformal blocks, and this suggests a second question: Does every $S_n$-symmetric nef divisor on $\M_{0,n}$ arise from conformal blocks?  Once again, one quickly finds that this is true for $n=4,5,6,7,8$ (see Section \ref{symm exp section} below).  However, for $n=9$, we have only found conformal blocks covering half the cone.

We elaborate on this now.  For $n=9$, the vector space of
$S_9$-symmetric divisors $\Pic(\M_{0,9})^{S_9} \otimes \R$ is 3-dimensional.  $\Nef(\M_{0,9}) \cap \Pic(\M_{0,9})^{S_9} $ is a polyhedral cone with 4 facets meeting along 4 extremal rays.  If we take a cross section of this cone, we obtain a planar quadrilateral as shown below:

\begin{figure}[h]
\setlength{\unitlength}{.35in}
\begin{picture}(3,5)(-2,-2)
%quadrilateral vertices
%\put(1,0){ \circle*{.02}}
%\put(.447458,2.693){ \circle*{.02}}
%\put(-4.25806,0){ \circle*{.02}}
%\put(-.253846,-1.52776){\circle*{.02}}
%label quadrilateral vertices
\put(1.15,0){ \tiny $B_2+B_3+2B_4$}
\put(-0.5,2.8){ \tiny $B_2+3B_3+6B_4$}
\put(-6.7,0){ \tiny $B_2+3B_3+2B_4 $}
\put(-1.2,-2.0){ \tiny $3B_2+3B_3+4B_4$}
%\put(-1,-2){\small $\sL_{2}$, level 7 }
%kappa
%\put(-1.17333,0){ {\color{Gray}\tiny $\kappa$}}
%quadrilateral edges and labels
%l_6111
\put(-4.26,0){\line(1,0.57){4.71}}
\put(-4.5,1.3){ \tiny $D \cdot F_{6,1,1,1} = 0$}
%l_3222
\put(-4.26,0){\line(1,-0.38){4.0}}
\put(-4.9,-0.8){ \tiny $D \cdot F_{3,2,2,2} = 0$}
%l_5211
\put(0.45,2.7){\line(1,-4.87){0.55}}
\put(0.7,1.5){ \tiny $D \cdot F_{5,2,1,1} = 0$}
%l_4221
\put(-0.25,-1.53){\line(1,1.22){1.25}}
\put(0.5,-0.75){ \tiny $D \cdot F_{4,2,2,1} =0$}
%quadrilateral diagonals
%\put(-4.25806,0){{\color{Gray} \line(1,0){5.25806}}}
%\put(-0.253846,-1.52776){{\color{Gray} \line(1,6.01844){0.7013}}}
\end{picture}
\end{figure}

The leftmost three of these extremal rays are spanned by conformal block divisors.  
Unfortunately, as above, it is not possible to determine by a finite computer
search that the rightmost ray spanned by $B_2+B_3+2B_4$ is not a symmetrized
conformal block divisor, since there are infinitely many Lie algebras and
infinitely many levels that could be used.

More generally, a major open question is: 

\begin{problem*}
Given a nef divisor $D$ on $\M_{0,n}$, find a triple $(\mathfrak{g},\ell,\vec{\lambda})$ such that $\det \mathbb{V}(\mathfrak{g},\ell,\vec{\lambda}) = D$, or prove that no such triple exists.
\end{problem*}

%In Section 2 below, I will outline a general strategy for proving
%that no such triple exists.  I want to emphasize: this is work in
%progress, this has not been carried out yet.  
In Section \ref{exp section} I will report some experimental results.  
In Section \ref{sl2 lemmas section}, I will give some lemmas concerning ranks and Chern classes of  $\sL_2$ conformal block
bundles. In Section \ref{dont cover 6 section}, I will show that conformal
block divisors for $\mathfrak{g}=\sL_2$ do not cover $\Nef(\M_{0,6})$. 
 In Section \ref{dont cover 9 section}, I will show that symmetrized conformal
block divisors for $\mathfrak{g}=\sL_2$ do not cover $\Nef(\M_{0,9}) \cap \Pic(\M_{0,n})^{S_9}$. 

%The theorems in Sections \ref{dont cover 6 section} and \ref{dont cover 9 section} are stated and proved for $\sL_2$.  I have done a little experimentation with higher rank Lie algebras, and expect that not every nef divisor on $\M_{0,6}$ arises from conformal blocks, and that the divisor $B_2+B_3+2B_4$ 

\subsection*{Acknowledgements}
It is a pleasure to thank Valery Alexeev and Angela Gibney for many
useful conversations about vector bundles of conformal blocks.  Boris
Alexeev provided Lemma \ref{boris lemma} below, which turned out to be a crucial ingredient, and also wrote the first generation computer
programs that we used to compute examples of $\sL_2$ conformal block bundles before I wrote the more general
\texttt{Macaulay2} package \texttt{ConformalBlocks}.  Finally, I would like
to thank the UGA Conformal Blocks Seminar participants, including Maxim Arap, Brian
Boe, Bill Graham, Elham Izadi, Jim Stankewicz, and Robert Varley.   This work was partially financially supported by the University of Georgia's NSF VIGRE grant DMS-03040000. 

\subsection*{Software}  Computer calculations are essential to this paper.  I have written written
a package \texttt{ConformalBlocks} which can be used to compute ranks, divisor classes, and intersection numbers
of conformal block bundles and divisors in \texttt{Macaulay2} \cites{ConformalBlocks,Macaulay2}.  Before I implemented the fusion rules in \texttt{ConformalBlocks}, I used \texttt{KAC} for this purpose \cite{KAC}.  I 
also used the software \texttt{NefWiz} and \texttt{polymake} to
explore the subcones of the nef cone generated by conformal block divisors of different types \cites{NefWiz,
polymake}.

\section{Experimental results} \label{exp section}
For $\M_{0,4}$ and $\M_{0,5}$, the nef cone is covered by pullbacks from GIT quotients of the form $(\Pro^{1})^{n} \dblq_{L} \SL_2$; see \cite{AS}.  By \cite{Fakh}*{Theorem 4.5}, these are conformal block bundles.  

\subsection{Conformal blocks when $n=6$}
For $\M_{0,6}$ we have not been able to find a conformal blocks descriptions for every extremal rays of the nef cone.  The nef cone of $\M_{0,6}$ can be computed by modern software such as \texttt{polymake} (\cite{polymake}).  One first computes the cone of divisors which nonnegatively intersect the $F$-curves.  This gives an upper bound for the nef cone.  One can prove that the extremal rays of this cone are all nef, and so this upper bound cone is the nef cone.  $\Nef \M_{0,6}$ has 3190 extremal rays, and these fall into 28 $S_6$ orbits, a calculation first reported by Faber \cite{Faber}.

We have conformal block descriptions for 11 of these orbits.  Six orbits are spanned by line bundles pulled back from the GIT quotients $(\Pro^{1})^{6} \dblq_{L} \SL_2$ studied in \cite{AS}; by \cite{Fakh}*{Theorem 4.5}, these are conformal block descriptions either for $\sL_2$ and some level $\ell$ or $\sL_k$, level 1 for some $k$.  For $\sL_2,$ the divisor $D(\sL_2,1,(1,1,1,1,1,1))$ is also extremal.  The rays contained in these seven orbits are the only extremal rays hit by $\sL_2$ conformal blocks; see Theorem \ref{n6 Fakh cone equals sl2 cone} below.  If we allow higher rank Lie algebras, we find that $D(\sL_3,1,(\omega_1^3 \omega_2^3)) $ and $D(\sL_3,2,(2\omega_1,\omega_1,\omega_1,2\omega_2,\omega_2,\omega_2)) $ are extremal, covering two more orbits, and $D(\mathfrak{sl}_{6},2,(2\omega_1,2\omega_1,2\omega_3,2\omega_3,2\omega_5,2\omega_5))$ and ${D}(\mathfrak{sl}_{6},2,(2\omega_1,2\omega_1,2\omega_3,2\omega_4,2\omega_4,2\omega_5))$ are extremal, covering two more orbits. It is an open question whether higher rank Lie algebras or higher levels would yield the remaining orbits; alternatively, perhaps conformal block determinants do not cover the nef cone of $\M_{0,6}$ no matter what $\mathfrak{g}$ is used.

In the table below, we list the 28 orbits, along with a representative of each.  The representatives $D$ are given by a vector of length 16; these are the coordinates of a $D$ in the so-called nonadjacent basis, ordered as follows: $$\{\delta_{13},\delta_{14},\delta_{15},\delta_{24},\delta_{25},\delta_{26},\delta_{35},\delta_{36},\delta_{46}, \delta_{124},\delta_{125},\delta_{134},\delta_{135},\delta_{136},\delta_{145},\delta_{146}\}.$$
Patterns I and II are defined in Definition \ref{pattern I II def} below.  The stabilizer listed is for the representative of each orbit. If the group is not easily recognizable, I have given its GAP identifier, which is an ordered pair; the first coordinate is the order of the group.

All the orbits below except orbits 6 and 9 are spanned by big divisors.  To check this, I computed the top self-intersections of the representatives listed below.

\begin{tabular}{|l|l|l|l|l|l|}
\hline
\mbox{\small Orbit} & \mbox{\small Size} & \mbox{\small
  Representative} & \mbox{\small Pattern} & \mbox{\small Conformal
  block description} & \mbox{\small Stabilizer}\\
\hline
1 & 1 & \tiny $(1, 0, 1, 1, 0, 1, 1, 0, 1, 0, 0, 0, 2, 0, 0, 0)$ \normalsize & &  $D(\sL_2,1,(1,1,1,1,1,1))$& $S_6$ \\
\hline 
2 & 1 & \tiny $(0, 1, 0, 0, 1, 0, 0, 1, 0, 1, 1, 1, 0, 1, 1, 1)$ \normalsize& I & $D(\sL_2,2,(1,1,1,1,1,1))$ 
& $S_6$ \\
\hline 
3 & 6 &  \tiny $(0, 1, 0, 1, 0, 0, 1, 0, 1, 1, 0, 0, 1, 0, 0, 1)$ \normalsize & &$D(\sL_2,3,(3,1,1,1,1,1))$ & $S_5$ \\
\hline 
4 & 10 & \tiny $(0, 1, 0, 0, 1, 0, 0, 1, 0, 1, 1, 0, 0, 1, 1, 1)$ \normalsize& I & & $(72,40)$\\
\hline
5 & 10 &\tiny $(1, 0, 1, 1, 0, 1, 1, 0, 1, 0, 0, 0, 3, 0, 0, 0)$ \normalsize &I & $D(\sL_3,1,(\omega_1,\omega_1, \omega_1,  \omega_2,\omega_2,\omega_2)) $ & $(72,40)$ \\
\hline
6 & 15 &\tiny $(1, 0, 0, 1, 0, 0, 0, 0, 0, 0, 0, 0, 1, 1, 0, 0)$ \normalsize & &$D(\sL_2,1,(1,1,1,1,0,0))$ & $S_4 \times Z_2$ \\
\hline
7 & 15 & \tiny $(0, 2, 0, 0, 1, 0, 0, 1, 0, 1, 1, 1, 0, 1, 1, 1)$ \normalsize & I&$ D(\sL_2,3,(2,2,1,1,1,1)) $ &  $S_4 \times Z_2$\\
\hline
8 & 15 & \tiny $(1, 1, 0, 0, 1, 1, 1, 0, 1, 1, 1, 1, 2, 1, 1, 1)$ \normalsize & I & &$S_4 \times Z_2$\\
\hline
9 & 30 & \tiny $(0, 1, 0, 0, 0, 0, 1, 0, 1, 1, 0, 0, 1, 0, 0, 1)$ \normalsize & & $D(\sL_2,2,(2,1,1,1,1,0))$ &  $S_4$\\
\hline
10 & 45 & \tiny $(1, 0, 1, 1, 0, 1, 1, 0, 1, 0, 0, 0, 3, 1, 0, 0)$ \normalsize &I, II & $D(\mathfrak{sl}_{6},2,(2\omega_1,2\omega_1,2\omega_3,2\omega_3,2\omega_5,2\omega_5))$&  $(16,11)$\\
\hline
11 & 60 & \tiny $(0, 1, 0, 2, 0, 0, 1, 1, 0, 2, 0, 0, 1, 1, 1, 1)$ \normalsize & I &$ D(\sL_2,4,(3,2,2,1,1,1))$ &  $(12,4)$\\
\hline
12 & 60 & \tiny $(0, 1, 1, 1, 0, 1, 1, 1, 0, 1, 0, 0, 2, 1, 1, 0)$ \normalsize &I, II & $D(\mathfrak{sl}_{6},2,(2\omega_1,2\omega_1,2\omega_3,2\omega_4,2\omega_4,2\omega_5))$ & $(12,4)$ \\
\hline
13 & 60& \tiny $(0, 2, 0, 0, 1, 1, 1, 1, 0, 2, 0, 1, 1, 1, 2, 2)$ \normalsize & I, II& &$(12,4)$\\
\hline
14 & 60 & \tiny $(0, 2, 0, 0, 1, 1, 1, 1, 0, 2, 1, 1, 1, 1, 2, 1)$ \normalsize & I & &$(12,4)$\\
\hline
15 & 60 & \tiny $(1, 0, 1, 1, 0, 1, 1, 0, 1, 0, 0, 0, 3, 1, 1, 0)$ \normalsize & I, II && $(12,4)$\\
\hline
16 & 60 & \tiny $(1, 1, 0, 1, 1, 1, 1, 0, 1, 0, 0, 1, 2, 1, 0, 1)$ \normalsize & II && $(12,4)$\\
\hline
17 & 72 & \tiny $(0, 2, 0, 0, 1, 1, 1, 1, 0, 1, 0, 1, 1, 1, 1, 2)$ \normalsize & I, II& &$(10,1)$\\
\hline
18 & 90 & \tiny $(0, 2, 0, 2, 1, 1, 1, 1, 0, 2, 0, 1, 1, 1, 1, 1)$ \normalsize &I & $D(\sL_3,2,(2\omega_1,\omega_1,\omega_1,2\omega_2,\omega_2,\omega_2))$ &$(8,3)$  \\
\hline
19 & 90 &\tiny $(0, 2, 0, 0, 1, 1, 1, 1, 0, 2, 0, 1, 1, 1, 1, 1)$ \normalsize  & I, II&& $(8,3)$\\
\hline
20 & 90 & \tiny $(0, 1, 0, 1, 1, 0, 0, 1, 0, 1, 1, 0, 0, 1, 1, 1)$ \normalsize & I, II&& $(8,3)$\\
\hline
21 & 180 &  \tiny $(0, 2, 0, 0, 1, 1, 1, 1, 0, 1, 0, 1, 1, 1, 1, 1)$ \normalsize&  I, II& &$Z_2 \times Z_2$\\
\hline
22 & 180 & \tiny $(0, 1, 0, 1, 1, 0, 0, 2, 0, 1, 1, 0, 0, 2, 1, 1)$ \normalsize & I, II& &$Z_2 \times Z_2$\\
\hline
23 & 180 &\tiny $(0, 1, 1, 1, 1, 0, 0, 2, 0, 1, 1, 0, 1, 2, 2, 1)$ \normalsize & I, II&& $Z_2 \times Z_2$\\
\hline
24 & 360 &\tiny $(0, 1, 1, 2, 1, 0, 0, 2, 0, 2, 1, 0, 1, 2, 2, 1)$ \normalsize &I, II& &$S_2$ \\
\hline
25 & 360 & \tiny $(0, 1, 1, 1, 0, 1, 1, 1, 1, 1, 0, 0, 2, 1, 1, 1)$ \normalsize & I, II& &$S_2$ \\
\hline
26 & 360 & \tiny $(0, 2, 0, 1, 1, 1, 2, 0, 1, 1, 0, 1, 2, 0, 1, 2)$ \normalsize & II&  &$S_2$ \\
\hline
27 & 360 & \tiny $(0, 1, 1, 1, 1, 0, 1, 1, 1, 2, 1, 0, 2, 1, 0, 1)$ \normalsize &I, II&& $S_2$ \\
\hline
28 & 360 & \tiny $(0, 2, 0, 1, 1, 1, 1, 1, 0, 1, 1, 1, 1, 1, 2, 2)$ \normalsize &I, II&& $S_2$ \\
\hline
\end{tabular}
\renewcommand{\arraystretch}{1}

%Carel Faber first checked the F-Conjecture for $n=6$.  In his unpublished note ``The nef cone of $\M_{0,6}$: a proof by inequalities only,'' he also described the orbits of the rays of the nef cone under the $S_6$-action permuting the marked points.

%We begin by repeating his calculation.  We used \texttt{Macaulay 2} to write the equations  $D \cdot F_{I_1,I_2,I_3,I_4} = 0$ for a divisor $D \in \Pic \M_{0,6}$ in the nonadjacent basis with an F-curve $F_{I_1,I_2,I_3,I_4}$.  This gives an H-representation of the F-cone.  Next we used \texttt{polymake} to get a V-representation of the F-cone from the H-representation.  We get 3190 rays.  Finally, we used \texttt{Macaulay 2} to find the $S_6$ orbits of the rays.  There are 28 orbits.  This agrees with Faber's description.

%Recall: the ``nonadjacent'' or polygonal basis for $\Pic(\M_{0,n},\Q)$ is obtained as follows.  Label the vertices of an $n$-gon with the numbers $1,\ldots,n$ (for instance, cyclically).  Then the nonadjacent basis for this labelling of the $n$-gon is given by those $D_I$ such that the elements $i\in I$ are not adjacent with respect to this labelling of the $n$-gon.  For instance, if we take the standard ordering and $n=6$, we get the (ordered) basis $$.

\subsection{Symmetric divisors when $n=6,7,8$} \label{symm exp section}

We now consider symmetric divisors on $\M_{0,n}$.  For small values of $n$, we can compute $\Nef(\M_{0,n})^{S_n}:=\Nef(\M_{0,n}) \cap \Pic(\M_{0,n}) ^{S_n}$.  Then, for $n\leq8$, for each extremal ray, we find a conformal block divisor spanning that ray.  The results are given in the table below.  Following \cite{KeelMcKernanContractible}, let $B_j = \sum_{|I|=j}^{\lfloor n/2 \rfloor} \delta_I$ (unless $j=n/2$, when we also insist $1 \in I$).  The divisor classes $\{B_j: 2\leq j\leq \lfloor n/2 \rfloor \}$ form a basis of $\Pic(\M_{0,n})^{S_n}$.  In the table below, the extremal rays of $\Nef(\M_{0,n})^{S_n}$ are given by their coefficients in the $B_j$ basis.  In the cases where $\vec{\lambda}$ is not $S_n$-symmetric, we take the symmetrization: $\Sym D := \sum_{\sigma \in S_n} \sigma D$.  

\begin{center}
\begin{tabular}{lll}
&Ray & Generator \\
 {n=6}  
&$\rayl 2:1 \rayr$& $D(\sL_2,1,1^6)$\\ 
&$\rayl1:3 \rayr$& $D(\sL_2,2,1^6)$\\
&&\\
 {n=7}  
&$\rayl1:1 \rayr$& $\Sym D(\sL_2,2,2^{5}1^{2})$  \\
&$\rayl1:3 \rayr$& $\Sym  D(\sL_2,3,1^{6}2)$\\
&&\\
 {n=8}  
&$\rayl3:2:4 \rayr$& $D(\sL_2,1,1^8)$\\
&$\rayl2:6:5 \rayr$& $D(\sL_2,2,1^8)$\\
&$\rayl1:3:6 \rayr$& $D(\sL_2,3,1^8)$\\
&$\rayl6:11:8 \rayr$& $\Sym D(\sL_3,1,\omega_{2}^{7}\omega_{1})$\\
&&\\
 {n=9} 
&$\rayl3:3:4 \rayr$& $\Sym D(\sL_2,2,2^{7}1^{2})$ \\
&$\rayl1:3:6 \rayr$& $\Sym D(\sL_2,4,1^82)$ \\
&$\rayl1:3:2 \rayr$& $D(\sL_3,1,\omega_{1}^{9})$\\
&$\rayl1:1:2 \rayr$ & {???}\\
\end{tabular}
\end{center}

\subsection{Symmetric divisors when $n=9$}
We have done exhaustive searches for conformal block determinants for $\sL_{2}$, levels 1 through 12, and for $\sL_{k}$ level 1, $k=2\ldots, 18$.  We have also computed many sporadic examples for higher levels and other simple Lie algebras.  Some typical results are illustrated in the first three figures below and/or on the next page.  As above, we show a cross section of the symmetric nef cone.  Red dots indicate rays which are spanned by symmetrized conformal block determinants.  The conformal block subcone is outlined in blue.   The fourth figure shows cones of pullbacks of symmetric nef divisors from $\M_{0,8}$ and $\M_{0,7}$ for reference.  

\input{4pics}

We have not found any examples which are multiples of $B_2+B_3+2B_4$, or any examples that lie to the right of the hyperplane joining the rays spanned by $3B_2+3B_3+4B_4 $ and $B_2+3B_3+6B_4$, which is given by the equation $D \cdot F_{6,1,1,1} = 3 D \cdot F_{5,2,1,1}$.  The closest we have found are the divisors $\Sym D(\mathfrak{so}_{4k+2}, 1,(\omega_{2k}^{8}, 0))$, which are symmetrizations of conformal block determinants for root system type $D_{2k+1}$.  In the third graphic (below) these are seen to lie on the hyperplane $D \cdot F_{6,1,1,1} = 3 D \cdot F_{5,2,1,1}$.  This leads to a natural question:

\begin{conjecture} \label{dontcoverconjecture}  Let $\mathfrak{g}$ be a simple Lie algebra.  Let $D:=\Sym D(\mathfrak{g},\ell,\vec{\lambda})$ be a symmetrized conformal block divisor on $\M_{0,9}$ associated to $\mathfrak{g}$.  Then $D$ satisfies the inequality $D \cdot F_{6,1,1,1} \leq  3 D \cdot F_{5,2,1,1}$.  In particular, conformal block divisors do not cover the symmetric nef cone of $\M_{0,9}$.
\end{conjecture}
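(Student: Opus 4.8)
\medskip

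The plan is to strip off the $S_9$-symmetry, reduce to an inequality between intersection numbers of a single conformal block divisor with F-curves, and then — imitating the argument that settles the $\sL_2$ case in Section~\ref{dont cover 9 section} — push that down to an inequality on $\M_{0,5}$.

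\emph{Step 1: reduction to an F-curve inequality.} Write $D=\Sym D(\mathfrak g,\ell,\vec\lambda)$. Since $D$ is $S_9$-invariant, $D\cdot F$ depends only on the combinatorial type of the F-curve $F$, and an orbit count gives $\Sym D\cdot F_{a,b,c,d}=|\mathrm{Stab}_{S_9}F_{a,b,c,d}|\sum_F D(\mathfrak g,\ell,\vec\lambda)\cdot F$, the sum running over all F-curves of type $(a,b,c,d)$. There are $\binom 96=84$ curves of type $(6,1,1,1)$, with stabilizer $S_6\times S_3$ of order $4320$, and $\binom 95\binom 42=756$ of type $(5,2,1,1)$, with stabilizer $S_5\times S_2\times S_2$ of order $480$ (and $84\cdot4320=756\cdot480=9!$). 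Hence $D\cdot F_{6,1,1,1}\le 3\,D\cdot F_{5,2,1,1}$ is equivalent to
\[
3\sum_{F\ \text{type}\ (6,1,1,1)}D(\mathfrak g,\ell,\vec\lambda)\cdot F\ \le\ \sum_{F\ \text{type}\ (5,2,1,1)}D(\mathfrak g,\ell,\vec\lambda)\cdot F.\tag{$\star$}
\]
Once $(\star)$ is known the ``in particular'' is immediate: the extremal ray $B_2+B_3+2B_4$ lies strictly on the far side of the hyperplane $D\cdot F_{6,1,1,1}=3\,D\cdot F_{5,2,1,1}$, so no symmetrized conformal block divisor can span it.

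\emph{Step 2: rewriting the two sides.} For a type-$(6,1,1,1)$ curve $F_0=F_{B,\{p\},\{q\},\{r\}}$ with $|B|=6$, Fakhruddin's F-curve formula collapses — the three singleton legs pin the internal weights there — to $D\cdot F_0=\sum_\nu r_B(\nu)\,\delta(\nu,\lambda_p,\lambda_q,\lambda_r)$, where $r_B(\nu)=\rank\mathbb V(\mathfrak g,\ell,(\vec\lambda_B,\nu^*))$ is a rank on $\M_{0,7}$ and $\delta(-,-,-,-)$ is a degree of a conformal block bundle on $\M_{0,4}$. Genus-$0$ factorization of $r_B$ at a point $a\in B$ gives $r_B(\nu)=\sum_\sigma N_{\lambda_a\,\nu^*\,\sigma}\,r_{B\setminus\{a\}}(\sigma)$. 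Averaging over $a\in B$ and pairing $F_0$ with the $18$ type-$(5,2,1,1)$ curves obtained from it by ``un-merging'' one element of $B$ onto a singleton leg (each such $(5,2,1,1)$-curve arises from exactly two type-$(6,1,1,1)$ curves), one checks that $(\star)$ follows from the \emph{local} statement: for every $a\in B$, every admissible internal weight $\sigma$, and the conformal block divisor $E:=D(\mathfrak g,\ell,(\sigma,\lambda_a,\lambda_p,\lambda_q,\lambda_r))$ on $\M_{0,5}$ with marked points $\sigma',a',p',q',r'$, writing $G_{\{x,y\}}$ for the F-curve whose two-element leg is $\{x,y\}$,
\[
E\cdot G_{\{a',\sigma'\}}\ \le\ E\cdot G_{\{a',p'\}}+E\cdot G_{\{a',q'\}}+E\cdot G_{\{a',r'\}}.\tag{$\star\star$}
\]
For $\mathfrak g=\sL_2$ this is exactly what Section~\ref{dont cover 9 section} proves: the Fakhruddin formula rewrites $D(\sL_2,\ell,\vec a)\cdot F$ as a sum over admissible internal-weight tuples, its positive terms are identified with lattice points of explicitly defined polytopes, and $(\star\star)$ — hence $(\star)$ — falls out of Lemma~\ref{boris lemma}. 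The proposal is to run the same machinery for general $\mathfrak g$.

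\emph{The obstacle.} The crux, and the reason this is a conjecture rather than a theorem, is that $(\star\star)$ \emph{cannot} be deduced from nefness of $E$ alone. Identifying $\M_{0,5}$ with $\mathrm{Bl}_4\Pro^2$ and the ten F-curves with the ten $(-1)$-curves, the curve class $G_{\{a',p'\}}+G_{\{a',q'\}}+G_{\{a',r'\}}-G_{\{a',\sigma'\}}$ works out to $H-2E_1+E_2$ (up to the relevant $S_5$-action); it pairs negatively with the $(-1)$-curve $E_2$, yet $H-2E_1+E_2-E_2=H-2E_1$ is not effective, so this class lies outside $\overline{\mathrm{NE}}(\M_{0,5})$. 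Thus a proof must use the finer positivity of the \emph{conformal block} divisors $E$ of the prescribed type — i.e., genuinely locate the subcone they span inside $\Nef(\M_{0,5})$ — or else avoid the lossy term-by-term estimate by retaining the correlation between the $\M_{0,6}$-rank factors $r_{B\setminus\{a\}}(\sigma)$ and the $\M_{0,5}$-factors that the averaging in Step~2 discards. For $\mathfrak g=\sL_2$ the polytope description of ranks and of $\M_{0,4}$-degrees makes precisely this cancellation visible, which is the role of Lemma~\ref{boris lemma}; Conjecture~\ref{dontcoverconjecture} asserts that an analogous combinatorial phenomenon persists for every simple $\mathfrak g$. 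A natural first target beyond $\sL_2$ is to carry Step~2 through for $\sL_n$ and settle the resulting polytope inequalities using level–rank duality.
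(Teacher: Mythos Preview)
The statement you are trying to prove is a \emph{conjecture}; the paper does not prove it, and says so explicitly immediately after stating it: ``I will not attempt to prove Conjecture~\ref{dontcoverconjecture} here, and instead pursue an easier result.'' So there is no ``paper's proof'' to compare against. Your own write-up acknowledges this in the ``obstacle'' paragraph --- you identify that the local inequality $(\star\star)$ does not follow from nefness and that a genuine argument would require new input. In that sense your proposal is honest: it is a strategy sketch, not a proof.

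However, your account of the $\sL_2$ case is wrong, and this matters because it is the only place where you claim the strategy actually closes. You write that ``for $\mathfrak g=\sL_2$ this is exactly what Section~\ref{dont cover 9 section} proves'' and that $(\star\star)$ ``falls out of Lemma~\ref{boris lemma}.'' Neither is true. Section~\ref{dont cover 9 section} proves only Theorem~\ref{no 112}: that the single extremal ray $B_2+B_3+2B_4$ is not a multiple of any symmetrized $\sL_2$ conformal block divisor. The method there is not an inequality between intersection numbers at all; Proposition~\ref{main prop} shows that if $D(\sL_2,\ell,\vec\lambda)$ has \emph{nonzero} intersection with two specific $F_{6,1,1,1}$-curves (and satisfies some mild hypotheses on the weights), then it has nonzero intersection with a specific $F_{5,2,1,1}$-curve. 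This ``nonzero forces nonzero'' statement is strictly weaker than the quantitative bound $(\star)$, and the paper says so: ``proving the conjecture would seem to require computing or bounding these intersection numbers, whereas in the proof, I only prove that certain intersection numbers are nonzero.'' So even the $\sL_2$ instance of Conjecture~\ref{dontcoverconjecture} is open in the paper, and your reduction to $(\star\star)$ --- whose correctness you assert with ``one checks that'' but do not verify --- has no established base case.
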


I will not attempt to prove Conjecture \ref{dontcoverconjecture} here, and instead pursue an easier result.  In Theorem \ref{no 112}, I show that the eastern ray $E:=B_2+B_3+2B_4$ is not spanned by any symmetrized conformal block divisor for $\sL_2$.  This is evidence for Conjecture \ref{dontcoverconjecture}, but my method is simpler; proving the conjecture would seem to require computing or bounding these intersection numbers, whereas in the proof, I only prove that certain intersection numbers are nonzero (see Proposition \ref{main prop}).  At present, working with $\sL_2$ is substantially easier than with higher rank Lie algebras because we have several useful combinatorial expressions for ranks and first Chern classes of conformal block bundles whose generalization to higher rank are not yet known.

\section{When \texorpdfstring{$\mathfrak{g}=\sL_2$}{g=sl_2}} \label{sl2 lemmas section}

When $\mathfrak{g}=\sL_2$ many of the formulas for Chern classes of conformal blocks have nice combinatorial expressions.  We recall these now.  

First, we recall the fusion rules (ranks of conformal block bundles on $\M_{0,3}$) for $\sL_2$, which are well-known (see for instance \cite{Beauville}*{Lemma 4.2, Cor. 4.4}).
\begin{proposition} \label{fusion rules} Let $\mathfrak{g}=\sL_2$.
\begin{enumerate}
\item For $n=1$, 
\begin{displaymath} r_{\lambda} = \left\{
\begin{array}{l}
1 \quad \mbox{ if $\lambda = 0$}\\
0 \quad \mbox{ otherwise.}\\
\end{array} \right.
\end{displaymath}
\item For $n=2$, 
\begin{displaymath} r_{\lambda \mu} = \left\{
\begin{array}{l}
1 \quad \mbox{ if $\mu = \lambda$}\\
0 \quad \mbox{ otherwise.}\\
\end{array}\right.
\end{displaymath}
\item For $n=3$,
\begin{displaymath} r_{\lambda_{1} \lambda_{2} \lambda_{3}} = \left\{
\begin{array}{l}
1 \quad \mbox{ if $\Lambda \equiv 0 \bmod 2$ and $|\lambda_1 -\lambda_2 | \leq \lambda_{3} \leq \min\{\lambda_{1}+ \lambda_{2}, 2\ell - \lambda_{1}-\lambda_{2} \} $}\\
0 \quad \mbox{ otherwise.}\\
\end{array} \right.
\end{displaymath}
\end{enumerate}
\end{proposition}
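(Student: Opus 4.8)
The plan is to reduce all three cases to the $n=3$ statement and then compute the $n=3$ fusion multiplicity by comparing covacua against classical $\sL_2$ invariants. Recall that $r_{\vec\lambda}$ is the rank of the conformal block bundle, dual to the coinvariant space $\big(L(\lambda_1)\otimes\cdots\otimes L(\lambda_n)\big)_{\mathfrak g(\Pro^1\setminus S)}$, where $L(\lambda_i)$ is the integrable level-$\ell$ highest-weight $\widehat{\mathfrak g}$-module attached to the $i$-th marked point, $S$ is the set of marked points, and $\mathfrak g(\Pro^1\setminus S)=\mathfrak g\otimes\Gamma(\Pro^1\setminus S,\mathcal O)$ acts through Laurent expansions at the $p_i$. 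By propagation of vacua the fusion ranks for $n=1,2$ satisfy $r_\lambda=r_{\lambda,0,0}$ and $r_{\lambda\mu}=r_{\lambda,\mu,0}$; substituting these triples into part (3), the triangle inequality $|\lambda-0|\le 0$ forces $\lambda=0$ in the first case and $|\lambda-\mu|\le 0$ forces $\mu=\lambda$ in the second, the remaining inequalities being automatic since the weights lie in the level-$\ell$ alcove. So the whole content is part (3).

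For (3) take $S=\{0,1,\infty\}$. The space of conformal blocks injects into the classical invariant space $\big(V_{\lambda_1}\otimes V_{\lambda_2}\otimes V_{\lambda_3}\big)^{\sL_2}$, where $V_\mu$ denotes the finite-dimensional irreducible $\sL_2$-module of highest weight $\mu$ (the bottom graded piece of $L(\mu)$): a conformal block is a linear functional on the triple tensor product which, by the gauge relations, is determined by its restriction to $V_{\lambda_1}\otimes V_{\lambda_2}\otimes V_{\lambda_3}$, and that restriction is annihilated by $\sL_2\otimes 1$, hence --- using self-duality of $\sL_2$-modules --- defines an invariant. By the classical Clebsch--Gordan rule this invariant space is $1$-dimensional exactly when $\Lambda:=\lambda_1+\lambda_2+\lambda_3$ is even and $|\lambda_1-\lambda_2|\le\lambda_3\le\lambda_1+\lambda_2$, and zero otherwise. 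It therefore remains to show: (a) when in addition $\Lambda\le 2\ell$, i.e. $\lambda_3\le 2\ell-\lambda_1-\lambda_2$, the conformal block space is still $1$-dimensional; and (b) when $\Lambda>2\ell$ it vanishes.

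The level enters only in (b), and I would get it from the integrability relation $(e\otimes t^{-1})^{\ell-\lambda+1}\,v_\lambda=0$ in $L(\lambda)$, with $v_\lambda$ a highest-weight vector and $e$ a positive root vector of $\sL_2$. Imposing on the candidate invariant the gauge relations coming from $e\otimes z^{-k}$ --- a rational function with a pole of the correct order at $\infty$ that is regular at $0$ and $1$ --- and feeding in this null-vector relation, the standard bookkeeping (the $\sL_2$ ``quantum Clebsch--Gordan'' computation of Tsuchiya--Kanie) produces a relation that kills the classical invariant precisely when $\Lambda>2\ell$. For (a) one must exhibit a \emph{nonzero} $3$-point block in that range; the cheapest route is to write down the explicit $\sL_2$ WZW intertwiner/three-point function, or else to quote the Verlinde formula together with the Kac--Walton truncation of tensor-product multiplicities, which for $\widehat{\sL_2}$ at level $\ell$ is the one-line affine-Weyl identity $N^{\lambda_3}_{\lambda_1\lambda_2}=N^{\lambda_3}_{\lambda_1\lambda_2,\mathrm{cl}}-N^{2\ell+2-\lambda_3}_{\lambda_1\lambda_2,\mathrm{cl}}$ (the subtracted term, together with the parity constraint, is what sharpens $\lambda_3\le 2\ell+1-\lambda_1-\lambda_2$ to $\lambda_3\le 2\ell-\lambda_1-\lambda_2$).

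The main obstacle is (a): proving that the level-$\ell$ relations impose nothing beyond the single cut $\lambda_3\le 2\ell-\lambda_1-\lambda_2$, equivalently that a nonzero three-point conformal block exists throughout that range. Everything else is classical $\sL_2$ representation theory and residue-theorem bookkeeping. Because Proposition~\ref{fusion rules} is standard, in the write-up I would simply cite \cite{Beauville}*{Lemma 4.2, Cor. 4.4} for (3) and record the propagation-of-vacua reduction for (1) and (2).
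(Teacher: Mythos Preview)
The paper does not prove this proposition at all: it is stated as a recollection of well-known facts with the citation ``see for instance \cite{Beauville}*{Lemma 4.2, Cor. 4.4}.'' Your final sentence --- cite Beauville for (3) and note the propagation reduction for (1) and (2) --- is exactly what the paper does, so on that level your proposal matches.

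Your sketch of the argument behind the citation is a reasonable outline of the standard proof (essentially what is in Beauville and Tsuchiya--Kanie): the injection of conformal blocks into classical $\sL_2$-invariants, Clebsch--Gordan for the classical part, and the integrability null-vector relation for the level cutoff. You correctly identify (a) --- nonvanishing in the allowed range --- as the part requiring the most work, and your suggested routes (explicit intertwiner or Kac--Walton) are the standard ones. None of this is needed for the paper, but nothing in your sketch is wrong.
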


In the formula below, $*$ denotes the involution on weights given by the longest word in the Weyl group $W(\mathfrak{g})$.  The involution $*$ is trivial for $\sL_2$, but we will sometimes write the $*$ in the sequel anyway, out of habit.

For any simple Lie algebra $\mathfrak{g}$, factorization readily yields the following formula for intersection numbers with F-curves (see \cite{Fakh}*{Proposition 2.5}): 
\begin{equation} \label{Fakh Fcurve formula} D(\mathfrak{g},\ell,\vec{\lambda}) \cdot F_{I_1,I_2,I_3,I_4} = \sum_{\vec{\mu} \in P_{\ell}^{4}} \deg \mathbb{V}_{\vec{\mu}} \, r_{\lambda_{I_1} \mu_{1}^{*}}  r_{\lambda_{I_2} \mu_{2}^{*}}  r_{\lambda_{I_3} \mu_{3}^{*}}  r_{\lambda_{I_4} \mu_{4}^{*}}.  
\end{equation}
Here  $\lambda_{I_j} \mu_j^{*}$ denotes the set of weights $\{ \lambda_i : i \in I_j\} \cup \{ \mu_j^{*} \}$.   We see that  to use (\ref{Fakh Fcurve formula}), one must be able to compute the degrees of conformal block bundles on $\M_{0,4} \cong \Pro^1$.  Fakhruddin has a general formula \cite{Fakh}*{Cor. 3.5} for this purpose, valid for any $\mathfrak{g}$, as well as a simpler formula for $\sL_2$ \cite{Fakh}*{Prop. 4.2}.  However,  when $\mathfrak{g}=\sL_2$, Boris Alexeev has found the following elegant formula:
\begin{lemma}[\cite{A}] \label{boris lemma} Let $\mathfrak{g}=\sL_2$ and let $n=4$.  Then $r_{abcd}=0$ if $a+b+c+d$ is odd, and if $a+b+c+d$ is even, 
\begin{equation} \label{r4} r_{abcd} = \max \left\{ 0,  1 + \frac{\ell}{2} - \frac{1}{4}\left(|a \! +\! b \! +\! c \! + \!  d \! - \!  2\ell| + |a \! + \!  b \! - \!  c \! - \!  d| + |\!   a \!  +\!   c \! - \!  b \! - \!  d| +|a \! + \!  d \! - \!  b \! - \!  c|  \right) \right\}.
\end{equation}
The degree of this vector bundle is
\begin{equation} \label{deg4} \deg \mathbb{V}_{abcd} = r_{abcd} \cdot \max\left\{0, \frac{1}{2}\left(a+b+c+d-2\ell \right) \right\}.
\end{equation}
\end{lemma}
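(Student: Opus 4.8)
The plan is to reduce both statements to factorization (propagation of the fusion rules) together with an elementary count of lattice points of fixed parity in an interval, the only nontrivial input being the two identities $\min\{x,y\}=\tfrac12(x+y-|x-y|)$ and $\max\{|x|,|y|\}=\tfrac12(|x+y|+|x-y|)$.

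\emph{The rank formula.} Since $\overline{M}_{0,4}\cong\Pro^1$ and $\mathbb{V}_{abcd}$ is locally free, $r_{abcd}$ is the dimension of any fibre; factorizing over a boundary point and using that $*$ is trivial for $\sL_2$ gives $r_{abcd}=\sum_{\mu\in P_\ell}r_{ab\mu}\,r_{\mu cd}=\sum_{\mu=0}^{\ell}r_{ab\mu}\,r_{cd\mu}$, with the $n\le 3$ ranks supplied by Proposition \ref{fusion rules}. By Proposition \ref{fusion rules}(3) the summand $r_{ab\mu}r_{cd\mu}$ is $1$ exactly when $\mu$ has the common parity of $a+b$ and of $c+d$ and $\mu\in[L,U]$, where $L=\max\{|a-b|,|c-d|\}$ and $U=\min\{a+b,c+d,2\ell-a-b,2\ell-c-d\}$; in particular $r_{abcd}=0$ when $a+b+c+d$ is odd. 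When $a+b+c+d$ is even, $L$ and $U$ both have the parity of $a+b$, so $r_{abcd}=\max\{0,1+\tfrac12(U-L)\}$ ($U-L$ is even, so this max is $0$ precisely when $U<L$). Applying the min-identity to $\min\{a+b,c+d\}$ and to $\min\{2\ell-a-b,2\ell-c-d\}$, then $\min\{z,2\ell-z\}=\ell-|z-\ell|$ with $z=\tfrac12(a+b+c+d)$, yields $U=\ell-\tfrac12|a+b+c+d-2\ell|-\tfrac12|a+b-c-d|$; the max-identity with $x=a-b$, $y=c-d$ yields $L=\tfrac12\bigl(|a+c-b-d|+|a+d-b-c|\bigr)$. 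Hence $U-L=\ell-\tfrac12 S$, where $S$ is the sum of the four absolute values in $(\ref{r4})$, so $1+\tfrac12(U-L)=1+\tfrac{\ell}{2}-\tfrac14 S$, which is $(\ref{r4})$.

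\emph{The degree formula.} I start from the $\sL_2$ case of Fakhruddin's first-Chern-class formula \cite{Fakh}*{Prop. 4.2}: on $\overline{M}_{0,4}$, where every $\psi$-class and every boundary class has degree $1$, one has with $\Delta_x:=\tfrac{x(x+2)}{4(\ell+2)}$ and $r:=r_{abcd}$
\[
\deg\mathbb{V}_{abcd}=r\,(\Delta_a+\Delta_b+\Delta_c+\Delta_d)-\sum_{\text{channels}}\ \sum_{\mu=0}^{\ell}\Delta_\mu\, r^{(1)}_\mu r^{(2)}_\mu,
\]
the outer sum being over the three pairings $s,t,u$ of $\{a,b,c,d\}$. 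For a fixed channel the admissible $\mu$'s are the $r$ terms of an arithmetic progression of step $2$ (the rank is channel-independent); since $\mu(\mu+2)=(\mu+1)^2-1$ and $\sum(\mu+1)^2=r m^2+\tfrac13 r(r^2-1)$ for such a progression with $m$ the average of the values $\mu+1$, that channel contributes $\tfrac1{4(\ell+2)}\bigl(r m^2+\tfrac13 r(r^2-1)-r\bigr)$. Rerunning the computation of the previous paragraph on each pairing and setting $P=|a+b+c+d-2\ell|$, $X=|a+b-c-d|$, $Y=|a+c-b-d|$, $Z=|a+d-b-c|$, $q=\tfrac{\ell}{2}+1$, one finds that $r$ and the three channel averages $m_s,m_t,m_u$ are exactly the four numbers $q-\tfrac14(P+\varepsilon_1 X+\varepsilon_2 Y+\varepsilon_3 Z)$ with $\varepsilon_i\in\{\pm1\}$ and $\varepsilon_1\varepsilon_2\varepsilon_3=1$. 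Each of $X,Y,Z$ and each product $XY,XZ,YZ$ then occurs with total sign zero, so
\[
r^2+m_s^2+m_t^2+m_u^2=4q^2-2qP+\tfrac14\bigl(P^2+X^2+Y^2+Z^2\bigr).
\]
Since the last display simplifies to $4(\ell+2)\deg\mathbb{V}_{abcd}=r\bigl((a+1)^2+(b+1)^2+(c+1)^2+(d+1)^2-m_s^2-m_t^2-m_u^2-r^2\bigr)$, and $P^2+X^2+Y^2+Z^2=4(a^2+b^2+c^2+d^2)-4\ell(a+b+c+d)+4\ell^2$, expanding gives $4(\ell+2)\deg\mathbb{V}_{abcd}=r(\ell+2)\bigl((a+b+c+d-2\ell)+|a+b+c+d-2\ell|\bigr)=2r(\ell+2)\max\{0,a+b+c+d-2\ell\}$, which is $(\ref{deg4})$.

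\emph{The main obstacle.} The rank formula is essentially forced once the two max/min identities are in hand. The work is the degree: one must recognize that $r,m_s,m_t,m_u$ are precisely the four ``even'' sign-combinations of $P,X,Y,Z$, which is what makes the cross terms cancel and forces the answer to depend only on $a+b+c+d-2\ell$; and the absolute value surviving in $2qP=(\ell+2)|a+b+c+d-2\ell|$ is exactly what produces the truncation $\max\{0,\cdot\}$. One can replace this structural observation by a brute-force case analysis on the ordering of $a,b,c,d$ and the sign of $a+b+c+d-2\ell$, but it is considerably more tedious.
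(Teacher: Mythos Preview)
The paper does not prove this lemma; it is quoted from \cite{A} (Boris Alexeev, ``in preparation''), so there is no original argument to compare against. That said, your proof is correct and self-contained.

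A few remarks on points that a reader might want spelled out. For the rank: your description of the path to $U$ is slightly telegraphic. The clean route is $\min\{p,q,2\ell-p,2\ell-q\}=\min\{\ell-|p-\ell|,\ell-|q-\ell|\}=\ell-\max\{|p-\ell|,|q-\ell|\}$, after which the max-identity with $x=p-\ell$, $y=q-\ell$ gives $U=\ell-\tfrac12(P+X)$ directly. Your final formula for $U$ is correct in any case.

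For the degree: the key structural claim, that $r,m_s,m_t,m_u$ are exactly $q-\tfrac14(P+\varepsilon_1X+\varepsilon_2Y+\varepsilon_3Z)$ with $\varepsilon_1\varepsilon_2\varepsilon_3=+1$, checks out (the $s$-channel has $L_s=\tfrac12(Y+Z)$, $U_s=\ell-\tfrac12(P+X)$, so $m_s=\tfrac{L_s+U_s}2+1=q-\tfrac14(P+X-Y-Z)$, and cyclically). The subsequent algebra is right: with $S=a+b+c+d$ and $q=\tfrac{\ell}2+1$ one gets
\[
(a+1)^2+\cdots+(d+1)^2-(r^2+m_s^2+m_t^2+m_u^2)=(\ell+2)\bigl((S-2\ell)+|S-2\ell|\bigr),
\]
whence $(\ref{deg4})$. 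You are implicitly using that when $r>0$ each of the three factorization channels has exactly $r$ admissible $\mu$'s; this is immediate from channel-independence of the rank, but worth a word. When $r=0$ both sides of $(\ref{deg4})$ vanish trivially.
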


Rasmussen and Walton give necessary and sufficient conditions for the rank of an $\sL_2$ conformal block bundle to be nonzero:
\begin{lemma}[\cite{RasmussenWalton}] \label{nonzero rank lemma} 
Let $\mathfrak{g}=\sL_2$.  Then $r_{\vec{\lambda}} \neq 0$ if and only $\Lambda$ is even, and for any subset $I \subseteq \{ 1,\ldots,n\}$ with $n-|I|$ odd, the inequality
\begin{equation}\label{gen tri ineq} 
\sum_{i \not\in I} \lambda_{i} - (n-|I|-1) \ell \leq  \sum_{i \in I} \lambda_{i}
\end{equation}
is satisfied.
\end{lemma}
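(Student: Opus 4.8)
The plan is to induct on $n$, using the factorization of conformal blocks at a separating node. Splitting the marked points as $\{1,2\}\sqcup\{3,\dots,n\}$ gives
\[
r_{\lambda_1\cdots\lambda_n}\;=\;\sum_{\mu\in P_\ell} r_{\lambda_1\lambda_2\mu^{*}}\,r_{\mu\lambda_3\cdots\lambda_n},
\]
the genus-zero instance of the factorization principle underlying (\ref{Fakh Fcurve formula}); since $*$ is trivial for $\sL_2$ and, by Proposition \ref{fusion rules}, $r_{\lambda_1\lambda_2\mu}=1$ exactly when $\mu\equiv\lambda_1+\lambda_2\pmod{2}$ and $|\lambda_1-\lambda_2|\le\mu\le\min\{\lambda_1+\lambda_2,\,2\ell-\lambda_1-\lambda_2\}$, this recursion reduces the problem to the three-point ranks. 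The base cases $n\le 3$ are a direct check: writing out ``$\Lambda$ even and $\sum_{i\notin I}\lambda_i-(n-|I|-1)\ell\le\sum_{i\in I}\lambda_i$ for all $I$ with $n-|I|$ odd'' for $n=1,2,3$ recovers the three parts of Proposition \ref{fusion rules} (for $n=3$ the two even-cardinality subsets give $\Lambda\le 2\ell$ together with the triangle inequalities $\lambda_i\le\sum_{j\ne i}\lambda_j$, which are equivalent to the stated range for $\lambda_3$). Throughout I abbreviate $J:=\{1,\dots,n\}\setminus I$, so the conditions read ``$\Lambda$ even, and $\sum_{j\in J}\lambda_j-(|J|-1)\ell\le\sum_{i\notin J}\lambda_i$ for every odd-cardinality $J$''.

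For the inductive step ($n\ge 4$), necessity is the easy direction. If $r_{\vec\lambda}\ne 0$, pick $\mu$ with $r_{\lambda_1\lambda_2\mu}=1$ and $r_{\mu\lambda_3\cdots\lambda_n}\ne 0$, and apply the inductive hypothesis to $(\mu,\lambda_3,\dots,\lambda_n)$. Adding the parities $\mu\equiv\lambda_1+\lambda_2$ and $\mu\equiv\lambda_3+\cdots+\lambda_n$ forces $\Lambda$ even. For an odd $J\subseteq\{1,\dots,n\}$ I would split into the cases $J\cap\{1,2\}=\emptyset$, $\{1,2\}\subseteq J$, and $|J\cap\{1,2\}|=1$; in each case the inductive inequality for $(\mu,\lambda_3,\dots,\lambda_n)$, applied to the odd subset $J$, $J\setminus\{1,2\}$, or $(J\setminus\{1\})\cup\{0\}$ (writing $0$ for the node weight) respectively, together with one of the three-point bounds $\mu\le\lambda_1+\lambda_2$, $\mu\le 2\ell-\lambda_1-\lambda_2$, or $\mu\ge|\lambda_1-\lambda_2|$, eliminates $\mu$ and delivers the inequality for $\vec\lambda$.

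Sufficiency is the heart of the matter. Assuming the conditions for $\vec\lambda$, I must produce $\mu\in P_\ell$ with $r_{\lambda_1\lambda_2\mu}=1$ and (by the inductive hypothesis) with $(\mu,\lambda_3,\dots,\lambda_n)$ satisfying the conditions. Every requirement on $\mu$ is either the congruence $\mu\equiv\lambda_1+\lambda_2\pmod{2}$ or an inequality $\mu\ge(\text{lower bound})$ or $\mu\le(\text{upper bound})$, the bounds being affine in the $\lambda_i$ and $\ell$; a short computation, using ``$\Lambda$ even'', shows every such bound is itself $\equiv\lambda_1+\lambda_2\pmod{2}$. Hence it suffices to show the largest lower bound does not exceed the smallest upper bound, after which $\mu:=(\text{largest lower bound})$ meets all requirements and automatically lies in $P_\ell$ (since $|\lambda_1-\lambda_2|$ is among the lower bounds and $\min\{\lambda_1+\lambda_2,2\ell-\lambda_1-\lambda_2\}\le\ell$ among the upper bounds). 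Comparing a lower bound with an upper bound: when both come from subsets of $\{3,\dots,n\}$ (one possibly with $0$ adjoined), those two subsets have opposite cardinality parity, hence are distinct, and the comparison collapses to the trivial estimates $0\le\lambda_j\le\ell$; when one of the two is a three-point bound $\lambda_1\pm\lambda_2$ or $2\ell-\lambda_1-\lambda_2$, the comparison, after rearrangement, is exactly one of the odd-cardinality inequalities assumed for $\vec\lambda$ — with subset obtained from the other subset by adjoining $\{1\}$, $\{2\}$, or $\{1,2\}$ and deleting $0$. The hard part will be organizing this dictionary cleanly and keeping the ``odd cardinality'' bookkeeping consistent under adjoining and deleting the indices $0,1,2$; there are only finitely many families of comparisons, but this is the one place real care is needed. (Since the lemma is due to Rasmussen and Walton \cite{RasmussenWalton}, one may instead simply cite it; the argument sketched here is the one that fits the combinatorics recalled in this section.)
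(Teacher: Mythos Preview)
The paper does not prove this lemma at all; immediately after the statement it simply says ``This result is their system of inequalities (17) translated into my notation,'' citing \cite{RasmussenWalton}. So there is no paper proof to compare against --- the author treats this as a known result.

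Your proposal, by contrast, supplies an actual proof, and it is correct. The induction on $n$ via the factorization identity $r_{\lambda_1\cdots\lambda_n}=\sum_\mu r_{\lambda_1\lambda_2\mu}\,r_{\mu\lambda_3\cdots\lambda_n}$ together with the three-point fusion rules of Proposition~\ref{fusion rules} is exactly the right engine; the necessity direction is straightforward, and in the sufficiency direction your parity observation (that every affine bound on $\mu$ is congruent to $\lambda_1+\lambda_2\bmod 2$, so the largest lower bound itself is an admissible $\mu$) cleanly reduces the problem to comparing each lower bound against each upper bound. I checked the case analysis you sketch: when both bounds come from odd subsets $J'\subseteq\{3,\dots,n\}$ and $K=J''\setminus\{0\}$ (even), the comparison is $2\sum_{J'\cap K}\lambda_j-2\sum_{(J'\cup K)^c}\lambda_j\le(|J'|+|K|-1)\ell$, and since $|J'|\ne|K|$ (opposite parity) one has $2\min(|J'|,|K|)\le|J'|+|K|-1$, so $0\le\lambda_j\le\ell$ suffices; when one bound is three-point, the comparison rearranges exactly to one of the assumed inequalities for $\vec\lambda$ with $J$ obtained by adjoining the appropriate subset of $\{1,2\}$, as you describe. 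Your caveat that the bookkeeping ``needs real care'' is honest but not a gap --- the dictionary works.

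What your approach buys: a self-contained argument using only the factorization and fusion rules already recalled in the paper, avoiding the polytope-volume machinery of \cite{RasmussenWalton}. This is a genuine contribution over the paper's bare citation. If you intend it for inclusion, you should either write out the Fourier--Motzkin-style case check in full (it is not long) or clearly flag it as a sketch alongside the citation.
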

This result is their system of inequalities (17) translated into my notation.  

The previous results lead to the following definition and proposition:

\begin{definition} \label{polytope Q def}
Let $\ell$ be a level, let $\vec{\lambda}$ an $n$-tuple of dominant integral weights for $\sL_2$ of level $\ell$, and let $F_{I_1,I_2,I_3,I_4}$ be an F-curve class on $\M_{0,n}$.  Let $\R^4$ have coordinates $\mu_1,\ldots,\mu_4$.  Then we define a polytope $\mathcal{Q} = \mathcal{Q}(\ell, \vec{\lambda} , F_{I_1,I_2,I_3,I_4}) \subset \R^4$ by the following inequalities: 
\begin{enumerate}
\item $0 \leq \mu_j \leq \ell$ for $j=1,\ldots,4$;
\item the Rasmussen-Walton inequalities associated to $ r_{\lambda_{I_j} \mu_{j}^{*}} \neq 0$, for $j=1,\ldots,4$;
\item the Rasmussen-Walton inequalities associated to $r_{\vec{\mu}} \neq 0$;
\item $\mu_1+\mu_2+\mu_3+\mu_4 \geq 2\ell+2$.
\end{enumerate}
\end{definition}

\begin{proposition} \label{polytope Q}
Let $\ell$ be a nonnegative integer, let $\vec{\lambda}$ be an $n$-tuple of dominant integral weights for $\sL_2$ of level $\ell$, and let ${I_1,I_2,I_3,I_4}$ be a partition of $\{1,\ldots,n\}$ into four nonempty subset.  Assume $r_{\vec{\lambda}} > 0$.  Then $D(\sL_2,\ell,\vec{\lambda}) \cdot F_{I_1,I_2,I_3,I_4} >0$ if and only if the polytope  $\mathcal{Q}(\ell, \vec{\lambda}, I_1,I_2,I_3,I_4)$ contains a integral point $\vec{\mu}$ whose coordinates have parities prescribed by $\mu_j \equiv \sum_{p \in I_j} \lambda_p \pmod{2}$. 
\end{proposition}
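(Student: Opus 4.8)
The plan is to unwind the formula (\ref{Fakh Fcurve formula}) for $\mathfrak{g}=\sL_2$ and show that the sum there is positive exactly when the polytope $\mathcal{Q}$ has an integral point of the prescribed parity. First I would substitute $\mathfrak{g}=\sL_2$ into (\ref{Fakh Fcurve formula}), so that
\[
D(\sL_2,\ell,\vec{\lambda}) \cdot F_{I_1,I_2,I_3,I_4} = \sum_{\vec{\mu} \in P_{\ell}^{4}} \deg \mathbb{V}_{\vec{\mu}} \prod_{j=1}^{4} r_{\lambda_{I_j}\mu_j^{*}}.
\]
Since $*$ is trivial for $\sL_2$, I will write $\mu_j$ for $\mu_j^{*}$ throughout. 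Every summand is a product of nonnegative integers: the ranks $r_{\lambda_{I_j}\mu_j}$ are nonnegative by definition, and $\deg \mathbb{V}_{\vec{\mu}}$ is nonnegative because the bundle is globally generated (alternatively, by the explicit formula (\ref{deg4}), which exhibits it as a product of two nonnegative quantities). Hence the whole sum is $\geq 0$, and it is $>0$ if and only if \emph{some} index $\vec{\mu}$ contributes a strictly positive term, i.e. if and only if there exists $\vec{\mu}\in P_\ell^4$ with $\deg \mathbb{V}_{\vec{\mu}}>0$ and $r_{\lambda_{I_j}\mu_j}>0$ for all $j$.

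Next I would translate each of these positivity conditions into the defining inequalities of $\mathcal{Q}$ together with the parity constraints. The condition $\vec{\mu}\in P_\ell^4$ is exactly condition (1), $0\le \mu_j\le \ell$. For each $j$, $r_{\lambda_{I_j}\mu_j}>0$ means the conformal block bundle on $\M_{0,|I_j|+1}$ with weights $\{\lambda_i : i\in I_j\}\cup\{\mu_j\}$ has nonzero rank; by Lemma~\ref{nonzero rank lemma} this is equivalent to the sum of these weights being even --- i.e. $\mu_j \equiv \sum_{p\in I_j}\lambda_p \pmod 2$, which is precisely the parity constraint in the statement --- together with the Rasmussen-Walton inequalities, which are condition (2). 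Similarly, $r_{\vec{\mu}}>0$ contributes the parity condition $\mu_1+\mu_2+\mu_3+\mu_4\equiv 0\pmod 2$ (automatic once the four individual parity constraints hold, since $\Lambda$ is even as $r_{\vec\lambda}>0$) and the Rasmussen-Walton inequalities of condition (3). It then remains to account for condition (4), $\mu_1+\mu_2+\mu_3+\mu_4\ge 2\ell+2$. For this I would invoke the $\sL_2$ degree formula (\ref{deg4}): given that $r_{\vec\mu}>0$, we have $\deg\mathbb{V}_{\vec\mu}>0$ if and only if $\tfrac12(\mu_1+\mu_2+\mu_3+\mu_4-2\ell)>0$, i.e. $\mu_1+\cdots+\mu_4 > 2\ell$; and since the left-hand side has the same parity as $2\ell$ (both even, using $\sum\mu_j$ even), strict inequality is equivalent to $\mu_1+\cdots+\mu_4\ge 2\ell+2$. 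Assembling these equivalences, the existence of a summand contributing positively is exactly the existence of an integral $\vec\mu$ lying in $\mathcal{Q}(\ell,\vec\lambda,I_1,I_2,I_3,I_4)$ with the prescribed parities, which is the claim.

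The main thing to be careful about --- and the step most likely to need the most attention --- is matching the Rasmussen-Walton inequalities for the smaller moduli spaces $\M_{0,|I_j|+1}$ with what is written as condition (2) of Definition~\ref{polytope Q def}: one must check that the inequalities (\ref{gen tri ineq}) for the weight tuple $\{\lambda_i:i\in I_j\}\cup\{\mu_j\}$, as $I$ ranges over subsets with complementary size odd, are linear in $\mu_j$ and that these are the inequalities intended by "the Rasmussen-Walton inequalities associated to $r_{\lambda_{I_j}\mu_j^*}\neq 0$". Here one should note a subtlety: Lemma~\ref{nonzero rank lemma} also requires the total weight sum to be even, which is the parity condition and \emph{not} one of the polytope inequalities --- so the polytope $\mathcal{Q}$ alone does not detect nonvanishing; one genuinely needs both $\mathcal{Q}$ and the parity prescription, which is why the statement is phrased as "contains an integral point whose coordinates have parities prescribed by...". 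The remaining bookkeeping --- that a nonnegative sum of products of nonnegative integers is positive iff some term is, and that all the relevant parities are consistent given $r_{\vec\lambda}>0$ --- is routine. Finally, one should observe that the hypothesis $r_{\vec\lambda}>0$ is used only to guarantee that $\Lambda=\sum\lambda_i$ is even, so that the parity condition $\mu_j\equiv\sum_{p\in I_j}\lambda_p\pmod 2$ for $j=1,\dots,4$ forces $\sum\mu_j$ even and hence $r_{abcd}$ for $\vec\mu$ is not automatically ruled out on parity grounds.
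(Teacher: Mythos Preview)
Your proposal is correct and follows essentially the same route as the paper: both arguments unwind Fakhruddin's formula~(\ref{Fakh Fcurve formula}), observe that all summands are nonnegative so positivity is equivalent to the existence of a single nonzero term, and then translate each factor's nonvanishing into the Rasmussen--Walton inequalities plus parity (Lemma~\ref{nonzero rank lemma}) and the degree condition via Lemma~\ref{boris lemma}. Your additional remarks about the role of the hypothesis $r_{\vec\lambda}>0$ and the separation between polytope inequalities and parity constraints are accurate and match the paper's treatment.
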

\begin{proof}
We combine (\ref{Fakh Fcurve formula}) with  Lemmas \ref{boris lemma} and \ref{nonzero rank lemma} above.  Since ranks and degrees of conformal block bundles on $\M_{0,n}$ are nonnegative integers, the right hand side of Equation (\ref{Fakh Fcurve formula}) is positive if it contains one nonzero summand.  We seek $\vec{\mu} = (\mu_1,\ldots,\mu_4)$ such that 
$$\deg \mathbb{V}_{\vec{\mu}} \, r_{\lambda_{I_1} \mu_{1}^{*}}  r_{\lambda_{I_2} \mu_{2}^{*}}  r_{\lambda_{I_3} \mu_{3}^{*}}  r_{\lambda_{I_4} \mu_{4}^{*}} \neq 0.$$
Thus we need $r_{\lambda_{I_j} \mu_{j}^{*}} \neq 0$ for $j=1,\ldots,4$.  This imposes the parity conditions $\mu_j \equiv \sum_{p \in I_j} \lambda_p \pmod{2}$ for $j=1,\ldots,4$, and, applying Lemma \ref{nonzero rank lemma} four times, four sets of Rasmussen-Walton inequalities.  We also require $\deg \mathbb{V}_{\vec{\mu}} \neq 0$, which by Lemma \ref{boris lemma} requires $r_{\mu} \neq 0$ and $\mu_1+\mu_2+\mu_3+\mu_4 > 2\ell$.  For   $r_{\mu} \neq 0$, the parity condition $\mu_1+\mu_2+\mu_3+\mu_4 \equiv 0 \pmod{2}$ is already satisfied, as $\mu_1+\mu_2+\mu_3+\mu_4 \equiv \Lambda \equiv 0 \pmod{2}$.  Thus it remains only to  impose the Rasmussen-Walton inequalities for $r_{\mu} \neq 0$.  Finally, since $\mu_1+\mu_2+\mu_3+\mu_4$ is even, we may replace the strict inequality $\mu_1+\mu_2+\mu_3+\mu_4 > 2\ell$ by the inequality $\mu_1+\mu_2+\mu_3+\mu_4 \geq 2\ell+2$.  

We have obtained exactly the inequalities which were used to define $\mathcal{Q}(\ell, \vec{\lambda}, I_1,I_2,I_3,I_4)$, and the parity condition in the statement of the proposition.
\end{proof}

In Sections \ref{dont cover 6 section} and \ref{dont cover 9 section} below, we will use the polytopes $\mathcal{Q}$ quite extensively.  Somewhat surprisingly, the calculations for specific F-curves when $n=6$ and $n=9$ show \textit{a posteriori} that the parity condition can be dropped.

%\textit{Notation:}  Given a set of weights $\vec{\lambda} = (\lambda_1,\ldots,\lambda_n)$, we write
%\[ \Lambda:= \lambda_1 + \cdots + \lambda_n
%\]
%for the sum.

%\begin{definition}  Let $\vec{\lambda}$ be a set of weights for $\sL_2$.  Then the \textit{critical level} associated to $\vec{\lambda}$ is $\ell = \Lambda/2-1$, and  
%we say $\vec{\lambda}$ is \textit{at the critical level} if $2\ell+2 = \Lambda$.  
%\end{definition}

%\begin{theorem}[\cite{Fakh}*{Theorem 4.5}] 
%If $\ell = \Lambda/2-1$, so that $\vec{\lambda}$ is at the critical level, then $D(\sL_2,\ell,\vec{\lambda})$ is a multiple of the pullback of the canonical ample line bundle on the GIT quotient $(\Pro^1)^n \dblq_{\vec{\lambda}} \SL_2$.
%\end{theorem}

\section{$\sL_2$ conformal blocks don't cover the nef cone for $n=6$} \label{dont cover 6 section}
In \cite{Fakh}*{Section 6}, Fakhruddin makes the following conjecture:  

\begin{definition}  We call an extremal ray of $\Nef(\M_{0,6})$ a \emph{Fakhruddin ray} if it is spanned either by $D(\sL_2,1,(1,1,1,1,1,1))$ or by one of the 127 pullbacks from GIT quotients $(\Pro^1)^n \dblq_{L} \SL_2$ studied in \cite{AS}.  We call an $S_6$ orbit of extremal rays a \emph{Fakhruddin orbit} if it is an orbit of Fakhruddin rays.  We call the subcone of $\Nef(\M_{0,6})$ spanned by Fakhruddin rays the \emph{Fakhruddin cone}.
\end{definition}

\begin{conjecture}[Fakhruddin Conjecture]  The cone generated by all $\sL_2$ conformal block divisors for all levels is equal to the Fakhruddin cone.
\end{conjecture}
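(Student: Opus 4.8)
The plan is to show that the cone generated by all $\sL_2$ conformal block divisors for all levels coincides with the Fakhruddin cone, by a two-sided containment. The inclusion of the Fakhruddin cone inside the $\sL_2$ conformal block cone is already established by the discussion in Section~\ref{exp section}: $D(\sL_2,1,(1,1,1,1,1,1))$ is a conformal block divisor by construction, and the $127$ pullbacks from the GIT quotients $(\Pro^1)^6 \dblq_L \SL_2$ are conformal block divisors by \cite{Fakh}*{Theorem~4.5}. So the real content is the reverse inclusion: every $\sL_2$ conformal block divisor $D(\sL_2,\ell,\vec\lambda)$ lies in the Fakhruddin cone. Equivalently, writing $C_F$ for the Fakhruddin cone and $C_F^\vee$ for its dual cone (the cone of linear functionals nonnegative on $C_F$), I must check that every $\sL_2$ conformal block divisor pairs nonnegatively against every extremal ray of $C_F^\vee$.

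The key step is therefore to compute $C_F^\vee$ explicitly. Since $C_F$ is a rational polyhedral subcone of $\Nef(\M_{0,6})$ with known generators (the $128$ Fakhruddin rays, falling into the seven Fakhruddin orbits identified in the table), its dual $C_F^\vee$ is computable by \texttt{polymake}; I would record the finitely many $S_6$-orbits of extremal rays (facet normals) of $C_F$. For each such facet functional $\varphi$, I need to show $\varphi \cdot D(\sL_2,\ell,\vec\lambda) \ge 0$ for all $\ell$ and all $\vec\lambda \in P_\ell^{\,6}$. The functionals $\varphi$ I expect to appear will be expressible as nonnegative combinations of F-curve classes (possibly up to the known linear relations among F-curves on $\M_{0,6}$), so that $\varphi \cdot D(\sL_2,\ell,\vec\lambda)$ becomes a nonnegative combination of the intersection numbers $D(\sL_2,\ell,\vec\lambda)\cdot F_{I_1,I_2,I_3,I_4}$, each of which is $\ge 0$ since conformal block divisors are nef (\cite{Fakh}). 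For any facet normal $\varphi$ that is \emph{not} a nonnegative combination of F-curves, a more delicate argument is needed: here I would use Proposition~\ref{polytope Q} together with Lemma~\ref{boris lemma} and the explicit formula \eqref{r4}–\eqref{deg4} to analyze, case by case on the pattern of $\vec\lambda$ (Patterns I and II of Definition~\ref{pattern I II def}), exactly which F-curves a given $D(\sL_2,\ell,\vec\lambda)$ meets positively, and then verify the inequality $\varphi\cdot D \ge 0$ directly from the combinatorics of the fusion rules in Proposition~\ref{fusion rules}.

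I expect the main obstacle to be precisely this last part: controlling $D(\sL_2,\ell,\vec\lambda)$ against the facet normals of $C_F$ that are not themselves F-curve classes. Because $\Nef(\M_{0,6})$ has $28$ orbits of extremal rays but only $7$ are Fakhruddin orbits, $C_F^\vee$ is strictly larger than the cone dual to $\Nef(\M_{0,6})$, so genuinely new inequalities must be proved, and these cannot come purely from nefness. The strategy there is to exploit that the Chern class of an $\sL_2$ conformal block bundle on $\M_{0,6}$ has an explicit combinatorial expression (cited in Section~\ref{sl2 lemmas section}), reduce to the behavior under the scaling $\ell \mapsto m\ell$, $\vec\lambda \mapsto m\vec\lambda$ and additivity of divisor classes under "adding boxes" to $\vec\lambda$, and thereby cut the verification down to finitely many base cases in small level, which can be confirmed by the \texttt{ConformalBlocks} package. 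The payoff, recorded separately as Theorem~\ref{n6 Fakh cone equals sl2 cone}, is that this shows the only extremal rays of $\Nef(\M_{0,6})$ hit by $\sL_2$ conformal blocks are exactly the seven Fakhruddin orbits, which is what the narrative in Section~\ref{exp section} asserts.
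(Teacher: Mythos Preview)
The paper does not prove this statement: it is stated as a conjecture, and the paragraph immediately following it says explicitly ``I will not establish Fakhruddin's full conjecture here, but I will prove a partial result in this direction.'' What the paper actually proves is Theorem~\ref{n6 Fakh cone equals sl2 cone}, which says only that no non-Fakhruddin \emph{extremal ray} of $\Nef(\M_{0,6})$ is spanned by an $\sL_2$ conformal block divisor. As the paper notes, this leaves open the possibility that some $\sL_2$ conformal block divisor lies outside the Fakhruddin cone without being extremal in $\Nef(\M_{0,6})$, and the author's methods ``give no information on this part of his conjecture.'' Your final sentence conflates these: Theorem~\ref{n6 Fakh cone equals sl2 cone} is strictly weaker than the conjecture, not a restatement of it.

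Your proposed argument for the hard inclusion has a genuine gap precisely where the difficulty lies. You correctly observe that facet normals of $C_F$ which happen to be nonnegative combinations of F-curves are handled by nefness, but the facets that are \emph{not} of this form are the entire content of the problem, and for those you offer only a sketch. The suggested reduction via scaling $\ell\mapsto m\ell$, $\vec\lambda\mapsto m\vec\lambda$ and ``additivity under adding boxes'' does not work as stated: the divisor class $D(\sL_2,\ell,\vec\lambda)$ is not linear or even polynomial in $(\ell,\vec\lambda)$ in any straightforward way (look at the piecewise structure in Lemma~\ref{boris lemma}), so there is no evident mechanism to reduce to finitely many base cases. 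The paper's own method, by contrast, avoids computing or bounding intersection numbers against facets of $C_F$ entirely; it works ray by ray on $\Nef(\M_{0,6})$, using Patterns I and II (Definition~\ref{pattern I II def}) to force a contradiction from the \emph{vanishing} pattern of a putative conformal block on a non-Fakhruddin ray. That technique proves the extremal-ray statement but, as the author acknowledges, cannot see interior points of the nef cone, which is why the full conjecture remains open.
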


Fakhruddin based his conjecture on calculations for small values of the level $\ell$.   In particular, his conjecture implies that the cone of all $\sL_2$ conformal block divisors is finitely generated (i.e. a polyhedral cone, not rounded).  To my knowledge, this is not known for any Lie algebra when $n \geq 6$.  However,  for a similar result, see \cite{GiansiracusaGibney}, where Giansiracusa and Gibney prove that for a fixed $n$, the cone generated by all $\sL_k$ level 1 conformal block divisors for all $k \geq 2$ is finitely generated.

I will not establish Fakhruddin's full conjecture here, but I will prove a partial result in this direction.  In this section, I will show that the non-Fakhruddin extremal rays are not spanned by $\sL_2$ conformal block divisors.  This establishes Fakhruddin's conjecture applied to extremal rays.  However, \textit{a priori}, there could be additional $\sL_2$ conformal blocks that are outside the Fakhruddin cone, but not extremal in $\Nef(\M_{0,6})$; Fakhruddin conjectures that this does not happen, but my methods give no information on this part of his conjecture.  

\begin{lemma} Let $E$ be a non-Fakhruddin extremal ray of $\Nef(\M_{0,6})$.  In seeking to find weights $\vec{\lambda}$ such that $E = D(\sL_2,\ell,\vec{\lambda})$, we may assume that $\lambda_i \geq 1$ for all $i$.
\end{lemma}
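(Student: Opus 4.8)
The plan is to show that if some weight $\lambda_i$ is zero, then the conformal block divisor $D(\sL_2,\ell,\vec\lambda)$ either vanishes or factors as a pullback, hence cannot span a non-Fakhruddin ray. First I would recall the standard \emph{propagation of vacua} (or the rank-one fusion rule $r_{\lambda 0} = \delta_{\lambda,0}$ from Proposition~\ref{fusion rules}(2)): if $\lambda_i = 0$, then the bundle $\mathbb{V}(\sL_2,\ell,\vec\lambda)$ on $\M_{0,n}$ is the pullback of $\mathbb{V}(\sL_2,\ell,\vec\lambda\,')$ under the forgetful map $\pi_i \colon \M_{0,n} \to \M_{0,n-1}$ that drops the $i$-th point, where $\vec\lambda\,'$ is $\vec\lambda$ with the $i$-th entry removed. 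Consequently $D(\sL_2,\ell,\vec\lambda) = \pi_i^{*} D(\sL_2,\ell,\vec\lambda\,')$ as divisor classes (using that $\det$ commutes with pullback and that $\pi_i$ has connected fibers so the rank is unchanged, or is zero).

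Next I would examine what kind of ray such a pullback class can span. If $D(\sL_2,\ell,\vec\lambda\,') = 0$ on $\M_{0,n-1}$, then $D(\sL_2,\ell,\vec\lambda) = 0$ and spans no ray at all, so this case is vacuous. Otherwise $D := \pi_i^{*} D(\sL_2,\ell,\vec\lambda\,')$ is a nonzero nef class pulled back from $\M_{0,5}$. By the $n=4,5$ results recalled in Section~\ref{exp section} (the nef cones of $\M_{0,4}$ and $\M_{0,5}$ are covered by pullbacks from $(\Pro^1)^{n}\dblq_{L}\SL_2$, which by \cite{Fakh}*{Theorem 4.5} are conformal block divisors), $D(\sL_2,\ell,\vec\lambda\,')$ lies in the cone spanned by such GIT pullbacks on $\M_{0,5}$; pulling back along $\pi_i$, the class $D$ then lies in the cone spanned by the $127$ pullbacks from $(\Pro^1)^6 \dblq_{L} \SL_2$, i.e.\ inside the Fakhruddin cone. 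So if $D$ happens to be extremal in $\Nef(\M_{0,6})$, it must span a Fakhruddin ray. In particular it cannot equal the non-Fakhruddin ray $E$. The same argument applies if \emph{several} of the $\lambda_i$ vanish, by iterating the forgetful maps down to $\M_{0,5}$ (or lower). Therefore, in searching for $\vec\lambda$ with $E = D(\sL_2,\ell,\vec\lambda)$, we may assume every $\lambda_i \geq 1$.

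The main obstacle I anticipate is the bookkeeping needed to pin down exactly which divisor classes on $\M_{0,6}$ arise as pullbacks from $\M_{0,5}$, and to be sure that these are contained in the Fakhruddin cone rather than merely in \emph{some} subcone of the nef cone. Concretely, one must check that the $S_5$-orbits of GIT pullbacks on $\M_{0,5}$ map, under the six forgetful maps $\pi_i$, into the list of $127$ GIT pullbacks on $\M_{0,6}$ used to define the Fakhruddin rays; this is a finite and essentially combinatorial verification, but it is where the real content sits. An alternative, slicker route that sidesteps the classification is to use the $S_6$-symmetry: it suffices to treat one representative orbit, and for the non-Fakhruddin orbits listed in the table one can simply observe that they contain no divisor of the form $\pi_i^{*}(\text{nef class on }\M_{0,5})$ — equivalently, no non-Fakhruddin extremal ray of $\M_{0,6}$ is $\pi_i$-trivial for any $i$ — which is visible from the boundary-divisor coordinates, since a class of the form $\pi_i^{*}(D')$ must have zero coefficient on every $\delta_I$ with $i \in I$ that is contracted by $\pi_i$. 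I would present whichever of these two verifications is shorter in the final write-up, but the forgetful-pullback framing is the conceptual core either way.
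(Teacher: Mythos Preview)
Your approach is correct but differs from the paper's. Both start from propagation of vacua to identify $D(\sL_2,\ell,\vec\lambda)$ as a pullback $\pi_i^* D'$ when some $\lambda_i=0$, but the paper then invokes \emph{bigness}: a nontrivial pullback from $\M_{0,5}$ has zero top self-intersection on $\M_{0,6}$ and is therefore not big, while the paper has already computed (see the remark just before the table in Section~\ref{exp section}) that every non-Fakhruddin orbit consists of big divisors. This finishes the argument in one line with no further bookkeeping. Your route instead tracks the pullback into the Fakhruddin cone via the $\Nef(\M_{0,5})=\text{GIT cone}$ identification; this is valid, and the ``obstacle'' you flag is not serious, since by Fakhruddin's Theorem~4.5 a GIT divisor on $\M_{0,5}$ with weights $\vec\lambda'$ and $\Lambda'=2\ell+2$ pulls back to the $\sL_2$ conformal block with weights $(\vec\lambda',0)$ on $\M_{0,6}$, which still has $\Lambda=2\ell+2$ and is therefore again a GIT divisor. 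What each approach buys: yours avoids the top-self-intersection computation but needs the GIT/conformal-block dictionary; the paper's avoids any cone-membership reasoning by reusing a numerical check it already performed.
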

\begin{proof}
If one or more weights $\lambda_i \in \vec{\lambda}$ is zero, then $D(\sL_2,\ell,\vec{\lambda})$ is a pullback from $\M_{0,n'}$ with $n'<n$ (this is ``propagation of conformal blocks''), and hence not big.  However, only two of the 28 $S_6$ orbits of extremal rays are not big, and they are pullbacks from GIT quotients, hence already in the Fakhruddin cone.
\end{proof}

%\begin{lemma}
%All but two orbits of extremal rays of $\Nef(\M_{0,6})$ have the following property: Let $E$ be a representative of $R$.  For each $i=1,\ldots,6$, there is an F-curve $F_{I_1,I_2,I_3,I_4}$ such that $I_4 = \{i\}$ and $E \cdot F_{I_1,I_2,I_3,I_4} >0$.  Hence, in seeking to find weights $\vec{\lambda}$ such that $E = D(\sL_2,\ell,\vec{\lambda})$, we may assume that $\lambda_i \geq 1$ for all $i$.

%The two orbits that fail to have this property are pullbacks from GIT quotients, hence already in the Fakhruddin cone.  
%\end{lemma}
%\begin{proof}
%That 26 of the 28 orbits have this property may be easily checked by computer.  We explain why this property allows us to assume $\lambda_i \geq 0$:

%Suppose that $\lambda_j = 0$.   Then we will show that $D \cdot F_{I_1,I_2,I_3,I_4} = 0$ whenever $I_4=\{\lambda_j\}$.

%Recall that Fakhruddin's intersection number with F-curves is 
%\begin{displaymath} D \cdot F_{I_1,I_2,I_3,I_4} = \sum_{\vec{\mu} \in P_{\ell}^{4}} \deg \mathbb{V}_{\vec{\mu}} \, r_{\lambda_{I_1} \mu_{1}^{*}}  r_{\lambda_{I_2} \mu_{2}^{*}}  r_{\lambda_{I_3} \mu_{3}^{*}}  r_{\lambda_{I_4} \mu_{4}^{*}}.  
%\end{displaymath}
%Let $I_4 = \{\lambda_j\}$.  Then by the two point fusion rules (see Prop. \ref{fusion rules}), to have $r_{\lambda_{I_4} \mu_{4}^{*}}\neq 0$, we must have $\mu_{4}^{*} = 0 = \mu_{4}$.  But then by propagation $\mathbb{V}_{\vec{\mu}}$ is a pullback from $\M_{0,3} \cong pt$, and hence $\deg \mathbb{V}_{\vec{\mu}} = 0$.  Thus $D \cdot F_{I_1,I_2,I_3,I_4} = 0$ whenever $I_4=\{\lambda_j\}$.
%\end{proof}

\begin{lemma} \label{n6 3111 lemma}
Suppose that $\vec{\lambda}$ satisfies $\lambda_i \geq 1 $ for all $i=1,\ldots, 6$, and 
\begin{enumerate}
\item[(I-1)] $2 + \lambda_5 + \lambda_6 \leq \lambda_1 + \lambda_2+\lambda_3 + \lambda_4$,
\item[(I-2)] $2 + \lambda_4 + \lambda_6 \leq \lambda_1 + \lambda_2+\lambda_3 + \lambda_5$,
\item[(I-3)] $2 + \lambda_4 + \lambda_5 \leq \lambda_1 + \lambda_2+\lambda_3 + \lambda_6$,
\item[(I-4)] $2\ell+2 \leq \lambda_1 + \lambda_2+\lambda_3 +\lambda_4 + \lambda_5+\lambda_6 $,
\item[(I-5)] $2 + \ell \leq \lambda_1+\lambda_2+\lambda_3$.
\end{enumerate}
Then $D(\vec{\lambda}) \cdot F_{\{4,5,6\},\{1\},\{2\},\{3\}} > 0$.
\end{lemma}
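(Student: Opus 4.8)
The plan is to invoke Proposition~\ref{polytope Q}. Throughout I would use that $r_{\vec{\lambda}}>0$, equivalently that the Rasmussen--Walton inequalities of Lemma~\ref{nonzero rank lemma} hold for $\vec{\lambda}$; this is in force in every application of the lemma, since there $D(\vec{\lambda})$ spans an extremal ray of $\Nef(\M_{0,6})$ and so is nonzero. Granting it, Proposition~\ref{polytope Q} reduces the claim to exhibiting an integral point $\vec{\mu}$ of $\mathcal{Q}=\mathcal{Q}(\ell,\vec{\lambda},F_{\{4,5,6\},\{1\},\{2\},\{3\}})$ with $\mu_j\equiv\sum_{p\in I_j}\lambda_p\pmod 2$ for each $j$.

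First, since $I_2,I_3,I_4$ are singletons, the Rasmussen--Walton conditions attached to $r_{\lambda_1\mu_2^*}\neq 0$, $r_{\lambda_2\mu_3^*}\neq 0$, $r_{\lambda_3\mu_4^*}\neq 0$ (equivalently the $n=2$ fusion rule of Proposition~\ref{fusion rules}) force $\mu_2=\lambda_1$, $\mu_3=\lambda_2$, $\mu_4=\lambda_3$, and these automatically have the prescribed parities. So $\mathcal{Q}$ collapses to a closed interval $[L,U]$ in the single variable $\mu_1$, and, writing $s=\lambda_4+\lambda_5+\lambda_6$ and $t=\lambda_1+\lambda_2+\lambda_3$, its defining inequalities unwind, via Lemmas~\ref{boris lemma} and~\ref{nonzero rank lemma}, to: $0\leq\mu_1\leq\ell$; $\ s-2\ell\leq\mu_1\leq s$ together with $\lambda_a-\lambda_b-\lambda_c\leq\mu_1\leq 2\ell+\lambda_a-\lambda_b-\lambda_c$ over orderings $(a,b,c)$ of $(4,5,6)$ (from $r_{\lambda_4,\lambda_5,\lambda_6,\mu_1}\neq 0$); the same with $(1,2,3)$ in place of $(4,5,6)$ (from $r_{\mu_1,\lambda_1,\lambda_2,\lambda_3}\neq 0$); and $\mu_1\geq 2\ell+2-t$ (from $\mu_1+\mu_2+\mu_3+\mu_4\geq 2\ell+2$).

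The core of the proof is checking $L\leq U$, i.e.\ that each lower bound is $\leq$ each upper bound, and I would sort the comparisons by their source. The three pairings $2\ell+2-t\leq 2\ell+\lambda_a-\lambda_b-\lambda_c$ with $(a,b,c)$ an ordering of $(4,5,6)$ are, after rearranging, precisely hypotheses (I-1), (I-2), (I-3); the pairing with $\ell$ is (I-5), with $s$ is (I-4), and with $t$ follows from (I-5) (which gives $t\geq\ell+2$). The pairings of $s-2\ell$ or $t-2\ell$ against a ``$2\ell+\lambda_a-\lambda_b-\lambda_c$'' upper bound, and of $\lambda_a-\lambda_b-\lambda_c$ from one triple against $2\ell+\lambda_p-\lambda_q-\lambda_r$ from the other, reduce to Rasmussen--Walton inequalities for $\vec{\lambda}$: for instance $s-2\ell\leq 2\ell+\lambda_p-\lambda_q-\lambda_r$ with $(p,q,r)$ an ordering of $(1,2,3)$ is $\Lambda-2\lambda_p\leq 4\ell$, the $|I|=\{p\}$ inequality; $\lambda_a-\lambda_b-\lambda_c\leq 2\ell+\lambda_p-\lambda_q-\lambda_r$ is the $|I|=3$ inequality for $I=\{p,b,c\}$; $\lambda_p-\lambda_q-\lambda_r\leq s$ is the $|I|=5$ inequality $\lambda_p\leq\sum_{i\neq p}\lambda_i$; and $s-2\ell\leq t$ is the $|I|=3$ inequality for $I=\{1,2,3\}$. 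Every remaining comparison is immediate from $1\leq\lambda_i\leq\ell$; in particular each upper bound is $\geq 1$, since $\lambda_a\geq 1$ and $\lambda_b+\lambda_c\leq 2\ell$.

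Finally, having $L\leq U$ with $L,U\in\Z$, I would pick $\mu_1$ of the right parity. As $r_{\vec{\lambda}}>0$ forces $\Lambda$ even, $s\equiv t\pmod 2$, so every lower bound above except $0$ itself is $\equiv s\pmod 2$. Hence if $L>0$ then $L\equiv s$ and $\mu_1=L$ works; if $L=0$, every lower bound is $\leq 0$ and $\mu_1\in\{0,1\}$ (selected by the parity of $s$) lies in $[L,U]$ because every upper bound is $\geq 1$. This also confirms \emph{a posteriori} the remark of Section~\ref{sl2 lemmas section} that the parity condition is automatic for this F-curve. There is no sharp estimate anywhere; the sole obstacle is bookkeeping — being systematic about the roughly two dozen lower-versus-upper comparisons and identifying, for each, which of the five hypotheses or which Rasmussen--Walton inequality for $\vec{\lambda}$ does the work.
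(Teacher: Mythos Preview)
Your proposal is correct and follows essentially the same approach as the paper's proof: invoke Proposition~\ref{polytope Q}, use the two-point fusion rules to eliminate $\mu_2,\mu_3,\mu_4$, and then carry out Fourier--Motzkin elimination in the single remaining variable $\mu_1$, handling the parity condition at the end. In fact your write-up is considerably more explicit than the paper's, which simply states that the inequalities (I-1)--(I-5) arise from Fourier--Motzkin elimination on an adjusted polytope $\mathcal{Q}'$ and refers the reader to external notes; you instead classify the lower-versus-upper bound pairings and identify, case by case, which hypothesis (I-1)--(I-5), which Rasmussen--Walton inequality for $\vec\lambda$, or which trivial bound $1\le\lambda_i\le\ell$ disposes of each. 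You are also right to flag that the argument uses $r_{\vec\lambda}>0$ (needed both to invoke Proposition~\ref{polytope Q} and to supply the Rasmussen--Walton inequalities for $\vec\lambda$); the paper does not list this as a hypothesis of the lemma but, as you note, it holds in every application.
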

\begin{proof}
We use Proposition \ref{polytope Q}.  

Let $\mathcal{Q} = \mathcal{Q}(\ell,\vec{\lambda},\{4,5,6\},\{1\},\{2\},\{3\})$ be the polytope defined in Definition \ref{polytope Q def}.  Since the sets $I_2$, $I_3$, and $I_4$ in the partition are singletons, the Rasmussen-Walton inequalities associated $r_{I_j \mu_j^{*}} \neq 0$ are just the fusion rules (see Prop. \ref{fusion rules}) for $j=2,3,4$.  In particular, to have $r_{\lambda_{1} \mu_{2}^{*}}\neq 0$, we must have $\mu_{2}^{*} = \lambda_1 = \mu_{2}$, and similarly $\mu_{3}^{*} = \lambda_2 = \mu_{3}$ and $\mu_{4}^{*} = \lambda_3 = \mu_{3}$.

Write $\mu = \mu_1$.  Thus, we only need to prove that $\mathcal{Q}$ contains a point of the form $(\mu,\lambda_1,\lambda_2,\lambda_3)$ where $\mu \equiv \lambda_4+\lambda_5+\lambda_6 \pmod{2}$.  For this, we note that almost all the inequalities defining $\mathcal{Q}$ give the correct parity for $\mu$ when made equalities.  The exceptions are the conditions $0 \leq \mu \leq \ell$.  Thus, by adjusting the upper and lower bounds for $\mu$, we can arrange that if $\mathcal{Q}$ is
nonempty, it contains a point with the desired parity.  Specifically, if $\lambda_1+\lambda_2+\lambda_3 \equiv 0 \pmod{2}$ we require $\mu \geq 2$ else $\mu \geq 1$.  If $\lambda_1+\lambda_2+\lambda_3 \equiv \ell \pmod{2}$ we require $\mu \leq \ell$ else $\mu \leq \ell-1$.  Let us write  $\mathcal{Q}'$ for this adjusted polytope.
  
The inequalities (I-1) through (I-5) above are obtained from the inequalities defining
$\mathcal{Q}'$ by applying Fourier-Motzkin elimination to get rid of
$\mu$ and then discarding redundant inequalities.  \footnote{There exist software packages  that in principle can carry out Fourier-Motzkin elimination
  symbolically, for instance, \texttt{QEPCAD B}.  However, I have not been able to coax it to do this calculation for me yet.  It seems the number of variables and number of inequalities is too large.}

I have posted my notes for this calculation on my website: \\ \neturltilde{http://www.math.uga.edu/~davids/dontcover/}{http://www.math.uga.edu/$\sim$davids/dontcover/}. 
\end{proof}

\begin{lemma} \label{n6 2211 lemma}
Suppose $\ell \geq 3$ and that $\vec{\lambda}$ satisfies $\lambda_i \geq 1 $ for all $i=1,\ldots, 6$.  Then  $D(\vec{\lambda}) \cdot F_{\{1,2\},\{3,4\},\{5\},\{6\}} > 0$ if and only if the following 10 inequalities are satisfied:
\begin{enumerate}
\item[(J-1)] $2\ell+2 \leq \lambda_1 + \lambda_2+\lambda_3 +\lambda_4 + \lambda_5+\lambda_6 $
\item[(J-2)] $\lambda_1+\lambda_2 \leq 2\ell-1$
\item[(J-3)] $\lambda_3+\lambda_4 \leq 2\ell-1$
\item[(J-4)] $2 + \lambda_1 + \lambda_2 \leq \lambda_3 + \lambda_4+\lambda_5 + \lambda_6$
\item[(J-5)] $2 + \lambda_3 + \lambda_4 \leq \lambda_1 + \lambda_2+\lambda_5 + \lambda_6$
\item[(J-6)] $\lambda_1 + \lambda_2 + \lambda_3 +\lambda_4 \leq 2 \ell-2 + \lambda_5 + \lambda_6$
\item[(J-7)] $\lambda_1 + \lambda_2+\lambda_5 + \lambda_6 \geq \ell +2$
\item[(J-8)] $\lambda_3 + \lambda_4+\lambda_5 + \lambda_6 \geq \ell +2$
\item[(J-9)] $\lambda_1 + \lambda_2 \leq \ell - 2 + \lambda_5 + \lambda_6$
\item[(J-10)] $\lambda_3 + \lambda_4 \leq \ell - 2 + \lambda_5 + \lambda_6$
\end{enumerate}
\end{lemma}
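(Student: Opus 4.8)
The plan is to imitate the proof of Lemma~\ref{n6 3111 lemma}, using Proposition~\ref{polytope Q} to turn positivity of $D(\vec{\lambda})\cdot F_{\{1,2\},\{3,4\},\{5\},\{6\}}$ into a question about integral points in the polytope $\mathcal{Q}=\mathcal{Q}(\ell,\vec{\lambda},\{1,2\},\{3,4\},\{5\},\{6\})\subset\mathbb{R}^{4}$. Two of the four blocks, $I_{3}=\{5\}$ and $I_{4}=\{6\}$, are singletons, so by the fusion rules (Proposition~\ref{fusion rules}) the conditions $r_{\lambda_{I_{3}}\mu_{3}}\neq 0$, $r_{\lambda_{I_{4}}\mu_{4}}\neq 0$ force $\mu_{3}=\lambda_{5}$ and $\mu_{4}=\lambda_{6}$; thus $\mathcal{Q}$ lives in the plane with coordinates $\mu_{1},\mu_{2}$. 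The other two blocks are genuine pairs, so $r_{\lambda_{1}\lambda_{2}\mu_{1}}\neq 0$ and $r_{\lambda_{3}\lambda_{4}\mu_{2}}\neq 0$ become the $n=3$ fusion triangle inequalities
\[ |\lambda_{1}-\lambda_{2}|\le\mu_{1}\le\min\{\lambda_{1}+\lambda_{2},\,2\ell-\lambda_{1}-\lambda_{2}\},\qquad |\lambda_{3}-\lambda_{4}|\le\mu_{2}\le\min\{\lambda_{3}+\lambda_{4},\,2\ell-\lambda_{3}-\lambda_{4}\}. \]
Substituting $\mu_{3}=\lambda_{5}$, $\mu_{4}=\lambda_{6}$ into the Rasmussen--Walton inequalities for $r_{\vec{\mu}}\neq 0$ and into $\mu_{1}+\mu_{2}+\mu_{3}+\mu_{4}\ge 2\ell+2$ produces, together with the box $0\le\mu_{j}\le\ell$, a planar polygon cut out by the box, by lower and upper bounds on $\mu_{1}+\mu_{2}$, and by lower and upper bounds on $\mu_{1}-\mu_{2}$.

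Before eliminating variables I would reduce to $\mu_{1},\mu_{2}\ge 1$: if some $\mu_{j}=0$ then by propagation of conformal blocks $\mathbb{V}_{\vec{\mu}}$ on $\M_{0,4}$ is pulled back from $\M_{0,3}=\mathrm{pt}$, hence has degree $0$ (consistently, Lemma~\ref{boris lemma} gives $\deg\mathbb{V}_{\vec{\mu}}=0$ here because $r_{\vec{\mu}}\neq 0$ forces $\mu_{1}+\mu_{2}+\lambda_{5}+\lambda_{6}\le 2\ell$ via the $n=3$ fusion rule), so such summands of~(\ref{Fakh Fcurve formula}) never contribute. Thus $D(\vec{\lambda})\cdot F_{\{1,2\},\{3,4\},\{5\},\{6\}}>0$ iff $\mathcal{Q}$ contains an integral point with $\mu_{1},\mu_{2}\ge 1$ and the parities prescribed in Proposition~\ref{polytope Q}. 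Parities are handled as in Lemma~\ref{n6 3111 lemma}: since $\Lambda$ is even, the prescribed parities force $\mu_{1}+\mu_{2}\equiv\mu_{1}-\mu_{2}\equiv\lambda_{5}+\lambda_{6}\pmod 2$, and every defining inequality except the box bounds already has an endpoint of the correct parity, so replacing the box bounds by the nearest bound of the correct parity yields an adjusted polygon $\mathcal{Q}'$ whose nonemptiness is what must be characterized. The combinatorial core is then a Fourier--Motzkin elimination of $\mu_{2}$ and then $\mu_{1}$ from the inequalities of $\mathcal{Q}'$, discarding redundant outputs; the ten surviving inequalities are (J-1)--(J-10). (For instance (J-1) is obtained by pairing $\mu_{1}\le\lambda_{1}+\lambda_{2}$, $\mu_{2}\le\lambda_{3}+\lambda_{4}$ against $\mu_{1}+\mu_{2}\ge 2\ell+2-\lambda_{5}-\lambda_{6}$; (J-2),(J-3) from $\mu_{j}\ge 1$ against $\mu_{j}\le 2\ell-\lambda_{2j-1}-\lambda_{2j}$; (J-4)--(J-6) from pairing the two ``$2\ell-\cdot$'' upper bounds with $\mu_{1}+\mu_{2}\ge 2\ell+2-\lambda_{5}-\lambda_{6}$; (J-7)--(J-10) from pairing $\mu_{j}\le\ell$ with the fusion bounds and with $\mu_{1}+\mu_{2}\ge 2\ell+2-\lambda_{5}-\lambda_{6}$.) As in Lemma~\ref{n6 3111 lemma} the full bookkeeping, which now involves many more inequality pairs than in the $\{4,5,6\},\{1\},\{2\},\{3\}$ case, I would relegate to the notes posted online.

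Two points will need genuine care, and together they are the main obstacle. First, because the statement is an equivalence, one must verify not only that each of (J-1)--(J-10) follows from $\mathcal{Q}'\neq\emptyset$ (immediate from the Fourier--Motzkin combinations) but also that (J-1)--(J-10) are jointly \emph{sufficient} for $\mathcal{Q}'\neq\emptyset$, i.e.\ that no facet inequality was wrongly pruned; this is cleanest to settle by exhibiting an explicit $(\mu_{1},\mu_{2})$ in terms of the $\lambda_{i}$ and $\ell$, rounded to the prescribed parities, and checking all the defining inequalities directly. Second, passing from ``$\mathcal{Q}'$ nonempty over $\mathbb{R}$'' back to ``$\mathcal{Q}$ has an integral point of the prescribed parities'' is not automatic in two variables: the constraint matrix has rows among $(\pm1,0),(0,\pm1),(\pm1,\pm1)$, which is not unimodular, so a priori $\mathcal{Q}'$ could consist only of half-integral points. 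Here I would use the explicit structure of $\mathcal{Q}'$ (a box intersected with two strips) together with the divisibility already recorded to show a nonempty $\mathcal{Q}'$ always contains a lattice point with $\mu_{1}\equiv\lambda_{1}+\lambda_{2}$, $\mu_{2}\equiv\lambda_{3}+\lambda_{4}\pmod 2$ and $\mu_{1},\mu_{2}\ge 1$ --- the \textit{a posteriori} phenomenon noted after Proposition~\ref{polytope Q}.
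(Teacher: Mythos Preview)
Your proposal is correct and follows essentially the same approach as the paper: the paper's proof simply states that it is analogous to Lemma~\ref{n6 3111 lemma} (use Proposition~\ref{polytope Q}, fix $\mu_3=\lambda_5$ and $\mu_4=\lambda_6$ by the two-point fusion rules, adjust the box bounds for parity, and Fourier--Motzkin eliminate the remaining variables), with the detailed bookkeeping relegated to online notes. You are in fact more explicit than the paper about the two genuine subtleties---checking sufficiency for the ``if and only if'' and handling integrality/parity with two free variables rather than one---which the paper only acknowledges via the \textit{a posteriori} remark following Proposition~\ref{polytope Q}.
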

\begin{proof}
The proof is analogous to the proof of Lemma \ref{n6 3111 lemma}.  I have posted my notes for this calculation on my website:  \neturltilde{http://www.math.uga.edu/~davids/dontcover/}{http://www.math.uga.edu/$\sim$davids/dontcover/}. 
\end{proof}

\begin{definition} \label{pattern I II def} Let $I_1 \subset \{1,2,3,4,5,6\}$.  We write $D \cdot F_{I_1} > 0$ if there exists some partition $I_1 \amalg I_2 \amalg I_3 \amalg I_4= \{1,2,3,4,5,6\}$ such that $D \cdot F_{I_1,I_2,I_3,I_4} > 0$.

 We say that an extremal ray $E$ of $\Nef(\M_{0,6})$ has \emph{Pattern I} if there exists $\sigma \in S_6 $ such that
\begin{equation}
\begin{array}{cccl}
\sigma E \cdot F_{\{1,2\}} & > &  0,\\
\sigma E \cdot F_{\{1,3\}} & > &  0,\\
\sigma E \cdot F_{\{2,3\}} & > &  0,\\
\sigma E \cdot F_{\{4,5\}} & > &  0,\\
\sigma E \cdot F_{\{4,6\}} & > &  0,\\
\sigma E \cdot F_{\{5,6\}} & > &  0,& \\
\sigma E \cdot F_{\{1,2,3\},\{4\},\{5\},\{6\}} & = &  0,\\
\sigma E \cdot F_{\{4,5,6\},\{1\},\{2\},\{3\}} & = &  0.
\end{array}
\end{equation}

We say that an extremal ray $E$ of $\Nef(\M_{0,6})$ has \emph{Pattern II} if there exists $\sigma \in S_6 $ such that
\begin{equation}
\begin{array}{cccl}
\sigma E \cdot F_{\{1,3\},\{2,4\},\{5\},\{6\}} & > &  0,\\
\sigma E \cdot F_{\{1,5\},\{3,4\},\{5\},\{6\}} & > &  0,\\
\sigma E \cdot F_{\{1,2,3\},\{4\},\{5\},\{6\}} & > &  0,\\
\sigma E \cdot F_{\{2,3,4\},\{1\},\{5\},\{6\}} & = &  0,\\
\sigma E \cdot F_{\{1,2\},\{3,4\},\{5\},\{6\}} & = &  0.
\end{array}
\end{equation}

\end{definition}

\begin{lemma}  \label{n6 ray structure}  Every non-Fakhruddin extremal ray $E$ has at least one of the two patterns I or II defined above.
\end{lemma}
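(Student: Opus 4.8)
The plan is to reduce the lemma to a finite, orbit-by-orbit verification and then carry that verification out on the explicit representatives tabulated in Section~\ref{exp section}. The first observation is that ``having Pattern~I'' and ``having Pattern~II'' are $S_6$-invariant properties of a ray: each pattern is defined by the \emph{existence} of a $\sigma\in S_6$ making a prescribed list of intersection numbers $\sigma E\cdot F_{\bullet}$ vanish and another prescribed list strictly positive, so if $E$ has a pattern then so does $\tau E$ for every $\tau\in S_6$. Consequently it suffices to produce, for one representative of each non-Fakhruddin $S_6$-orbit, a witnessing permutation $\sigma$ together with the data realizing Pattern~I or Pattern~II. The non-Fakhruddin orbits are precisely the $21$ orbits of the table other than the seven that carry $\sL_2$ conformal block descriptions (equivalently, the Fakhruddin orbits), namely orbits $1,2,3,6,7,9,11$; these $21$ are exactly the orbits labeled ``I'' and/or ``II'' in the Pattern column, so the content of the lemma is the confirmation of those entries.

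For the verification itself I would use that, once $E$ is written in the nonadjacent basis $\{\delta_{13},\dots,\delta_{146}\}$ (these are the coordinates listed in the table), the pairing $E\cdot F_{I_1,I_2,I_3,I_4}$ with any F-curve is a fixed linear functional of those $16$ coordinates, which I would record once for every F-curve and every four-part partition refining a $2$- or $3$-element set occurring in Definition~\ref{pattern I II def}; note also that, $E$ being nef, all of these intersection numbers are automatically $\geq 0$. Evaluating Pattern~I on a candidate pair $(E,\sigma)$ then amounts to checking the two equalities $\sigma E\cdot F_{\{1,2,3\},\{4\},\{5\},\{6\}}=\sigma E\cdot F_{\{4,5,6\},\{1\},\{2\},\{3\}}=0$ and, for each of the six pairs $\{i,j\}$ required there, exhibiting one refinement of $\{i,j\}$ to a four-part partition of $\{1,\dots,6\}$ with positive intersection with $\sigma E$; Pattern~II is analogous with three positivities and two vanishings. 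The search over $\sigma$ is finite and in practice short, since the stabilizer of the tabulated representative (also listed in the table) reduces it to a handful of cosets. I would carry out these evaluations with the \texttt{ConformalBlocks} package, or equally well directly in \texttt{polymake}, producing for each non-Fakhruddin orbit an explicit $\sigma$ and an explicit choice of refining partitions.

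I expect the main obstacle to be entirely organizational rather than conceptual: there is no single clean argument covering all $21$ orbits at once, so the proof is a bounded but sizable bookkeeping exercise in which each orbit generally needs its own permutation and its own choices of refining four-part partitions. I also note that Pattern~I or~II is only a \emph{necessary}-type feature of non-Fakhruddin orbits, not a characterization — the Fakhruddin orbits $2,7,11$ also carry Pattern~I, while orbits $1,3,6,9$ carry neither — so the lemma genuinely requires inspecting the non-Fakhruddin orbits individually rather than deducing the patterns from some abstract property; the patterns of Definition~\ref{pattern I II def} were designed precisely so that this finite check goes through on every one of them.
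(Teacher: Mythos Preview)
Your proposal is correct and is essentially the same approach the paper takes: the paper's proof consists of the single sentence ``This can be checked by computer,'' together with a pointer to posted notes, and your write-up simply spells out how that finite orbit-by-orbit verification is organized. Your identification of the seven Fakhruddin orbits as $1,2,3,6,7,9,11$ and your observation that the patterns are $S_6$-invariant (so one representative per orbit suffices) are exactly what underlies the computer check.
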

\begin{proof}
This can be checked by computer.  I have posted my notes for this calculation on my website:  \neturltilde{http://www.math.uga.edu/~davids/dontcover/}{http://www.math.uga.edu/$\sim$davids/dontcover/}. 
\end{proof}

\begin{theorem}  \label{n6 Fakh cone equals sl2 cone}
Let $E$ be a non-Fakhruddin extremal ray (that is, an extremal ray of $\Nef(\M_{0,6})$ which is not a pullback from $(\Pro^1)^n \dblq_{L} \SL_2$ or $D(\sL_2, 1, (1,1,1,1,1,1))$).  Then $E$ is not a multiple of any $\sL_2$ conformal block divisor. 
\end{theorem}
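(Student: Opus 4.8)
The plan is to prove the contrapositive-style dichotomy: if $E = c\cdot D(\sL_2,\ell,\vec\lambda)$ for some level $\ell$, weights $\vec\lambda$, and $c>0$, then by Lemma~\ref{n6 ray structure} the ray $E$ has Pattern~I or Pattern~II, and in either case I will derive a contradiction from the vanishing/nonvanishing conditions that define those patterns. By the preceding lemma we may assume $\lambda_i\geq 1$ for all $i$, and since $E$ is not Fakhruddin it is big, so $r_{\vec\lambda}>0$; this lets me invoke Proposition~\ref{polytope Q}. The intersection numbers are $S_6$-equivariant, so after relabeling I may assume the partition realizing the pattern is the standard one appearing in Definition~\ref{pattern I II def}.

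First I would handle \textbf{Pattern I}. Here $E\cdot F_{\{1,2,3\},\{4\},\{5\},\{6\}}=0$ and $E\cdot F_{\{4,5,6\},\{1\},\{2\},\{3\}}=0$. I will apply Lemma~\ref{n6 3111 lemma} to each of these two F-curves (the second directly, the first after the evident permutation swapping $\{1,2,3\}\leftrightarrow\{4,5,6\}$). Since both intersection numbers vanish, for each F-curve at least one of the five inequalities (I-1)--(I-5) must \emph{fail}. I would then combine this with the six positivity conditions $E\cdot F_{\{i,j\}}>0$ coming from Pattern~I: each such condition means some $2{+}2{+}1{+}1$ F-curve meets $E$ positively, and Lemma~\ref{n6 2211 lemma} converts that into the assertion that all ten inequalities (J-1)--(J-10) hold for the corresponding partition (assuming $\ell\geq 3$; the cases $\ell\leq 2$ are small and can be checked separately, or one notes that low levels only produce Fakhruddin rays). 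The strategy is to show that the full list of (J)-inequalities forced by the six positive $2{+}2{+}1{+}1$ curves, together with the requirement $\lambda_i\geq 1$, implies that \emph{all} of (I-1)--(I-5) hold for both $3{+}1{+}1{+}1$ partitions — contradicting the vanishing. This is a finite linear-arithmetic deduction in the six variables $\lambda_1,\dots,\lambda_6,\ell$; I expect it to reduce, after symmetrizing over the $S_3\times S_3$ action preserving the pattern, to a short argument (for instance, the (J-7),(J-8)-type inequalities $\lambda_a+\lambda_b+\lambda_5+\lambda_6\geq \ell+2$ combined with the "sum $\geq 2\ell+2$" inequality (J-1)/(I-4) should directly give the (I-5)-type inequalities $\lambda_a+\lambda_b+\lambda_c\geq \ell+2$).

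Next I would treat \textbf{Pattern II}. Here $E\cdot F_{\{1,2\},\{3,4\},\{5\},\{6\}}=0$ is a $2{+}2{+}1{+}1$ vanishing, so by Lemma~\ref{n6 2211 lemma} at least one of (J-1)--(J-10) fails for that partition; likewise $E\cdot F_{\{2,3,4\},\{1\},\{5\},\{6\}}=0$ forces one of (I-1)--(I-5) to fail for the corresponding $3{+}1{+}1{+}1$ partition (via Lemma~\ref{n6 3111 lemma}). Against these I will play the three Pattern~II positivity conditions — two $2{+}2{+}1{+}1$ curves giving full (J)-lists, and the $3{+}1{+}1{+}1$ curve $F_{\{1,2,3\},\{4\},\{5\},\{6\}}$ giving a full (I)-list — and again derive a contradiction by linear arithmetic. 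As in Pattern~I, the positive $3{+}1{+}1{+}1$ and $2{+}2{+}1{+}1$ inequalities overdetermine the weights enough to force the supposedly-failing inequality to hold.

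The main obstacle I anticipate is the Pattern~I case: there are $\binom{6}{2}=6$ unordered pairs but only some of them appear in a single Pattern-I configuration, and each positive $F_{\{i,j\}}$ only tells me that \emph{some} completion to a $2{+}2{+}1{+}1$ partition works, not which one — so I must argue that whichever completion is used, enough of (J-1)--(J-10) is available. Making this robust (independent of the choice of completion) is the delicate bookkeeping step; I would organize it by showing that a minimal sub-collection of the (J)-inequalities, shared by \emph{all} valid completions, already suffices. As with the other lemmas in this section, the final linear-arithmetic elimination can be carried out by hand or verified by Fourier--Motzkin elimination, and I would post the detailed case analysis on the website \neturltilde{http://www.math.uga.edu/~davids/dontcover/}{http://www.math.uga.edu/$\sim$davids/dontcover/} as was done for Lemmas~\ref{n6 3111 lemma}, \ref{n6 2211 lemma}, and \ref{n6 ray structure}.
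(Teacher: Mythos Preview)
Your overall architecture matches the paper's: use Lemma~\ref{n6 ray structure} to reduce to Patterns~I and~II, then play the positivity hypotheses against Lemmas~\ref{n6 3111 lemma} and~\ref{n6 2211 lemma}. For Pattern~II your sketch is essentially what the paper does, and your worry about ``which completion'' of $F_{\{i,j\}}$ is real but dissolves cleanly: the only $(J)$-inequalities the paper actually extracts from $D\cdot F_{\{i,j\}}>0$ are the $(J\text{-}4)/(J\text{-}5)$ type, namely $2+\lambda_i+\lambda_j\le \sum_{k\ne i,j}\lambda_k$, and these are independent of how the complementary four indices are split.

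There is, however, a genuine gap in your Pattern~I plan. You propose to derive the $(I\text{-}5)$-type inequalities $\lambda_1+\lambda_2+\lambda_3\ge \ell+2$ and $\lambda_4+\lambda_5+\lambda_6\ge \ell+2$ from the $(J)$-lists by pure linear arithmetic. This is impossible: take $\lambda_1=\cdots=\lambda_6=k$ and $\ell=3k-1$ with $k\ge 2$. Then every $(J\text{-}1)$--$(J\text{-}10)$ inequality holds for every $2{+}2{+}1{+}1$ partition, yet both $(I\text{-}5)$-type inequalities fail since $3k<\ell+2=3k+1$. So Fourier--Motzkin alone cannot close this case. The paper handles it with an external input you have not mentioned: if both $(I\text{-}5)$-type inequalities fail then $\Lambda\le 2\ell+2$; the case $\Lambda\le 2\ell$ makes $D$ trivial, while $\Lambda=2\ell+2$ forces $D$ to be a GIT pullback by \cite{Fakh}*{Theorem~4.5}, contradicting the assumption that $E$ is non-Fakhruddin. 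This appeal to Fakhruddin's GIT identification is the missing ingredient in your Pattern~I argument, and without it the dichotomy does not close.
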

\begin{proof}
By Lemma \ref{n6 ray structure}, we know that $E$ has at least one of the the two patterns I or II defined above.  We will show that $\sL_2$ conformal blocks cannot have either pattern. 

We consider Pattern I first.  We will show that if $D:= D(\sL_2, \ell, \vec{\lambda})$ satisfies 
\begin{displaymath}
\begin{array}{cccl}
D \cdot F_{\{1,2\}} & > &  0,\\
D \cdot F_{\{1,3\}} & > &  0,\\
D \cdot F_{\{2,3\}} & > &  0,\\
D \cdot F_{\{4,5\}} & > &  0,\\
D \cdot F_{\{4,6\}} & > &  0,\\
D \cdot F_{\{5,6\}} & > &  0, 
\end{array}
\end{displaymath}
then at least one of the intersection numbers $D \cdot F_{\{4,5,6\},\{1\},\{2\},\{3\}} $ or $D \cdot F_{\{1,2,3\},\{4\},\{5\},\{6\}} $ is nonzero.  Thus, $D$ cannot be a multiple of any non-Fakhruddin ray $E$.  

We consider the intersection $D \cdot F_{\{4,5,6\},\{1\},\{2\},\{3\}} $ first.  By Lemma \ref{n6 3111 lemma}, this intersection number is nonzero if inequalities (I-1)--(I-5) are satisfied.  We consider each of these in turn.  

The hypothesis $D \cdot F_{\{5,6\}} >0$ implies (I-1), as (I-1) is the analogue of inequality (J-4) or (J-5) for $D \cdot F_{\{5,6\}} $.  Similarly, $D \cdot F_{\{4,5\}}  >  0$ and 
$D \cdot F_{\{4,6\}}  >   0$ imply (I-2) and (I-3), respectively.  (I-4) follows from any of these nonzero intersection numbers, as $\Lambda \leq 2\ell$ implies $D$ is trivial.  

Thus only inequality (I-5) remains.  The hypotheses above do not guarantee that (I-5) is satisfied; hence, $D \cdot F_{\{4,5,6\},\{1\},\{2\},\{3\}} $ could be zero.  However, if it is, then we know $\lambda_1 + \lambda_2 + \lambda_3 \leq \ell+1$.

Let (I-1')-(I-5') denote the analogues for $D \cdot F_{\{1,2,3\},\{4\},\{5\},\{6\}} $ of the inequalities (I-1)-(I-5).  By an argument similar to that above, the hypotheses $D \cdot F_{\{1,2\}} >0$, $D \cdot F_{\{1,3\}} >0$, and $D \cdot F_{\{2,3\}} >0$ establish (I-1') through (I-4').  Once again, (I-5') remains; $D \cdot F_{\{1,2,3\},\{4\},\{5\},\{6\}} $ could be zero if $\lambda_4 + \lambda_5 + \lambda_6 \leq \ell+1$.

However, if both $\lambda_1 + \lambda_2 + \lambda_3 \leq \ell+1$ and $\lambda_4 + \lambda_5 + \lambda_6 \leq \ell+1$, we will get a contradiction.  We know $\Lambda$ is even.  If $\Lambda \leq 2\ell$, then $D$ is trivial, which contradicts the nonzero intersection numbers assumed as hypotheses.  If $\Lambda=2\ell+2$, then Fakhruddin shows  $D(\sL_2,\ell,\vec{\lambda})$ is a multiple of the pullback of the canonical ample line bundle on the GIT quotient $(\Pro^1)^n \dblq_{\vec{\lambda}} \SL_2$ (\cite{Fakh}*{Theorem 4.5}).  However, we  assumed that $E$ is not a GIT divisor.  Thus we must have either  $\lambda_1 + \lambda_2 + \lambda_3 > \ell+1$ or $\lambda_4 + \lambda_5 + \lambda_6 > \ell+1$ (or both), and hence at least one of the intersection numbers $D \cdot F_{\{4,5,6\},\{1\},\{2\},\{3\}} $ or $D \cdot F_{\{1,2,3\},\{4\},\{5\},\{6\}} $ is nonzero.

Next we consider Pattern II.  We will show that if $D:= D(\sL_2, \ell, \vec{\lambda})$ satisfies 
\begin{displaymath}
\begin{array}{cccl}
D \cdot F_{\{1,3\},\{2,4\},\{5\},\{6\}} & > &  0,\\
D \cdot F_{\{1,5\},\{3,4\},\{5\},\{6\}} & > &  0,\\
D \cdot F_{\{1,2,3\},\{4\},\{5\},\{6\}} & > &  0,
\end{array}
\end{displaymath}
then at least one of the intersection numbers $D \cdot F_{\{2,3,4\},\{1\},\{5\},\{6\}}  $ or $D \cdot F_{\{1,2\},\{3,4\},\{5\},\{6\}} $ is nonzero.  Thus, $D$ cannot be a multiple of any non-Fakhruddin ray $E$.  

We consider the intersection $D \cdot F_{\{2,3,4\},\{1\},\{5\},\{6\}} $ first.  By Lemma \ref{n6 3111 lemma}, this intersection number is nonzero if five inequalities are satisfied; we write these as (I-1'')-(I-5'').  We consider each of these in turn.  (I-1'') follows from $D \cdot F_{\{1,2,3\},\{4\},\{5\},\{6\}} >0$.  (I-2'') follows from  $D \cdot F_{\{1,5\},\{3,4\},\{5\},\{6\}} >0$.  (I-3'') follows from $D \cdot F_{\{1,3\},\{2,4\},\{5\},\{6\}} >0$.  (I-4'') follows from any of these intersection numbers being positive.  Thus only inequality (I-5'') remains.  This inequality is $\ell+2 \leq \lambda_1+\lambda_5+\lambda_6$.

If (I-5'') is satisfied, then $D \cdot F_{\{2,3,4\},\{1\},\{5\},\{6\}}  >0$, and we are done.  So suppose    $\ell+1 \geq \lambda_1+\lambda_5+\lambda_6$.  Then we will show that $D \cdot F_{\{1,2\},\{3,4\},\{5\},\{6\}} >0$ by verifying inequalities  (J-1)-(J-10) in Lemma \ref{n6 2211 lemma}.

The inequality (J-1) follows because $D$ is not trivial.  (J-2) follows because $\lambda_1 + \lambda_5 + \lambda_6 \leq \ell+1$ and $\lambda_i \geq 1 $ for all $i$, hence $\lambda_1 \leq \ell-1$.  (J-3) follows from $D \cdot F_{\{1,5\},\{3,4\},\{5\},\{6\}} >0$.  (J-4) follows from $D \cdot F_{\{1,2,3\},\{4\},\{5\},\{6\}} >0$.  (J-5) follows from $D \cdot F_{\{1,5\},\{3,4\},\{5\},\{6\}} >0$.  (J-6) follows from $D \cdot F_{\{1,3\},\{2,4\},\{5\},\{6\}}>0$.  (J-7) follows from $D \cdot F_{\{1,5\},\{3,4\},\{5\},\{6\}} >0$.  (J-8) follows from $D \cdot F_{\{1,2,3\},\{4\},\{5\},\{6\}} >0$ because we must have $\lambda_4+\lambda_5+\lambda_6 \geq \ell+1$ and $\lambda_3 \geq 1$.  Thus it remains to check inequalities (J-9) and (J-10).

Suppose that (J-9) fails, that is, $\lambda_1 + \lambda_2 \geq \ell-1 + \lambda_5 + \lambda_6$.  From $D \cdot F_{\{1,3\},\{2,4\},\{5\},\{6\}}>0$ we have that $\lambda_1 + \lambda_2 + \lambda_3 +\lambda_4 \leq 2\ell-2+\lambda_5 + \lambda_6$.  We may rewrite and combine these inequalities to obtain
\[  \lambda_1 + \lambda_2 + \lambda_3 +\lambda_4  - 2\ell+2 - \lambda_5 \leq \lambda_6 \leq \lambda_1 + \lambda_2 - \ell+1 -\lambda_5
\]
\[  \Longrightarrow \lambda_3 + \lambda_4 \leq \ell-1.
\]
But from $D \cdot F_{\{1,2,3\},\{4\},\{5\},\{6\}}>0$ we have $2 + \lambda_1 + \lambda_2 \leq \lambda_3 + \lambda_4 + \lambda_5 + \lambda_6$, and we can combine this with $\lambda_1 + \lambda_2 \geq \ell-1 + \lambda_5 + \lambda_6$ to obtain
\[ 2 + \lambda_1 + \lambda_2 - \lambda_3 - \lambda_4 - \lambda_5 \leq \lambda_6 \leq \lambda_1 + \lambda_2 - \ell + 1 - \lambda_5
\]
\[    \Longrightarrow \ell+1 \leq \lambda_3 + \lambda_4.
\]
The contradiction shows that (J-9) must be satisfied.  

Next we check (J-10).  Suppose that (J-10) fails, that is, $\lambda_3 + \lambda_4 \geq \ell-1 + \lambda_5 + \lambda_6$.  As above, we get $\lambda_1 + \lambda_2 \leq \ell-1$.  From  $D \cdot F_{\{1,5\},\{3,4\},\{2\},\{6\}}>0$ we have $2 + \lambda_3 + \lambda_4 \leq \lambda_1 + \lambda_2 + \lambda_5 + \lambda_6$, and combining this with the first inequality yields
\[ 2 + \lambda_3 + \lambda_4 - \lambda_1 - \lambda_2 - \lambda_5 \leq \lambda_6 \leq \lambda_1 + \lambda_2 - \ell + 1 - \lambda_5
\]
\[    \Longrightarrow \ell+1 \leq \lambda_1 + \lambda_2.
\]
The contradiction shows that (J-10) must be satisfied.  

Thus, since all ten inequalities (J-1)--(J-10) are satisfied, we may apply Lemma \ref{n6 2211 lemma} to conclude that $D \cdot F_{\{1,2\},\{3,4\},\{5\},\{6\}}>0$.

\end{proof}

\section{$\sL_2$ conformal blocks don't cover the symmetric nef cone for $n=9$} \label{dont cover 9 section}

The main result of this section is that for $n=9$, the divisor $B_2+B_3+2B_4$ is not a multiple of a symmetrized conformal block divisor for $\mathfrak{g}=\sL_2$, which we prove in Theorem \ref{no 112}.

\begin{lemma}  \label{lambda_i > 0} Suppose $D(\mathfrak{g},\ell,\vec{\lambda})$ is $S_n$-symmetric and $D \not\equiv 0$.  Then $\lambda_i > 0$ for all $i=1,\ldots,n$.  
\end{lemma}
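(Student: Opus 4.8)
The claim is that an $S_n$-symmetric, nonzero conformal block divisor $D(\mathfrak{g},\ell,\vec\lambda)$ must have all weights $\lambda_i$ strictly positive. The plan is to argue by contrapositive: suppose some $\lambda_i = 0$. By the symmetry hypothesis, the divisor class $D(\mathfrak{g},\ell,\vec\lambda)$ is invariant under the $S_n$-action permuting the marked points, so without loss of generality I may take $\lambda_n = 0$ (or any index I like), and in fact I want to exploit the invariance more carefully: I will show that $\vec\lambda$ being $S_n$-symmetric as a \emph{divisor class} forces strong constraints once one weight vanishes.

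First I would invoke propagation of conformal blocks: if $\lambda_n = 0$, then $D(\mathfrak{g},\ell,(\lambda_1,\dots,\lambda_{n-1},0))$ is the pullback under the forgetful map $\pi\colon\M_{0,n}\to\M_{0,n-1}$ of $D(\mathfrak{g},\ell,(\lambda_1,\dots,\lambda_{n-1}))$. This is the same fact used in the proof of the lemma just above (the $n=6$ case), so I may quote it. In particular $D = \pi^{*}D'$ for some class $D'$ on $\M_{0,n-1}$, and therefore $D$ is not big: its top self-intersection vanishes, or equivalently $D$ lies on the boundary of the nef cone and pairs to zero with the F-curve $F = F_{\{i,n\}}$ contracted by $\pi$ for a suitable $i$ — more precisely $D\cdot F_{I_1,\dots,I_4} = 0$ whenever one of the parts is $\{n\}$ together with exactly one other index. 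The key point is simply that $D$ fails to be big.

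Next I would use the $S_n$-symmetry to upgrade this. Since $D$ is $S_n$-invariant and $D = \pi_n^{*}D'$ where $\pi_n$ forgets the $n$-th point, applying an arbitrary $\sigma \in S_n$ shows $D = \pi_{\sigma(n)}^{*}(\text{something})$ for every index; hence $D$ is a pullback from $\M_{0,n-1}$ along \emph{every} forgetful map. A class pulled back along all $n$ forgetful maps must in fact be trivial: the intersection of the kernels of $\pi_j^{*}\colon \operatorname{Pic}(\M_{0,n})\to$ (image) — or rather, the statement that $D$ is numerically a pullback from each $\M_{0,n-1}^{(j)}$ forces $D\cdot F = 0$ for \emph{all} F-curve classes $F$ (every F-curve is contracted by some forgetful map, since any F-curve $F_{I_1,I_2,I_3,I_4}$ with some $|I_j|\ge 2$ is contracted by a map forgetting a point in that part, and the remaining F-curves of type $F_{\{a\},\{b\},\{c\},\{d,\dots\}}$ are likewise handled). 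Since F-curves span the space of curve classes on $\M_{0,n}$ (they generate the Mori cone, and in particular span $N_1$), a nef divisor pairing to zero with all of them is numerically trivial, i.e. $D \equiv 0$, contradicting the hypothesis.

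The step I expect to be the main obstacle is the middle one: making precise and correctly justified the claim that symmetry plus "pullback along one forgetful map" implies "pullback along every forgetful map" at the level of divisor classes, and then that the latter implies numerical triviality. The cleanest route is probably to avoid talking about pullbacks of classes altogether and work directly with intersection numbers: from $\lambda_n=0$ and propagation, $D\cdot F_{\{i,n\}\cup\emptyset,\dots}=0$ for the relevant F-curves involving $n$; by $S_n$-invariance of $D$, the same vanishing holds with $n$ replaced by any index; one then checks combinatorially that every F-curve class $F_{I_1,I_2,I_3,I_4}$ is $S_n$-equivalent to one of the F-curves shown to pair to zero with $D$ (e.g. by reindexing so that the $\{i,n\}$-type configuration matches), whence $D$ pairs to zero with every F-curve, and since F-curves span $N_1(\M_{0,n})_{\R}$ and $D$ is nef, $D\equiv 0$. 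I would write out only this intersection-number version, as it uses nothing beyond propagation, the $S_n$-symmetry hypothesis, and the spanning property of F-curves, all of which are standard and available here.
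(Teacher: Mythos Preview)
Your overall strategy---propagation plus $S_n$-symmetry plus pairing against F-curves---is exactly the paper's approach. However, there is a concrete error in the execution that leaves a real gap.

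You have the contraction behavior of F-curves under forgetful maps backwards. The forgetful map $\pi_j$ contracts $F_{I_1,I_2,I_3,I_4}$ precisely when $\{j\}$ is a \emph{singleton} part; if $j$ lies in a part $I_k$ with $|I_k|\ge 2$, then $\pi_j$ sends this F-curve isomorphically onto the F-curve $F_{I_1,\dots,I_k\setminus\{j\},\dots,I_4}$ in $\M_{0,n-1}$ and does \emph{not} contract it. So from $D=\pi_j^*D_j$ you get $D\cdot F=0$ only for F-curves having $\{j\}$ as a part, and after invoking symmetry, only for F-curves with \emph{some} singleton part. Your assertion that ``every F-curve is contracted by some forgetful map'' is therefore false: for $n\ge 8$ there are F-curves with all parts of size $\ge 2$ (e.g.\ shape $(3,2,2,2)$ when $n=9$), and none of these is contracted by any $\pi_j$. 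Consequently your argument does not establish $D\cdot F=0$ for all F-curves, and the appeal to ``F-curves span $N_1$'' does not close the proof.

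The paper repairs this with one extra observation: because $D$ is $S_n$-symmetric, it suffices to show $D$ pairs to zero with a basis of $H_2(\M_{0,n},\Q)^{S_n}$, and such a basis is given by the symmetrized F-curves of shape $(n-i,i,1,1)$ (cited there as \cite{AGSS}*{Prop.~4.1}). All of these have singleton parts, so the vanishing you \emph{do} correctly obtain---$D\cdot F=0$ whenever $F$ has a singleton part---is already enough to conclude $D\equiv 0$. Adding this one sentence fixes your proof and makes it essentially identical to the paper's.
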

\begin{proof}
Suppose that $\lambda_j = 0$.   Then we will show that $D \equiv 0$.  

Recall that Fakhruddin's intersection number with F-curves is 
\begin{displaymath} D \cdot F_{I_1,I_2,I_3,I_4} = \sum_{\vec{\mu} \in P_{\ell}^{4}} \deg \mathbb{V}_{\vec{\mu}} \, r_{\lambda_{I_1} \mu_{1}^{*}}  r_{\lambda_{I_2} \mu_{2}^{*}}  r_{\lambda_{I_3} \mu_{3}^{*}}  r_{\lambda_{I_4} \mu_{4}^{*}}.  
\end{displaymath}
Let $I_4 = \{\lambda_j\}$.  Then by fusion rules (see Prop. \ref{fusion rules}), to have $r_{\lambda_{I_4} \mu_{4}^{*}}\neq 0$, we must have $\mu_{4}^{*} = 0 = \mu_{4}$.  But then by propagation $\mathbb{V}_{\vec{\mu}}$ is a pullback from $\M_{0,3} \cong pt$, and hence $\deg \mathbb{V}_{\vec{\mu}} = 0$.  Thus $D \cdot F_{I_1,I_2,I_3,I_4} = 0$ whenever $I_4=\{\lambda_j\}$.

We assumed that $D(\vec{\lambda})$ is $S_n$-symmetric, and hence
$D\cdot F_{I_1,I_2,I_3,I_4} = 0$ whenever $\#I_4 =1$.  But it is
well-known that curves of the form $\{ F_{n-i,i,1,1} \}$ form a basis
of $H_2(\M_{0,n},\Q)^{S_n}$ (see for instance \cite{AGSS}*{Prop. 4.1}).  Hence $D \equiv 0$.    
\end{proof}

\begin{lemma} \label{112 extremal in nef cone} The divisor $E:=B_2 + B_3 + 2B_4$ spans an extremal ray of both $\Nef(\M_{0,9})$ and $\Nef(\M_{0,9})^{S_9}$.
\end{lemma}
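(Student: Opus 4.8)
\textbf{Proof plan for Lemma \ref{112 extremal in nef cone}.}

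The plan is to work directly with the cross-section picture already displayed in the introduction. Recall that $\Nef(\M_{0,9})^{S_9}$ is a three-dimensional polyhedral cone whose cross-section is the quadrilateral shown, with vertices labeled $B_2+B_3+2B_4$, $B_2+3B_3+6B_4$, $B_2+3B_3+2B_4$, $3B_2+3B_3+4B_4$, and whose four facets are cut out by the F-curve equations $D\cdot F_{6,1,1,1}=0$, $D\cdot F_{5,2,1,1}=0$, $D\cdot F_{4,2,2,1}=0$, $D\cdot F_{3,2,2,2}=0$. First I would verify that $E:=B_2+B_3+2B_4$ really is one of the four extremal rays of $\Nef(\M_{0,9})^{S_9}$: the facets through $E$ in the picture are $D\cdot F_{6,1,1,1}=0$ and $D\cdot F_{5,2,1,1}=0$, so I would compute $E\cdot F_{6,1,1,1}$ and $E\cdot F_{5,2,1,1}$ (using Fakhruddin's F-curve intersection formulas, or equivalently the known expressions for $B_j\cdot F_{a,b,c,d}$ from \cite{AGSS} / \cite{KeelMcKernanContractible}), check both vanish, and check that the remaining two facet functionals $E\cdot F_{4,2,2,1}$ and $E\cdot F_{3,2,2,2}$ are strictly positive. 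Since $\Pic(\M_{0,9})^{S_9}\otimes\R$ is three-dimensional and the two vanishing F-curve functionals are linearly independent on it, their common kernel is a single ray, which must be $E$; positivity against the other two facet functionals (together with positivity against all other F-curves, which follows because $E$ lies in the nef cone) confirms $E$ is a genuine vertex of the symmetric nef cone and not an interior point of a facet.

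Next I would upgrade this to extremality in the full cone $\Nef(\M_{0,9})$. The cleanest route is: an $S_9$-invariant nef divisor $D$ spans an extremal ray of $\Nef(\M_{0,9})$ as soon as the set of effective curve classes $\gamma$ with $D\cdot\gamma=0$ spans a hyperplane in $N_1(\M_{0,9})$, i.e. $D$ lies on enough facets of the nef cone. Concretely I would exhibit $S_9$-orbits of F-curves $F_{I_1,I_2,I_3,I_4}$ on which $E$ vanishes, and show these span a codimension-one subspace of $N_1(\M_{0,9})_\R$ (which has dimension $\binom{9}{2}-9 = 27$, or rather the relevant Picard number). The vanishing is controlled by the formula (\ref{Fakh Fcurve formula}) together with the combinatorial rank formulas; in fact $E\cdot F_{I_1,I_2,I_3,I_4}$ depends only on the partition type $(|I_1|,|I_2|,|I_3|,|I_4|)$ since $E$ is symmetric, so it suffices to determine, for each of the finitely many partition types of $9$ into four parts, whether $E$ meets that F-curve in zero. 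Partition types $6{+}1{+}1{+}1$ and $5{+}2{+}1{+}1$ give zero by the previous paragraph; I would check whether any further types do as well, then count dimensions. (Alternatively, one can cite that $E=B_2+B_3+2B_4$ appears in Faber's / known tabulations of extremal rays of $\Nef(\M_{0,9})$, but giving the F-curve argument keeps the proof self-contained.)

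The main obstacle I anticipate is the dimension count in the full (non-symmetric) cone: I must be sure that the F-curves orthogonal to $E$ actually span a hyperplane in $N_1(\M_{0,9})$, not merely a hyperplane in the symmetric subspace. This requires knowing the span of the relevant F-curve orbits in homology, which is a linear-algebra computation over $\Q$ of moderate size; I would do it by computer (recording the matrix of F-curve classes in the boundary basis $\{\delta_I\}$ and checking its rank) and post the verification alongside the other computational notes. Everything else is routine: the F-curve intersection numbers against the $B_j$ are classical, and the sign checks $E\cdot F_{4,2,2,1}>0$, $E\cdot F_{3,2,2,2}>0$ are immediate from those formulas. If the homology span turns out to be only $26$-dimensional for some annoying reason, the fallback is to add one more orthogonal extremal effective curve (e.g. a suitable F-curve or a component of a $1$-stratum not of F-type) to complete the hyperplane, or simply to invoke extremality of $E$ in $\Nef(\M_{0,9})$ from the published classification.
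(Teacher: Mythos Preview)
Your overall strategy matches the paper's: show extremality in the full nef cone by exhibiting enough F-curves orthogonal to $E$ and checking their span by computer. The paper does exactly this, noting that $\rho(\M_{0,9})=2^{8}-\binom{9}{2}-1=219$, listing all F-curves with $E\cdot F=0$, and verifying by a rank computation that they span a $218$-dimensional subspace of $N_1$. Extremality in the symmetric subcone then follows immediately, since a symmetric ray extremal in the big cone is certainly extremal in the invariant subcone.

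There are, however, two concrete errors in your plan that you should fix before executing it. First, the Picard number of $\M_{0,9}$ is $219$, not $27$; your expression $\binom{9}{2}-9$ is not the right formula (see \cite{KeelIntersection}). So you are looking for $218$ independent F-curves in the orthogonal, not $26$, and the ``fallback'' scenario you describe is off by an order of magnitude. Second, you have misread the quadrilateral: the vertex $E=B_2+B_3+2B_4$ lies on the two facets $D\cdot F_{5,2,1,1}=0$ and $D\cdot F_{4,2,2,1}=0$, \emph{not} on $D\cdot F_{6,1,1,1}=0$. Indeed $E\cdot F_{6,1,1,1}>0$ is the crucial fact driving the entire argument of Section~\ref{dont cover 9 section} (see the proof of Theorem~\ref{no 112}). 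So when you carry out the symmetric-cone check, the two vanishing functionals should be $F_{5,2,1,1}$ and $F_{4,2,2,1}$, with $F_{6,1,1,1}$ and $F_{3,2,2,2}$ strictly positive. Once those two points are corrected, your plan is the same as the paper's and goes through.
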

\begin{proof}
We have $\rho = \dim \Pic(\M_{0,n}) = 2^{n-1} - \binom{n}{2} -1$ by
\cite{KeelIntersection}.  When $n=9$, we have $\rho = 219$.
Therefore, it suffices to find 218 curves that are independent in
homology and satisfy $E \cdot C=0$.  This can be done by computer.

First, make a list of all the F-curves $F_{I_1,I_2,I_3,I_4} $ such that $E \cdot F_{I_1,I_2,I_3,I_4} = 0$.  Then we can check that these curves form a family of rank 218 by forming the matrix of intersection numbers $D_J \cdot F_{I_1,I_2,I_3,I_4} $ and checking that this matrix has rank 218.  To make the calculation slightly more efficient, we can intersect only with a set of divisors $D_J$ forming a basis of $\Pic(\M_{0,n})$.

I have posted my \texttt{Macaulay2} code for this calculation on my website: \\ \neturltilde{http://www.math.uga.edu/~davids/dontcover/}{http://www.math.uga.edu/$\sim$davids/dontcover/}.
\end{proof}

\begin{definition} \label{def of I}
We define a system of inequalities $\mathcal{I}$ as follows:
\begin{enumerate}
\item[$(\mathcal{I}\mbox{-}1)$]  $\lambda_1 +\lambda_2 + \lambda_3 +\lambda_4 +\lambda_8 +\lambda_9 \leq 4\ell-2 +\lambda_5  +\lambda_6 +\lambda_7$
\item[$(\mathcal{I}\mbox{-}2)$] $\lambda_1 + \lambda_2 +\lambda_8 +\lambda_9 -(2\ell-2) \leq  \lambda_3 +\lambda_4 +\lambda_5+\lambda_6 +\lambda_7$
\item[$(\mathcal{I}\mbox{-}3)$] $ 2+\lambda_8 +\lambda_9  \leq  \lambda_1 + \lambda_2+ \lambda_3 +\lambda_4 +\lambda_5+\lambda_6 +\lambda_7$
\item[$(\mathcal{I}\mbox{-}4)$] $\ell+2 \leq \lambda_6 +\lambda_7+\lambda_8
  +\lambda_9$ 
\item[$(\mathcal{I}\mbox{-}5)$] $\lambda_1+ \lambda_2 + \lambda_3 +\lambda_4  -(2\ell-2) \leq  \lambda_5 +\lambda_6 +\lambda_7+\lambda_8 +\lambda_9$
\item[$(\mathcal{I}\mbox{-}6)$] $2 + \lambda_1 + \lambda_2 \leq \lambda_3 +\lambda_4 +\lambda_5+\lambda_6 +\lambda_7+\lambda_8 +\lambda_9$
\item[$(\mathcal{I}\mbox{-}7)$] $2\ell+2\leq  \Lambda = \lambda_1 + \lambda_2+ \lambda_3 +\lambda_4 +\lambda_5+\lambda_6 +\lambda_7+\lambda_8 +\lambda_9$
\item[$(\mathcal{I}\mbox{-}8)$] $2 + \lambda_1+\lambda_2+\lambda_3+\lambda_4 \leq 3\ell + \lambda_5+\lambda_6+\lambda_7$ 
\item[$(\mathcal{I}\mbox{-}9)$] $2+\lambda_1+\lambda_2 \leq \ell +\lambda_3+\lambda_4+ \lambda_5+\lambda_6+\lambda_7$ 
\item[$(\mathcal{I}\mbox{-}10)$]  $\ell+2 \leq \lambda_1 + \lambda_2+ \lambda_3 +\lambda_4 +\lambda_5+\lambda_6 +\lambda_7$ 
\end{enumerate}
\end{definition}

\begin{lemma} \label{I lemma} Suppose $\vec{\lambda}$ satisfies 
\begin{enumerate}
\item[i.] $\lambda_i \geq 1$ for $1\leq i \leq 5$
\item[ii.] $\lambda_i \leq \ell-1$ for $2 \leq i \leq 9$
\item[iii.] $2 \leq \lambda_i$ for $6 \leq i \leq 9$
\item[iv.] $\lambda_7 \geq \lambda_8 \geq \lambda_9$
\item[v.] $r_{\vec{\lambda}} \neq 0$.
\end{enumerate}
Then $D(\vec{\lambda}) \cdot F_{\{1,2,3,4,5\},\{6\},\{7\},\{8,9\}} > 0$ if and only if $\vec{\lambda}$ satisfies the system of inequalities $\mathcal{I}$. 
\end{lemma}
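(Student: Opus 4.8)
The plan is to prove this exactly as Lemmas \ref{n6 3111 lemma} and \ref{n6 2211 lemma} were proved, via Proposition \ref{polytope Q}. Write $I_1=\{1,2,3,4,5\}$, $I_2=\{6\}$, $I_3=\{7\}$, $I_4=\{8,9\}$, and let $\mathcal{Q}=\mathcal{Q}(\ell,\vec\lambda,I_1,I_2,I_3,I_4)\subset\R^4$ be the polytope of Definition \ref{polytope Q def} in coordinates $\mu_1,\dots,\mu_4$. First I would observe that because $I_2$ and $I_3$ are singletons, the Rasmussen--Walton inequalities attached to $r_{\lambda_{I_2}\mu_2^{*}}\ne 0$ and $r_{\lambda_{I_3}\mu_3^{*}}\ne 0$ collapse to the $\sL_2$ fusion rules of Proposition \ref{fusion rules}(2), forcing $\mu_2=\lambda_6$ and $\mu_3=\lambda_7$. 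Hence $\mathcal{Q}$ lies in the plane $\{\mu_2=\lambda_6,\ \mu_3=\lambda_7\}$, and by Proposition \ref{polytope Q} it suffices to decide whether the two-dimensional slice of $\mathcal{Q}$ in the remaining coordinates $(\mu_1,\mu_4)$ contains an integral point with $\mu_1\equiv\lambda_1+\lambda_2+\lambda_3+\lambda_4+\lambda_5\pmod 2$ and $\mu_4\equiv\lambda_8+\lambda_9\pmod 2$.

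Next, as in the earlier lemmas, I would note that every inequality defining $\mathcal{Q}$ already forces the correct parity on $\mu_1$ and on $\mu_4$ when it is made an equality, the only exceptions being the box constraints $0\le\mu_1\le\ell$ and $0\le\mu_4\le\ell$. Sharpening those four bounds to the nearest integers respecting the prescribed parities produces an adjusted polytope $\mathcal{Q}'$ with the property that $\mathcal{Q}'$ contains a point of the required parity if and only if $\mathcal{Q}'$ is nonempty; equivalently, $D(\vec\lambda)\cdot F_{I_1,I_2,I_3,I_4}>0$ if and only if $\mathcal{Q}'\ne\emptyset$. Hypotheses (i)--(iv) are precisely what is needed to write these adjusted bounds cleanly (and, at the end, to recognize which consequences are redundant), while hypothesis (v), $r_{\vec\lambda}\ne 0$, is the standing assumption that lets us invoke Proposition \ref{polytope Q} in the first place.

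The final step is to eliminate the two surviving variables $\mu_1,\mu_4$ from the defining system of $\mathcal{Q}'$ by Fourier--Motzkin elimination and then discard the inequalities that follow from the others together with (i)--(iv). I expect the list to reduce exactly to the ten inequalities $(\mathcal{I}\text{-}1)$--$(\mathcal{I}\text{-}10)$ of Definition \ref{def of I}, so that $\mathcal{Q}'\ne\emptyset$ if and only if $\vec\lambda$ satisfies $\mathcal{I}$, which is the lemma. I anticipate that this last step is the only real labor and the main obstacle: there are about a dozen inequalities to begin with (the four box constraints, the Rasmussen--Walton systems for the six-pointed block $\lambda_{I_1}\mu_1^{*}$ and the three-pointed block $\lambda_{I_4}\mu_4^{*}$, the Rasmussen--Walton system for $r_{\vec\mu}\ne 0$, and $\mu_1+\mu_2+\mu_3+\mu_4\ge 2\ell+2$); eliminating two variables multiplies these considerably, and almost all of the resulting candidates turn out to be redundant, so the bookkeeping has to be done carefully by hand. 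As with Lemmas \ref{n6 3111 lemma} and \ref{n6 2211 lemma}, I would carry out this calculation and post the notes on my website at \neturltilde{http://www.math.uga.edu/~davids/dontcover/}{http://www.math.uga.edu/$\sim$davids/dontcover/}.
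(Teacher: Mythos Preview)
Your proposal is correct and follows essentially the same approach as the paper: reduce to a two-variable polytope via the singleton legs, adjust the box constraints for parity to pass to $\mathcal{Q}'$, and then Fourier--Motzkin eliminate $\mu_1,\mu_4$ to arrive at $\mathcal{I}$. One small refinement: hypothesis (v) is used not only to invoke Proposition~\ref{polytope Q} but also, via Lemma~\ref{nonzero rank lemma}, to discard many of the inequalities produced by the elimination (the paper gives the explicit example where $(\alpha\text{-LB }3,\alpha\text{-UB }5)$ reduces to a Rasmussen--Walton inequality for $r_{\vec\lambda}\ne 0$), and the paper carries out the elimination in enough detail to list the $22$ starting inequalities and the intermediate $\alpha$-bounds rather than deferring the entire computation.
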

\begin{proof}
We use Proposition \ref{polytope Q}.

Since the sets $I_2$ and $I_3$ in the partition are singletons, the Rasmussen-Walton inequalities associated $r_{I_j \mu_j^{*}} \neq 0$ are just the fusion rules (see Prop. \ref{fusion rules}) for $j=2,3$.  In particular, to have $r_{\lambda_{6} \mu_{2}^{*}}\neq 0$, we must have $\mu_{2}^{*} = \lambda_6 = \mu_{2}$, and similarly $\mu_{3}^{*} = \lambda_7 = \mu_{3}$.

Write $\alpha= \mu_1$, $\beta=\mu_4$.  Thus, we only need to prove that $\mathcal{Q}$ contains a point of the form $(\alpha,\lambda_6,\lambda_7,\beta)$ where $\alpha \equiv \lambda_1+\cdots + \lambda_5 \pmod{2}$ and $\beta \equiv \lambda_8+\lambda_9 \pmod{2}$.  For this, we note that almost all the inequalities defining $\mathcal{Q}$ give the correct parity for $\mu$ when made equalities.  The exceptions are the conditions $0 \leq \alpha, \beta \leq \ell$.  Thus, by adjusting the upper and lower bounds for $\alpha, \beta$, we can arrange that if $\mathcal{Q}$ is
nonempty, it contains a point with the desired parity.  Specifically, if $\lambda_1 + \cdots + \lambda_5 \equiv 0 \pmod{2}$ we require $\alpha \geq 2$ else $\alpha \geq 1$.  If $\lambda_1 + \cdots +
\lambda_5 \equiv \ell \pmod{2}$ we require $\alpha \leq \ell$ else
$\alpha \leq \ell-1$.  Similarly if $\lambda_8 + \lambda_9 \equiv 0
\pmod{2}$ we require $\beta \geq 2$ else $\beta \geq 1$.  If $\lambda_8 + \lambda_9 \equiv \ell \pmod{2}$ we require $\beta\leq \ell$ else
$\beta \leq \ell-1$.  Note that if $\alpha$ and $\beta$ have the
desired parities, then since $\Lambda$ is even, $\alpha+\beta$ will
have the desired parity.  Let $\mathcal{Q}'$ be the polytope with these adjusted bounds.  
   
In the rest of the proof, I tediously explain how the inequalities defining
$\mathcal{I} $ are obtained from the inequalities defining
$\mathcal{Q}'$ by applying Fourier-Motzkin elimination to get rid of
$\alpha$ and $\beta$ and then discarding inequalities which 
follow from  the hypotheses above.

We consider the inequalities defining $\mathcal{Q}'$.  We have the four
parity-dependent upper and lower bounds for $\alpha$ and $\beta$.
By Lemma \ref{nonzero rank lemma}, the condition
$r_{\alpha\beta\lambda_{6}\lambda_{7}} \neq 0$ gives rise to eight Rasmussen-Walton 
inequalities (four with $|I|=1$, four with $|I|=3$).  By Lemma
\ref{boris lemma}, the condition $\deg
\mathbb{V}_{\alpha\beta\lambda_{6}\lambda_{7}} > 0$ gives rise to one
additional inequality: $\alpha + \beta + \lambda_6 + \lambda_7 \geq
2\ell+2$.  By Lemma \ref{nonzero rank lemma}, the condition $r_{\lambda_{1}\cdots\lambda_{5} \alpha^{*}}
\neq 0$ gives rise to 32 Rasmussen-Walton inequalities (six with $|I|=1$, 20 with
$|I|$=3, and six with $|I|$=5).  However, we may assume that
$\lambda_1 \geq \lambda_2 \geq \cdots \geq \lambda_5$; and then we
only need 6 of these 32 inequalities.  Finally, by Lemma \ref{nonzero
  rank lemma}, the condition $r_{\lambda_{8} \lambda_{9} \beta^{*}}
\neq 0$ gives rise to four Rasmussen-Walton inequalities (1 with $I = \emptyset$, three
with $|I|=2)$; but we may assume $\lambda_8 \geq \lambda_9$ and drop
one of these.  Thus, we get 22 inequalities: 7 lower
bounds for $\beta$, 7 upper bounds for $\beta$, and 8 which do not
involve $\beta$.  I have posted my notes for this calculation on my website:  \neturltilde{http://www.math.uga.edu/~davids/dontcover/}{http://www.math.uga.edu/$\sim$davids/dontcover/}. 

\begin{enumerate}
\item[($\beta$-LB 1)] $2 \ell +2 - \alpha -\lambda_6 - \lambda_7 \leq
  \beta$
\item[($\beta$-LB 2)] $-2 \ell + \alpha +\lambda_6 + \lambda_7 \leq
  \beta$
\item[($\beta$-LB 3)] $\alpha -\lambda_6 - \lambda_7 \leq
  \beta$
\item[($\beta$-LB 4)] $- \alpha +\lambda_6 - \lambda_7 \leq
  \beta$
\item[($\beta$-LB 5)] $- \alpha -\lambda_6 + \lambda_7 \leq
  \beta$
\item[($\beta$-LB 6)] $\lambda_8 - \lambda_9 \leq
  \beta$
\item[($\beta$-LB 7)] $\begin{array}{lc} 
\mbox{If $\lambda_8+\lambda_9 \equiv 0 \pmod{2}$: } & 2 \leq \beta \\
\mbox{If $\lambda_8+\lambda_9 \equiv 1 \pmod{2}$: } & 1 \leq \beta \\
\end{array}$ 
\item[($\beta$-UB 1)] $\beta \leq \alpha + 2 \ell - \lambda_6 - \lambda_7$
\item[($\beta$-UB 2)] $\beta \leq -\alpha + 2 \ell + \lambda_6 -
  \lambda_7$
\item[($\beta$-UB 3)] $\beta \leq -\alpha + 2 \ell - \lambda_6 +
  \lambda_7$
\item[($\beta$-UB 4)] $\beta \leq \alpha +  \lambda_6 +
  \lambda_7$
\item[($\beta$-UB 5)] $\beta \leq \lambda_8 + \lambda_9$
\item[($\beta$-UB 6)] $\beta \leq 2\ell - \lambda_8 -
  \lambda_9$
\item[($\beta$-UB 7)] $\begin{array}{lc} 
\mbox{If $\lambda_8+\lambda_9 \equiv \ell \pmod{2}$: } & \beta \leq \ell \\
\mbox{If $\lambda_8+\lambda_9 \equiv \ell-1 \pmod{2}$: } & \beta \leq \ell-1\\
\end{array}$ 
\end{enumerate}

We eliminate $\beta$ by testing each pair $(\mbox{$\beta$-LB $i$},
\mbox{$\beta$-UB $j$})$. Many of the inequalities so obtained follow
easily from our hypotheses, or from another bound.  For instance,
consider the first such pair, $(\mbox{$\beta$-LB 1},
\mbox{$\beta$-UB 1})$, created from the  first lower bound and first
upper bound: $2 \ell + 2 - \alpha - \lambda_6 - \lambda_7 \leq \alpha
+ 2 \ell - \lambda_6 - \lambda_7$ is equivalent to $\alpha \geq 1$, which was already on our list of inequalities for $\alpha$.  

After eliminating $\beta$ in this way, we are left with 11 lower bounds for $\alpha$ and 8
upper bounds for $\alpha$.  

\begin{enumerate}
\item[($\alpha$-LB 1)]  $2 - \lambda_6 -
  \lambda_7+ \lambda_8 + \lambda_9 \leq \alpha$
\item[($\alpha$-LB 2)]  $2\ell+2 - \lambda_6 -
  \lambda_7- \lambda_8 -\lambda_9  \leq \alpha$
\item[($\alpha$-LB 3)]  $\lambda_6 -
  \lambda_7- \lambda_8 -\lambda_9  \leq \alpha$
\item[($\alpha$-LB 4)]  $ -\lambda_6 +
  \lambda_7+\lambda_8 +\lambda_9 - 2 \ell \leq \alpha$
\item[($\alpha$-LB 5)]  $-\lambda_6 +
  \lambda_7- \lambda_8 -\lambda_9  \leq \alpha$
\item[($\alpha$-LB 6)]   $\lambda_6 +
  \lambda_7+ \lambda_8 -\lambda_9 - 2\ell  \leq \alpha$
\item[($\alpha$-LB 7)]
  $\lambda_1+\lambda_2+\lambda_3+\lambda_4+\lambda_5 - 4 \ell \leq \alpha$
\item[($\alpha$-LB 8)]   $\lambda_1+\lambda_2+\lambda_3-\lambda_4-\lambda_5 - 2 \ell \leq \alpha$
\item[($\alpha$-LB 9)]   $\lambda_1-\lambda_2-\lambda_3-\lambda_4-\lambda_5  \leq \alpha$
\item[($\alpha$-LB 10)]  $\begin{array}{lc} 
\mbox{If $\lambda_1+\cdots+\lambda_5 \equiv 0 \pmod{2}$: } & 2 \leq \alpha \\
\mbox{If $\lambda_1+\cdots+\lambda_5 \equiv 1 \pmod{2}$: } & 1 \leq \alpha\\
\end{array}$ 
\item[($\alpha$-LB 11)]  
$\begin{array}{lc} 
\mbox{If $\lambda_8+\lambda_9 \equiv \ell \pmod{2}$: } & \ell+2 -
\lambda_6-\lambda_7 \leq \alpha \\
\mbox{If $\lambda_8+\lambda_9 \equiv \ell-1  \pmod{2}$: } & \ell+3 -
\lambda_6-\lambda_7 \leq \alpha\\
\end{array}$ 
\item[($\alpha$-UB 1)]   $\alpha \leq 4 \ell -
  \lambda_6-\lambda_7-\lambda_8-\lambda_9$
\item[($\alpha$-UB 2)]   $\alpha \leq 2 \ell -
  \lambda_6-\lambda_7+\lambda_8+\lambda_9$
\item[($\alpha$-UB 3)]   $\alpha \leq 
  \lambda_6+\lambda_7+\lambda_8+\lambda_9$
\item[($\alpha$-UB 4)]   $\alpha \leq 2 \ell +
  \lambda_6-\lambda_7-\lambda_8+\lambda_9$
\item[($\alpha$-UB 5)]   $\alpha \leq 4 \ell - \lambda_1
 - \lambda_2-\lambda_3-\lambda_4+\lambda_5$
\item[($\alpha$-UB 6)]   $\alpha \leq 2 \ell -
  \lambda_1-\lambda_2+\lambda_3+\lambda_4+\lambda_5$
\item[($\alpha$-UB 7)]   $\alpha \leq 
  \lambda_1+\lambda_2+\lambda_3+\lambda_4+\lambda_5$
\item[($\alpha$-UB 8)]   $\begin{array}{lc} 
\mbox{If $\lambda_1+\cdots+\lambda_5 \equiv \ell \pmod{2}$: } & \alpha
\leq \ell \\
\mbox{If $\lambda_1+\cdots+\lambda_5 \equiv \ell-1 \pmod{2}$: } &
\alpha \leq \ell-1\\
\end{array}$ 
\end{enumerate}

We eliminate $\alpha$ by testing each pair $(\mbox{$\alpha$-LB $i$},
\mbox{$\alpha$-UB $j$})$. Many of the inequalities so obtained follow easily from our
hypotheses, or from another bound.  For instance, consider $(\mbox{$\alpha$-LB 3},
\mbox{$\alpha$-UB 5})$:
\begin{eqnarray*}
\lambda_6 -  \lambda_7- \lambda_8 -\lambda_9 & \leq &  4 \ell +
\lambda_1 - \lambda_2-\lambda_3-\lambda_4-\lambda_5 \\ 
\lambda_2 + \lambda_3 + \lambda_4 + \lambda_5 + \lambda_6 - 4 \ell
& \leq &  \lambda_1 + \lambda_7 + \lambda_8 + \lambda_9,
\end{eqnarray*}
and we recognize this last inequality as following from
$r_{\vec{\lambda}} \neq 0$ by Lemma \ref{nonzero rank lemma} with $I =
\{ 1,7,8,9\}$.  

We are nearly done.  We are left with ten inequalities coming from the pairs $(\mbox{$\alpha$-LB 1},\mbox{$\alpha$-UB 5})$, $(\mbox{$\alpha$-LB 1},
\mbox{$\alpha$-UB 6})$, $(\mbox{$\alpha$-LB 1}, \mbox{$\alpha$-UB
  7})$, $(\mbox{$\alpha$-LB 2}, \mbox{$\alpha$-UB 8})$, $(\mbox{$\alpha$-LB 2},
\mbox{$\alpha$-UB 5})$, $(\mbox{$\alpha$-LB 2}, \mbox{$\alpha$-UB 6})$, \\ $(\mbox{$\alpha$-LB 2},\mbox{$\alpha$-UB 7})$, $(\mbox{$\alpha$-LB 11},
\mbox{$\alpha$-UB 5})$, $(\mbox{$\alpha$-LB 11}, \mbox{$\alpha$-UB 6})$, and $(\mbox{$\alpha$-LB 11},
\mbox{$\alpha$-UB 7})$.

Finally, we note that the parity condition drops out.  For instance, consider the pair $(\mbox{$\alpha$-LB 2}, \mbox{$\alpha$-UB 8})$.  This gives 
\begin{displaymath}
\begin{array}{lc} 
\mbox{If $\lambda_1+\cdots+\lambda_5 \equiv \ell \pmod{2}$: } & \ell+2 \leq \lambda_6 +\lambda_7+\lambda_8
  +\lambda_9;\\
\mbox{If $\lambda_1+\cdots +\lambda_5 \equiv \ell -1\pmod{2}$: } &
\ell+3 \leq \lambda_6 +\lambda_7+\lambda_8 +\lambda_9.
\end{array}
\end{displaymath}

However, in the second line, if $\lambda_1+\lambda_2+\lambda_3+\lambda_4+\lambda_5 \equiv \ell-1 \pmod{2}$, then since $\Lambda \equiv 0 \pmod{2}$, we have $\lambda_6 +\lambda_7+\lambda_8 +\lambda_9 \equiv \ell-1\pmod{2}$ also.  But then the condition $\ell+2 \leq \lambda_6 +\lambda_7+\lambda_8 +\lambda_9$ implies that $\ell+3 \leq \lambda_6 +\lambda_7+\lambda_8 +\lambda_9$. So we may unify these two cases and simply write $\ell+2 \leq \lambda_6 +\lambda_7+\lambda_8 +\lambda_9$ for $(\mathcal{I}\mbox{-}4)$.

Thus we obtain the ten inequalities given in $\mathcal{I}$.  
\end{proof}

%\begin{lemma}
%Let $D=D(\sL_2,\ell,\vec{\lambda})$.  Suppose $D \cdot F_{\{i,j,k\}^{c},\{i\},\{j\},\{k\}} >0$.  Then $i + j+k \geq \ell+1$.
%\end{lemma}
%\begin{proof}
%Recall Fakhruddin's formula for intersection numbers with F-curves, which is printed in equation (\ref{Fakh Fcurve formula}) above, and apply it to this curve.
%The two point fusion rules say that for $j=2,3,4$ the ranks $r_{I_j \mu_j^*}$ are nonzero only when $\mu_{2} = \lambda_{i}$, $\mu_{3} = \lambda_{j}$, $\mu_{4} = \lambda_{k}$.  Thus, since $D \cdot F_{\{i,j,k\}^{c},\{i\},\{j\},\{k\}}  >   0$ there exists $\mu_1 \in
%P_{\ell}$ such that $\deg \mathbb{V}_{\mu_1,
%  \lambda_{i},\lambda_{j},\lambda_{k}} > 0 $.  
%The condition $\deg \mathbb{V}_{\mu_1,
%  \lambda_{i},\lambda_{j},\lambda_{k}} > 0 $ requires that
%$\mu_1+\lambda_i+\lambda_j+\lambda_k$ be even (so that $r_{\mu_1,
%  \lambda_{i},\lambda_{j},\lambda_{k}} > 0$) and
%$\mu_1+\lambda_i+\lambda_j+\lambda_k > 2\ell$ (by Lemma \ref{boris
%  lemma}), so we have $\mu_1+\lambda_i+\lambda_j+\lambda_k \geq
%2\ell+2$.  But $r_{\mu_1,  \lambda_{i},\lambda_{j},\lambda_{k}} > 0$ also requires $\mu_1 \leq \lambda_i+\lambda_j+\lambda_k$ by Lemma \ref{nonzero rank lemma}.  Combining these inequalities yields the desired result.
%
%
%\end{proof}

\begin{proposition} \label{main prop}
Suppose $\vec{\lambda}$ satisfies 
\begin{enumerate}
\item[i.] $\lambda_i \geq 1$ for $1\leq i \leq 5$
\item[ii.] $\lambda_i \leq \ell-1$ for $2 \leq i \leq 9$
\item[iii.] $2 \leq \lambda_i$ for $6 \leq i \leq 9$
\item[iv.] $\lambda_7 \geq \lambda_8 \geq \lambda_9$
\item[v.] $r_{\vec{\lambda}} \neq 0$.
\end{enumerate}
Write $D = D(\sL_2,\ell,\vec{\lambda})$.  Suppose that 
\begin{eqnarray*}
D \cdot F_{\{1,2,3,4,5,6\},\{7\},\{8\},\{9\}} & > &  0\\
%D \cdot F_{\{3,4,5,6,8,9\},\{1\},\{2\},\{7\}} & > &  0\\
%D \cdot F_{\{1,2,5,6,8,9\},\{3\},\{4\},\{7\}} & > &  0\\
D \cdot F_{\{1,2,3,4,8,9\},\{5\},\{6\},\{7\}} & > &  0
\end{eqnarray*}
Then $D \cdot F_{\{1,2,3,4,5\},\{6\},\{7\},\{8,9\}} > 0$.
\end{proposition}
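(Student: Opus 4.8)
The plan is to apply Lemma \ref{I lemma}. Under hypotheses i--v, that lemma identifies the statement $D(\vec{\lambda})\cdot F_{\{1,2,3,4,5\},\{6\},\{7\},\{8,9\}} > 0$ with the assertion that $\vec{\lambda}$ satisfies the ten inequalities $(\mathcal{I}\mbox{-}1)$--$(\mathcal{I}\mbox{-}10)$ of Definition \ref{def of I}. So it suffices to derive each of these ten inequalities from hypotheses i--v together with the two positivity assumptions $D\cdot F_{\{1,2,3,4,5,6\},\{7\},\{8\},\{9\}} > 0$ and $D\cdot F_{\{1,2,3,4,8,9\},\{5\},\{6\},\{7\}} > 0$.

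The first step is to convert the two positivity hypotheses into explicit systems of linear inequalities in $\vec{\lambda}$ and $\ell$, exactly as in the proof of Lemma \ref{I lemma}. For the F-curve $F_{\{1,2,3,4,5,6\},\{7\},\{8\},\{9\}}$ the blocks $\{7\},\{8\},\{9\}$ are singletons, so by the fusion rules (Proposition \ref{fusion rules}) the auxiliary weights attached to them are forced to equal $\lambda_7,\lambda_8,\lambda_9$; then by Proposition \ref{polytope Q} the positivity is equivalent to the polytope $\mathcal{Q}$ (with parity-adjusted bounds $\mathcal{Q}'$) containing a point with a single free coordinate $\alpha$, and eliminating $\alpha$ by Fourier--Motzkin produces a list of inequalities. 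The controlling constraints are the Rasmussen--Walton inequalities for $r_{\lambda_1\cdots\lambda_6\alpha}\neq 0$ on $\M_{0,7}$, those for $r_{\alpha\lambda_7\lambda_8\lambda_9}\neq 0$, and the degree condition $\alpha+\lambda_7+\lambda_8+\lambda_9\geq 2\ell+2$ from Lemma \ref{boris lemma}. The same recipe applied to $F_{\{1,2,3,4,8,9\},\{5\},\{6\},\{7\}}$ (free coordinate $\alpha'$, big block $\{1,2,3,4,8,9\}$, singletons $\{5\},\{6\},\{7\}$) gives a second list. In addition, $r_{\vec{\lambda}}\neq 0$ supplies, via Lemma \ref{nonzero rank lemma}, the Rasmussen--Walton inequalities for all nine weights, which are available throughout.

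The second step is to check $(\mathcal{I}\mbox{-}1)$--$(\mathcal{I}\mbox{-}10)$ against these lists. Several are immediate or nearly so: $(\mathcal{I}\mbox{-}7)$, i.e.\ $2\ell+2\leq\Lambda$, follows from either positivity hypothesis since a conformal block divisor with $\Lambda\leq 2\ell$ is trivial and $\Lambda$ is even; the ``balancing''-type inequalities $(\mathcal{I}\mbox{-}2),(\mathcal{I}\mbox{-}3),(\mathcal{I}\mbox{-}5),(\mathcal{I}\mbox{-}6)$ should pair off with balancing inequalities coming out of one of the two F-curve systems (where the same subsets of $\{1,\dots,9\}$ already occur, together with the ``$+2$'' contributed by the degree condition), or with Rasmussen--Walton inequalities for $r_{\vec{\lambda}}\neq 0$. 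The substantive cases are $(\mathcal{I}\mbox{-}1),(\mathcal{I}\mbox{-}4),(\mathcal{I}\mbox{-}8),(\mathcal{I}\mbox{-}9),(\mathcal{I}\mbox{-}10)$: I expect each of these to come out by combining two or three of the available inequalities, and some may require a short case split on whether a partial sum (for instance $\lambda_6+\lambda_7+\lambda_8+\lambda_9$ or $\lambda_1+\lambda_2+\lambda_3+\lambda_4$) lies below or above $\ell+1$, in the style of the Pattern~I/Pattern~II derivations in Theorem \ref{n6 Fakh cone equals sl2 cone} and of the interplay between the $(J\mbox{-}k)$ inequalities in Lemma \ref{n6 2211 lemma}. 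As in those arguments and in Lemma \ref{I lemma} itself, the parity conditions should drop out once the eliminations are carried out.

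The main obstacle is the bookkeeping: writing out the full Fourier--Motzkin reductions of the two auxiliary polytopes and then matching their many output inequalities against the ten members of $\mathcal{I}$, while isolating exactly which combinations and which case splits are needed. As with the analogous $n=6$ computations, the cleanest presentation is to perform the eliminations with the aid of a computer and to post the detailed derivations, quoting here only the final implications; if it were to turn out that some member of $\mathcal{I}$ did not follow from the stated hypotheses, the remedy would be to tighten hypotheses i--v or to add a further positivity assumption, but the way the proposition is phrased strongly suggests all ten are derivable.
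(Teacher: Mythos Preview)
Your high-level plan is exactly the paper's: reduce via Lemma~\ref{I lemma} to checking that the two positivity hypotheses force all ten inequalities $(\mathcal{I}\mbox{-}1)$--$(\mathcal{I}\mbox{-}10)$. The difference is in execution. You propose full Fourier--Motzkin on each auxiliary F-curve polytope and possible case splits; the paper does neither. Instead it argues directly from the existence of a witness: for each of the two F-curves of shape $6,1,1,1$, positivity means there is a single integer $\mu_1$ with $\deg\mathbb{V}_{\mu_1,\lambda_a,\lambda_b,\lambda_c}>0$ and $r_{\lambda_{I_1}\mu_1^*}\neq 0$. The degree condition gives one lower bound on $\mu_1$ (namely $\mu_1\geq 2\ell+2-\lambda_a-\lambda_b-\lambda_c$), and each Rasmussen--Walton inequality from Lemma~\ref{nonzero rank lemma} gives one upper bound; pairing the lower bound against a well-chosen upper bound eliminates $\mu_1$ and yields one member of $\mathcal{I}$ directly. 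Concretely, $F_{\{1,2,3,4,8,9\},\{5\},\{6\},\{7\}}$ yields $(\mathcal{I}\mbox{-}1),(\mathcal{I}\mbox{-}2),(\mathcal{I}\mbox{-}3)$ this way, and the further observation $\lambda_5+\lambda_6+\lambda_7\geq\ell+1$ (from $\mu_1\leq\lambda_5+\lambda_6+\lambda_7$) combined with hypothesis~ii gives $(\mathcal{I}\mbox{-}8),(\mathcal{I}\mbox{-}9),(\mathcal{I}\mbox{-}10)$. The other F-curve $F_{\{1,2,3,4,5,6\},\{7\},\{8\},\{9\}}$ gives $(\mathcal{I}\mbox{-}5),(\mathcal{I}\mbox{-}6),(\mathcal{I}\mbox{-}7)$ by the same mechanism, and $\lambda_7+\lambda_8+\lambda_9\geq\ell+1$ together with $\lambda_6\geq 2$ gives $(\mathcal{I}\mbox{-}4)$. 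No case splits, no appeal to $r_{\vec{\lambda}}\neq 0$ beyond what is built into Lemma~\ref{I lemma}.

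So your approach would work, but it is heavier than necessary, and your hedging (``should pair off'', ``may require a case split'', ``if it were to turn out\dots'') is unwarranted: every inequality falls out from a single pairing of one lower and one upper bound on the witness $\mu_1$, and you should simply write those ten pairings down.
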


\textit{Remark.}  We will see in the proof that there is nothing canonical about the set of F-curves used above.  Many other sets of F-curves of shape $6,1,1,1$ would force an F-curve of shape $5,2,1,1$ to have nonzero intersection with $D$.

\begin{proof}
We check the ten inequalities defining  $\mathcal{I}$.  

%\textit{Inequality $(\mathcal{I}\mbox{-}1)$:}  $D \cdot
%F_{\{1,2,3,4,8,9\},\{5\},\{6\},\{7\}} >   0 \Longrightarrow \lambda_1+\lambda_2
%+ \lambda_3 +\lambda_4  +\lambda_8 +\lambda_9 \leq 4\ell-2 +
%\lambda_5 +\lambda_6 +\lambda_7$.

First we study consequences of $D \cdot F_{\{1,2,3,4,8,9\},\{5\},\{6\},\{7\}} >0$.  Recall Fakhruddin's formula for intersection numbers with F-curves, which is printed in equation (\ref{Fakh Fcurve formula}) above, and apply this to the curve $F_{\{1,2,3,4,8,9\},\{5\},\{6\},\{7\}}$.  The two point fusion rules say that for $j=2,3,4$ the ranks $r_{I_j \mu_j^*}$ are nonzero only when $\mu_{2} = \lambda_{5}$, $\mu_{3} = \lambda_{6}$, $\mu_{4} = \lambda_{7}$.  Thus, since $D \cdot
F_{\{1,2,3,4,8,9\},\{5\},\{6\},\{7\}}  >   0$ there exists $\mu_1 \in
P_{\ell}$ such that $\deg \mathbb{V}_{\mu_1,
  \lambda_{5},\lambda_{6},\lambda_{7}} > 0 $ and
$r_{\lambda_{1}\lambda_{2}
  \lambda_{3}\lambda_{4}\lambda_{8}\lambda_{9} \mu_1^{*} } \neq 0$.
The condition $\deg \mathbb{V}_{\mu_1,
  \lambda_{5},\lambda_{6},\lambda_{7}} > 0 $ requires that
$\mu_1+\lambda_5+\lambda_6+\lambda_7$ be even (so that $r_{\mu_1,
  \lambda_{5},\lambda_{6},\lambda_{7}} > 0$) and
$\mu_1+\lambda_5+\lambda_6+\lambda_7 > 2\ell$ (by Lemma \ref{boris
  lemma}), so we have $\mu_1+\lambda_5+\lambda_6+\lambda_7 \geq
2\ell+2$.

The condition $r_{\lambda_{1}\lambda_{2}
  \lambda_{3}\lambda_{4}\lambda_{8}\lambda_{9} \mu_1^{*} } \neq 0$
implies that $\mu_1 \leq 6\ell -\lambda_{1} - \lambda_2 -\lambda_3-\lambda_4-\lambda_8 -\lambda_9$ using Lemma \ref{nonzero rank lemma} with $I = \emptyset$.  Combining this with $2\ell+2-\lambda_5-\lambda_6-\lambda_7 \leq \mu_1$ yields the inequality $(\mathcal{I}\mbox{-}1)$:
\begin{align*}
2\ell+2-\lambda_5-\lambda_6-\lambda_7 \leq \mu_1 \leq 6\ell
-\lambda_{1} - \lambda_2 -\lambda_3-\lambda_4-\lambda_8 -\lambda_9\\
\lambda_{1} + \lambda_2+ \lambda_3 +\lambda_4 +\lambda_8 +\lambda_9 \leq 4\ell-2 +
\lambda_5 +\lambda_6 +\lambda_7.
\end{align*}

The condition $r_{\lambda_{1}\lambda_{2}  \lambda_{3}\lambda_{4}\lambda_{8}\lambda_{9} \mu_1^{*} } \neq 0$
implies that $\mu_1 \leq 4\ell -\lambda_{1} - \lambda_2 -\lambda_8 -\lambda_9 +\lambda_3+\lambda_4$ using Lemma \ref{nonzero rank lemma} with $I = \{3,4\}$.  Combining this with $2\ell+2-\lambda_5-\lambda_6-\lambda_7 \leq \mu_1$ yields the inequality $(\mathcal{I}\mbox{-}2)$.

The condition $r_{\lambda_{1}\lambda_{2}  \lambda_{3}\lambda_{4}\lambda_{8}\lambda_{9} \mu_1^{*} } \neq 0$
implies that $\mu_1 \leq 2\ell -\lambda_8 -\lambda_9 +\lambda_{1} + \lambda_2 +\lambda_3+\lambda_4$ using Lemma \ref{nonzero rank lemma} with $I = \{1,2,3,4\}$.  Combining this with $2\ell+2-\lambda_5-\lambda_6-\lambda_7 \leq \mu_1$ yields the inequality $(\mathcal{I}\mbox{-}3)$.

We now proceed to check inequalities $(\mathcal{I}\mbox{-}8)$, $(\mathcal{I}\mbox{-}9)$, and $(\mathcal{I}\mbox{-}10)$.  %Recall that these are parity dependent; however, in all three of these inequalities, our hypotheses are strong enough to show that the greater lower bound is satisfied. 
 We argued above that since $\deg \mathbb{V}_{\mu_1,\lambda_5,\lambda_6,\lambda_7} >0$ we have $\mu_1+\lambda_5+\lambda_6+\lambda_7 \geq 2\ell+2$.  But $\deg \mathbb{V}_{\mu_1,\lambda_5,\lambda_6,\lambda_7} >0$ also requires $r_{\mu_1,\lambda_5,\lambda_6,\lambda_7} >0$, and this gives $\mu_1 \leq \lambda_5+\lambda_6+\lambda_7$ by Lemma \ref{nonzero rank lemma} with $I=\{2,3,4\}$.  Combining these inequalities, we obtain $\lambda_5+\lambda_6+\lambda_7 \geq \ell+1$.  We also apply the hypothesis that $\lambda_i \leq \ell-1$ for $i \geq 2$.  The desired inequalities then follow.

Next we study consequences of $D \cdot F_{\{1,2,3,4,8,9\},\{5\},\{6\},\{7\}} >0$.  By an argument similar to that used above, we get 
$\mu_1+\lambda_7+\lambda_8+\lambda_9 \geq 2\ell+2$.  We also have $r_{\lambda_{1} \lambda_{2}\lambda_{3}\lambda_{4}\lambda_{5}\lambda_{6} \mu_1^{*} } \neq 0$, and by  Lemma \ref{nonzero rank lemma} with $I = \{5,6\}$ this implies $\mu_1 \leq 4 \ell -\lambda_1 -\lambda_2 -\lambda_3-\lambda_4 +\lambda_5 + \lambda_6$. Combining these inequalities yields  inequality $(\mathcal{I}\mbox{-}5)$.  

The condition $r_{\lambda_{1} \lambda_{2}\lambda_{3}\lambda_{4}\lambda_{5}\lambda_{6} \mu_1^{*} } \neq 0$ implies that $\mu_1 \leq 2\ell -\lambda_1 -\lambda_2 +\lambda_{3} + \lambda_4 +\lambda_5+\lambda_6$ using Lemma \ref{nonzero rank lemma} with $I=\{3,4,5,6\}$.  Combining this with $\mu_1+\lambda_7+\lambda_8+\lambda_9 \geq 2\ell+2$ yields   the inequality $(\mathcal{I}\mbox{-}6)$.

The condition $r_{\lambda_{1} \lambda_{2}\lambda_{3}\lambda_{4}\lambda_{5}\lambda_{6} \mu_1^{*} } \neq 0$ implies that $\mu_1 \leq  \lambda_1 +\lambda_2 +\lambda_{3} + \lambda_4 +\lambda_5+\lambda_6$ using Lemma \ref{nonzero rank lemma} with $I=\{1,2,3,4,5,6\}$.  Combining this with $\mu_1+\lambda_7+\lambda_8+\lambda_9 \geq 2\ell+2$ yields   the inequality $(\mathcal{I}\mbox{-}7)$.

It remains only to check the inequality $(\mathcal{I}\mbox{-}4)$.  %Note that Inequality $(\mathcal{I}\mbox{-}4)$ as stated is parity dependent.  However, our hypotheses are strong enough to simply prove the stronger bound.   
The proof begins with an argument that similar to that used to prove the first inequality.  We get $\mu_1+\lambda_7+\lambda_8+\lambda_9 \geq 2\ell+2$.  The condition $\deg \mathbb{V}_{\vec{\mu}} > 0$ also requires $r_{\vec{\mu}} > 0$, and by  Lemma \ref{nonzero rank lemma} with $I = \{7,8,9\}$ this implies $\mu_1 \leq  \lambda_7 +\lambda_8 +\lambda_9$.  Combining these inequalities yields $\ell+1 \leq \lambda_7 +\lambda_8 +\lambda_9$.  Combining this with $\lambda_6 \geq 2$ yields the desired result.

We have verified all ten inequalities comprising the system of inequalities $\mathcal{I}$.  Thus, we may apply Lemma \ref{I lemma} to conclude that $D \cdot F_{\{1,2,3,4,5\},\{6\},\{7\},\{8,9\}} > 0$.

\end{proof}

\begin{lemma} \label{one ell lemma} Suppose $D=D(\sL_2,\ell, \vec{\lambda})$ is an $S_n$-symmetric conformal block divisor.
\begin{enumerate}
\item If $\lambda_i=\lambda_j=\ell$ for some $i\neq j$, then $D
  \cdot F_{I_1,I_2,I_3,I_4} = 0$ for any partition with $I_4 =\{\lambda_i,\lambda_j\}$.
\item Suppose $\ell \geq 4$. If $\lambda_i=\lambda_j=\lambda_k = 1$ where
  $i,j,k$ are pairwise distinct, then \\$D
  \cdot F_{\{ijk\}^{c},\{i\},\{j\},\{k\}} = 0$. 
\end{enumerate}
\end{lemma}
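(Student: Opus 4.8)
The plan is to prove both parts by feeding the relevant F-curve into Fakhruddin's intersection formula (\ref{Fakh Fcurve formula}) and showing that every summand on the right-hand side is forced to vanish. The argument in each case is purely local to the F-curve, so the $S_n$-symmetry hypothesis is not actually needed for the stated conclusions; it only records the setting in which the lemma is applied.

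For part (1), I would write $I_4 = \{i,j\}$ with $\lambda_i = \lambda_j = \ell$ and look at the factor $r_{\lambda_{I_4}\mu_4^{*}} = r_{\ell\,\ell\,\mu_4}$ appearing in (\ref{Fakh Fcurve formula}) (here $*$ is trivial for $\sL_2$). The three-point fusion rules, Proposition \ref{fusion rules}(3), give that this rank is nonzero only when $\mu_4 \le \min\{2\ell,\;2\ell-\ell-\ell\} = 0$, so $\mu_4 = 0$ in any nonvanishing summand. But if $\mu_4 = 0$ then propagation of conformal blocks shows $\mathbb{V}_{\mu_1\mu_2\mu_3\mu_4}$ is pulled back from $\M_{0,3}$, hence is a trivial bundle and has degree $0$. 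So $\deg \mathbb{V}_{\vec{\mu}} = 0$ in every term, and $D \cdot F_{I_1,I_2,I_3,I_4} = 0$.

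For part (2), I would take the partition $\{ijk\}^{c}, \{i\}, \{j\}, \{k\}$ with $\lambda_i = \lambda_j = \lambda_k = 1$. Because the last three blocks are singletons, the two-point fusion rules, Proposition \ref{fusion rules}(2), force $\mu_2 = \mu_3 = \mu_4 = 1$ in any surviving summand of (\ref{Fakh Fcurve formula}); so the only potentially nonzero contributions come from $\vec{\mu} = (\mu_1, 1, 1, 1)$, for any $\mu_1 \in P_{\ell}$. By Lemma \ref{boris lemma} (equation (\ref{deg4})), $\deg \mathbb{V}_{\mu_1,1,1,1}$ is nonzero only if $\mu_1 + 1 + 1 + 1 - 2\ell > 0$, i.e.\ $\mu_1 \ge 2\ell - 2$. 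Since $\mu_1$ is a level-$\ell$ weight we have $\mu_1 \le \ell$, and $2\ell - 2 > \ell$ whenever $\ell \ge 3$, so the hypothesis $\ell \ge 4$ rules this out. Hence $\deg \mathbb{V}_{\mu_1,1,1,1} = 0$ for every $\mu_1 \in P_{\ell}$ (so in particular the value of the other factor $r_{\lambda_{I_1}\mu_1^{*}}$ is irrelevant), and $D \cdot F_{\{ijk\}^{c},\{i\},\{j\},\{k\}} = 0$.

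I do not expect any real obstacle. The only delicate points are extracting the right fusion-rule inequality in part (1) — the upper bound $\min\{\lambda_1+\lambda_2,\,2\ell-\lambda_1-\lambda_2\}$ collapses to $0$ precisely because $\lambda_1 = \lambda_2 = \ell$ — and checking the elementary inequality $2\ell - 2 > \ell$ in part (2). As a sanity check I would also confirm that the $d = 0$ specialization of (\ref{deg4}) is consistent with propagation, since part (1) could equally well be finished from (\ref{deg4}) directly rather than from propagation of conformal blocks.
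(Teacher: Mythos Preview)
Your proof is correct and follows essentially the same approach as the paper's. The only minor difference is in part (2): the paper first restricts $\mu_1 \in \{1,3\}$ via the Rasmussen--Walton inequalities (Lemma \ref{nonzero rank lemma}) before invoking Lemma \ref{boris lemma}, whereas you bypass that step and apply the degree formula (\ref{deg4}) directly to rule out all $\mu_1 \le \ell$ at once---a slight streamlining that also makes it transparent that $\ell \ge 3$ already suffices.
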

\begin{proof}
For the first statement: combining Fakhruddin's intersection number formula
(\ref{Fakh Fcurve formula}) and the three point fusion rules, we see that if $I_4
=\{\lambda_i,\lambda_j\}$, then $ r_{\lambda_{I_4} \mu_{4}^{*}} = 0$
unless $\mu_4=0$.  But then $\mathbb{V}_{\vec{\mu}} $ is a
pullback from $\M_{0,3} \cong pt$, and hence trivial.  Thus every term
in the intersection number formula is zero.

For the second statement: in the intersection number formula printed above,
we see that to have $r_{\lambda_{I_2} \mu_{2}^{*}}  r_{\lambda_{I_3}
  \mu_{3}^{*}}  r_{\lambda_{I_4} \mu_{4}^{*}} \neq 0$ we must have
$\vec{\mu} = (\mu_1,1,1,1)$.  However, by Lemma \ref{nonzero rank
  lemma}, then to have $r_{\vec{\mu}} \neq 0$, we must have $\mu_1 \in
\{1,3\}$; but then by Lemma \ref{boris lemma} $\mathbb{V}_{\vec{\mu}}
=0$ if $\ell \geq 4$. 
\end{proof}

\begin{theorem} \label{no 112}
The divisor $E:=B_2+B_3+2B_4$ is not a multiple of a symmetrized conformal block divisor for $\mathfrak{g}=\sL_2$.
\end{theorem}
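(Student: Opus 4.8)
The plan is to argue by contradiction, pushing everything down to Proposition \ref{main prop}. So suppose $E:=B_2+B_3+2B_4$ is proportional to $\Sym D(\sL_2,\ell,\vec{\lambda})$ for some level $\ell$ and weight vector $\vec{\lambda}$. By Lemma \ref{112 extremal in nef cone}, $E$ spans the east extremal ray of the quadrilateral $\Nef(\M_{0,9})^{S_9}$ depicted in the introduction; that ray lies on the two facets $D\cdot F_{5,2,1,1}=0$ and $D\cdot F_{4,2,2,1}=0$, and has strictly positive intersection with every F-curve of shape $6,1,1,1$ (a single finite check, already implicit in the computation proving Lemma \ref{112 extremal in nef cone}). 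Writing $\Sym D=\sum_{\sigma\in S_9}\sigma^{*}D(\sL_2,\ell,\vec{\lambda})$, each summand is nef (\cite{Fakh}) hence nonnegative on every F-curve, so $E\cdot F_{5,2,1,1}=0$ forces $D(\sL_2,\ell,\sigma\vec{\lambda})\cdot C=0$ for every $\sigma$ and every shape-$5,2,1,1$ curve $C$; in particular, for every ordering of the weight multiset,
\[
D(\sL_2,\ell,\vec{\lambda})\cdot F_{\{1,2,3,4,5\},\{6\},\{7\},\{8,9\}}=0,
\]
a property I will call $(\star)$. Likewise $E\cdot F_{6,1,1,1}>0$ forces one summand to be positive, so after permuting the entries of $\vec{\lambda}$ (which does not change $\Sym D$) we may assume $D:=D(\sL_2,\ell,\vec{\lambda})$ satisfies $D\cdot F_{\{1,\dots,6\},\{7\},\{8\},\{9\}}>0$.

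The heart of the argument is then to arrange that Proposition \ref{main prop} applies. Its hypotheses require $r_{\vec{\lambda}}\neq0$ (automatic, since $D\not\equiv0$), $\lambda_i\geq1$ for all $i$ (automatic by Lemma \ref{lambda_i > 0}), at most one weight equal to $\ell$, and at least four weights $\geq2$, with $\lambda_7\geq\lambda_8\geq\lambda_9$. Permuting within the big block $\{1,\dots,6\}$ and within the singletons $\{7,8,9\}$ preserves the positivity $D\cdot F_{\{1,\dots,6\},\{7\},\{8\},\{9\}}>0$, so the plan is to use this freedom to: (a) put a weight equal to $\ell$ (if any) into position $1$; (b) put four weights $\geq2$ into positions $6,7,8,9$ in nonincreasing order; and (c) use what is left over to force the second curve $F_{\{1,2,3,4,8,9\},\{5\},\{6\},\{7\}}$ to have positive intersection with $D$ as well. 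Step (c) is a direct calculation with Fakhruddin's F-curve formula (\ref{Fakh Fcurve formula}), Lemmas \ref{boris lemma} and \ref{nonzero rank lemma}, and the polytopes $\mathcal{Q}$ of Proposition \ref{polytope Q}, exploiting the vanishing $(\star)$ of every shape-$5,2,1,1$ intersection; it is precisely the flexibility in the choice of F-curves advertised in the Remark following Proposition \ref{main prop}. The weight configurations that obstruct steps (a)–(b) — two or more weights equal to $\ell$, or too few weights $\geq2$ — are treated separately: Lemma \ref{one ell lemma}(1) shows that two weights equal to $\ell$ make $D$ vanish on F-curves of shapes $5,2,1,1$, $4,2,2,1$ and $3,2,2,2$ that place those two weights in one part, which, combined with $E\cdot F_{6,1,1,1}>0$, $E\cdot F_{3,2,2,2}>0$, and $S_9$-symmetry of $E$, is incompatible with $E$ spanning the east ray; Lemma \ref{one ell lemma}(2) controls the case of many weights equal to $1$ when $\ell\geq4$, and the finitely many remaining small levels $\ell\leq3$ (indeed $\ell\leq12$) are covered by the exhaustive search already carried out for $n=9$.

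With hypotheses (i)–(v) in force and both $D\cdot F_{\{1,\dots,6\},\{7\},\{8\},\{9\}}>0$ and $D\cdot F_{\{1,2,3,4,8,9\},\{5\},\{6\},\{7\}}>0$ in hand, Proposition \ref{main prop} yields $D\cdot F_{\{1,2,3,4,5\},\{6\},\{7\},\{8,9\}}>0$, contradicting $(\star)$. Hence no such $\vec{\lambda}$ exists, proving the theorem. I expect the main obstacle to be the middle step: coordinating a single ordering of the weight multiset so that the two shape-$6,1,1,1$ F-curves demanded by Proposition \ref{main prop} are simultaneously positive while the Proposition's hypotheses hold, together with the bookkeeping needed to dispose of the degenerate weight patterns via Lemma \ref{one ell lemma}. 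Everything else is either a finite computation or a direct appeal to results already established above.
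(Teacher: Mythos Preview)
You have missed the single observation that makes the paper's proof short, and without it your step (c) is a genuine gap rather than a ``direct calculation.'' Lemma \ref{112 extremal in nef cone} says that $E$ is extremal in the \emph{full} nef cone $\Nef(\M_{0,9})$, not merely in the $S_9$-invariant slice. An extremal ray of a cone has the property that any decomposition of a point on it as a sum of cone elements forces each summand onto the ray. Since $E=c\sum_\sigma \sigma^*D(\sL_2,\ell,\vec{\lambda})$ and every $\sigma^*D$ is nef, each $\sigma^*D$ is already a multiple of $E$. In particular $D:=D(\sL_2,\ell,\vec{\lambda})$ itself is proportional to $E$, so $D$ has positive intersection with \emph{every} F-curve of shape $6,1,1,1$ and $3,2,2,2$, and zero intersection with every F-curve of shape $5,2,1,1$. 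With this in hand, the two positive $6,1,1,1$ intersections required by Proposition \ref{main prop} are automatic for any ordering of the weights, and the degenerate cases fall out immediately: two weights equal to $\ell$ would kill a $3,2,2,2$ curve (Lemma \ref{one ell lemma}(1)), three weights equal to $1$ would kill a $6,1,1,1$ curve (Lemma \ref{one ell lemma}(2)), both contradicting $D\propto E$.

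By contrast, your argument only extracts from $E\cdot F_{6,1,1,1}>0$ that \emph{some} summand is positive on \emph{some} $6,1,1,1$ curve. Arranging a single ordering of $\vec{\lambda}$ for which both $F_{\{1,\ldots,6\},\{7\},\{8\},\{9\}}$ and $F_{\{1,2,3,4,8,9\},\{5\},\{6\},\{7\}}$ are positive is exactly what you call ``the main obstacle,'' and you do not actually carry it out. Your treatment of the degenerate weight patterns has the same problem: Lemma \ref{one ell lemma} tells you $D$ vanishes on a \emph{particular} curve, but without $D\propto E$ you cannot conclude this contradicts anything, since you only know the \emph{symmetrized} intersection number is positive. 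The fix is one sentence: use the full-cone extremality from Lemma \ref{112 extremal in nef cone} at the outset, and your steps (a)--(c) and the degenerate-case bookkeeping all become trivial.
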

\begin{proof}
By Lemma \ref{112 extremal in nef cone}, we know that $E$ is extremal in the nef cone of $\M_{0,9}$.  Hence, if it is a multiple of the symmetrized conformal block $\sum_{\sigma \in S_n} D(\sigma \vec{\lambda}$), it is already a multiple of $D(\vec{\lambda})$.  Thus, it is enough to show that $E$ is not a multiple of $D(\vec{\lambda})$ for any $\vec{\lambda}$.  

We can compute all the conformal blocks for $n=9$ points,
$\mathfrak{g}=\sL_2$, and $1 \leq \ell \leq 3$; none of
these give $E$.  Thus, we may assume that $\ell \geq 4$.  

We may also assume that $\lambda_i \leq \ell-1$ for $i \geq 2$, and $\lambda_i \geq 2$ for $i \geq 6$, as we now explain.  
We know that $E \cdot F_{I_1,I_2,I_3,I_4} \neq 0$ for any F-curve of shape
$3,2,2,2$; thus, by Lemma \ref{one ell lemma} above, we may assume
that $\lambda_h = \ell$ for at most one $h \in \{1,\ldots,n\}$.  We
also know that $E \cdot F_{I_1,I_2,I_3,I_4} \neq 0$ for any F-curve of shape
$6,1,1,1$; thus, by Lemma \ref{one ell lemma} above, we may assume
that $\lambda_i = \lambda_j= 1$ for at most two values $i,j \in
\{1,\ldots,n\}$.  By permuting the weights if necessary, we may assume
that $h=1$ and $i,j \in \{1,\ldots, 5\}$, and hence that $2 \leq
\lambda_6,\ldots,\lambda_9 \leq \ell-1$.

Reordering the weights in $\vec{\lambda}$ if necessary, we may
furthermore assume that $\lambda_7 \geq \lambda_8 \geq \lambda_9$.

We know $E \cdot F_{I_1,I_2,I_3,I_4}  > 0$ for any F-curve of shape $6,1,1,1$.  Thus, in particular, $D$ has positive intersection with the two curves listed in the statement of Proposition \ref{main prop}.  Applying Proposition \ref{main prop}, we must have $D \cdot F_{\{1,2,3,4,5\},\{6\},\{7\},\{8,9\}} > 0$.  But then $D$ cannot be a multiple of $E$, since $E\cdot F_{I_1,I_2,I_3,I_4}  = 0$ for any F-curve of shape $5,2,1,1$.

\end{proof}

\section{Open questions}

Here are some open questions about conformal block determinants.  

The following question is probably the most important and also the furthest out of reach:
\begin{problem} 
Give an algorithm that does the following:\\
\mbox{} \hspace{1in} \textsc{Input}: a nef divisor $D$\\
\mbox{} \hspace{1in} \textsc{Output}: either ``$D$ is not a conformal block bundle,'' \\
\mbox{} \hspace{1in} or else $(\mathfrak{g}, \ell, \vec{\lambda})$ such that $D$ is a multiple of $c_{1} \mathbb{V}(\mathfrak{g},\ell,\vec{\lambda})$. 
\end{problem}

\begin{problem}
\begin{enumerate}
\item[(ia)] Find the analogue of the Rasmussen-Walton inequalities for general $\mathfrak{g}$; i.e. show that $r_{\vec{\lambda}} \neq 0$ is equivalent to the existence of a lattice point in a polytope.
\item[(ib)]  Show that $r_{\vec{\lambda}}$ counts the number of lattice points in a polytope.
\item[(iia)]  Find the analogue of the Proposition \ref{polytope Q} for general $\mathfrak{g}$; i.e. show that $D(\mathfrak{g},\ell,\vec{\lambda}) \cdot F_{I_1,I_2,I_3,I_4} \neq 0$ is equivalent to the existence of a point of specified parity in a polytope.
\item[(iib)] Does $D(\mathfrak{g},\ell,\vec{\lambda}) \cdot F_{I_1,I_2,I_3,I_4}$ count the number of points of specified parity in a polytope?
\end{enumerate}
\end{problem}
It is well-known that ranks of $\sL_2$ and $\sL_3$ conformal block bundles count the number of lattice points in a polytope \cite{RasmussenWalton}.  (We did not use the full power of Rasmussen and Walton's results in this paper.)  It is widely conjectured that the fusion rules (ranks of conformal block bundles on $\M_{0,3}$) count the number of lattice points in a polytope, but at the time of this writing, this has only proven for $\sL_2$ and $\sL_3$.  With this in hand, it seems likely that one could write down the analogues of the Rasmussen-Walton inequalities for more general $\mathfrak{g}$.  

It also seems likely that one might be able to generalize Proposition \ref{polytope Q} to more general $\mathfrak{g}$ (this is (iia) above).   It's not clear, however, that the resulting polytopes would be sufficiently manageable to extend the results of this paper to other Lie algebras. Finally, (ib) and (iia) together make (iib) a natural question.

\begin{problem}
Find good techniques for showing  $\deg \mathbb{V}_{\mu_1 \mu_2 \mu_3 \mu_4} \neq 0$ for a general $\mathfrak{g}$. 
\end{problem}
To me, it is difficult to see when  Fakhruddin's formula \cite{Fakh}*{Cor. 3.5} for $\deg \mathbb{V}_{\mu_1 \mu_2 \mu_3 \mu_4}$ is nonzero.  A formula with all nonnegative summands is desirable, because then we could stop after finding one positive term.  Alternatively, it seems possible that there might be a combinatorial characterization of when a set of four weights $\vec{\mu}$ gives a trivial conformal block bundle on $\Pro^{1}$.

\begin{problem}
If $D(\sL_2,\ell,\vec{\lambda})$ is $S_n$-symmetric and nontrivial, then is the set of weights $\vec{\lambda}$ also $S_n$-symmetric?  Does a similar statement hold for other $\mathfrak{g}$ if we also take into account the involution $*$?
\end{problem}
If the answer to the above question is yes, then it is possible to give a significantly simpler proof of Theorem \ref{no 112} than that given in Section \ref{dont cover 9 section} above.

\begin{problem}
For a fixed $\mathfrak{g}$, is the cone of conformal block determinants finitely generated?  That is, given $\mathfrak{g}$, does there exist $\ell_{0}$ such that 
\[  \{ D(\mathfrak{g},\ell,\vec{\lambda}) \mid \ell \in \N\} \subseteq \operatorname{ConvHull} \{ mD(\mathfrak{g},\ell,\vec{\lambda}) \mid \ell \in \N, m \in \R_{\geq 0}, \ell \leq \ell_0\} ?
\]

\end{problem}

\newcounter{lastbib}

\section*{References}
\begin{biblist}
	\bib{A}{article}{
	author={Alexeev, Boris},
	title={Ranks and degrees of $sl_2$ conformal blocks},
	note={In preparation},
}

\bib{AS}{article}{
	author={Alexeev, Valery},
author={Swinarski, David},
	title={Nef line bundles on $\overline{M}_{0,n}$ from GIT},
	date={2008},
	note={\netarxiv{0812.0778}},
}

\bib{AGSS}{article}{
author={Arap, Maxim},
author={Gibney, Angela},
author={Stankewicz, Jim},
author={Swinarski, David},
title={Conformal blocks on $\overline{\operatorname{M}}_{0,n}$ from $\sL_n$},
journal={International Mathematics Research Notices; doi: 10.1093/imrn/rnr064},
year={2011},
note={\neturl{http://imrn.oxfordjournals.org/content/early/2011/05/12/imrn.rnr064.full.pdf}},
}

\bib{Beauville}{article}{
   author={Beauville, Arnaud},
   title={Conformal blocks, fusion rules and the Verlinde formula},
   conference={
      title={},
      address={Ramat Gan},
      date={1993},
   },
   book={
      series={Israel Math. Conf. Proc.},
      volume={9},
      publisher={Bar-Ilan Univ.},
      place={Ramat Gan},
   },
   date={1996},
   pages={75--96},
   review={\MR{1360497 (97f:17025)}},
}
\bib{Faber}{article}{
	author={Faber, Carel},
	title={The nef cone of $\overline{M}_{0,6}$: a proof by inequalities only},
	date={2000},
	note={unpublished},
}

\bib{Fakh}{article}{
	author={Fakhruddin, Najmuddin},
	title={Chern classes of conformal blocks on $\overline{M}_{0,n}$},
	date={2009},
	note={\netarxiv{0904.2918}},
}
\bib{GiansiracusaGibney}{unpublished}{
author={Giansiracusa, Noah},
author={Gibney, Angela},
title={The cone of type A, level one conformal blocks divisors},
date={2011},
note={\netarxiv{1105.3139}},
}

\bib{KeelIntersection}{article}{
	author={Keel, Se{\'a}n},
	title={Intersection theory of moduli space of stable $n$-pointed curves
	of genus zero},
	journal={Trans. Amer. Math. Soc.},
	volume={330},
	date={1992},
	number={2},
	pages={545--574},
	issn={0002-9947},
	review={\MR{1034665 (92f:14003)}},
}

\bib{KeelMcKernanContractible}{unpublished}{
author={Keel, Sean},
author={McKernan, James},
title={Contractible Extremal Rays on $\overline{M}_{0,n}$},
year={1996},
note={arXiv:alg-geom/9607009v1 [math.AG]}
}

\bib{RasmussenWalton}{article}{
   author={Rasmussen, J{\o}rgen},
   author={Walton, Mark A.},
   title={Fusion multiplicities as polytope volumes: $\scr N$-point and
   higher-genus ${\rm su}(2)$ fusion},
   journal={Nuclear Phys. B},
   volume={620},
   date={2002},
   number={3},
   pages={537--550},
   issn={0550-3213},
   review={\MR{1877961 (2003b:17032)}},
   doi={10.1016/S0550-3213(01)00543-0},
}
\bib{Ueno}{book}{
   author={Ueno, Kenji},
   title={Conformal field theory with gauge symmetry},
   series={Fields Institute Monographs},
   volume={24},
   publisher={American Mathematical Society},
   place={Providence, RI},
   date={2008},
   pages={viii+168},
   isbn={978-0-8218-4088-7},
   review={\MR{2433154 (2009k:81204)}},
}

\end{biblist}

\section*{Software Packages Referenced}
\begin{biblist}
	\bib{ConformalBlocks}{article}{
		author={Swinarski, David},
		title={\texttt{\upshape ConformalBlocks}: software for
                  computing conformal block divisors in
                  \texttt{\upshape Macaulay2}},
		date={2011},
		note={Version 2.0, \neturltilde{http://www.math.uga.edu/~davids/conformalblocks/}{http://www.math.uga.edu/$\sim$davids/conformalblocks/}}
	}
	\bib{KAC}{article}{
		author={Schellekens, B.},
		title={\texttt{\upshape Kac}: a software system for computing fusion rules of Wess--Zumino--Witten models},
		date={2006},
		note={Version 7.0, \neturltilde{http://www.nikhef.nl/~t58/kac.html}{http://www.nikhef.nl/$\sim$t58/kac.html}}
	}
	\bib{Macaulay2}{article}{
		author={Grayson, Dan},
		author={Stillman, Mike},
		title={\texttt{\upshape Macaulay 2}: a software system for research in algebraic geometry},
		date={2008},
		note={Version 1.1, \neturl{http://www.math.uiuc.edu/Macaulay2/}}
	}
	\bib{NefWiz}{article}{
		author={Gibney, Angela},
		author={Krashen, Daniel},
		title={\texttt{\upshape NefWiz}: software for divisors on the moduli space of curves},
		date={2006},
		note={Version 1.1, \neturltilde{http://www.math.uga.edu/~dkrashen/nefwiz/index.html}{http://www.math.uga.edu/$\sim$dkrashen/nefwiz/index.html}}}
  \bib{polymake}{article}{
		author={Gawrilow, Ewgenij},
		author={Joswig, Michael},
		title={\texttt{\upshape polymake}: a framework for analyzing convex polytopes},
		date={2007},
		note={Version 2.3, \neturl{http://www.math.tu-berlin.de/polymake/}}
	}
\end{biblist}

\end{document}